\DeclareMathAlphabet{\mathdutchcal}{U}{dutchcal}{m}{n}
\SetMathAlphabet{\mathdutchcal}{bold}{U}{dutchcal}{b}{n}
\DeclareMathAlphabet{\mathdutchbcal}{U}{dutchcal}{b}{n}
\newcommand{\wt}{\widetilde}
\newcommand\ad{\mathrm{ad}}
\newcommand{\ra}{\rightarrow}
\newcommand{\hra}{\hookrightarrow}
\newcommand{\ol}{\overline}
\def\coch{\nu}
\def\dcal#1{\mathlarger{\mathdutchcal{#1}}}
\def\Sieg{\dcal{H}}
\newcommand\PROP[1]{($\star$)$_{#1}$ }
\def\FF{\mathbb{F}}
\def\dX{\dcal{X}}
\def\dY{\dcal{Y}}
\def\MT{\mathrm{MT}}
\newcommand{\CC}{{\mathbb{C}}}
\newcommand{\GG}{{\mathbb{G}}}
\newcommand{\HH}{{\mathbb{H}}}
\renewcommand{\AA}{{\mathbb{A}}}
\newcommand{\EE}{{\mathbb{E}}}
\newcommand{\PP}{{\mathbb{P}}}
\newcommand{\QQ}{{\mathbb{Q}}}
\newcommand{\RR}{{\mathbb{R}}}
\newcommand{\bS}{{\mathbb{S}}}
\newcommand{\ZZ}{{\mathbb{Z}}}
\newcommand{\LL}{{\mathbb{L}}}
\newcommand{\NN}{{\mathbb{N}}}
\newcommand{\DD}{{\mathbb{D}}}
\newcommand\dmax{d_{\mathrm{max}}}
\newcommand\mdsp{\mathrm{mdc_{sp}}}
\newcommand\XXX{\mathcal{V}}
\newcommand{\cB}{{\mathcal B}}
\newcommand{\cA}{{\mathcal A}}
\def\ASh{\dcal{A}}
\newcommand{\cM}{{\mathcal M}}
\newcommand{\cJ}{{\mathcal J}}
\def\rG{\mathrm{G}}
\def\rH{\mathrm{H}}
\def\rK{\mathrm{K}}
\def\rZ{\mathrm{Z}}
\def\rT{\mathrm{T}}
\newcommand{\op}{\operatorname}
\newcommand{\Sym}{\op{Sym}}
\newcommand{\Gal}{\op{Gal}}
\newcommand{\Sp}{\op{Sp}}
\newcommand{\GSp}{\op{GSp}}
\newcommand{\SL}{\op{SL}}
\newcommand{\SO}{\op{SO}}
\newcommand{\SU}{\op{SU}}
\newcommand{\GL}{\op{GL}}
\newcommand{\der}{\op{der}}
\newcommand{\Hom}{\op{Hom}}
\newcommand{\Aut}{\op{Aut}}
\newcommand{\Prod}{\op{Prod}}
\newcommand{\Sum}{\op{Sum}}
\newcommand{\Res}{\op{Res}}
\newcommand{\Ad}{\op{Ad}}
\newcommand\codim{\op{codim}}
\newcommand\KK{\mathbb{K}}
\newcommand\LGI{(LGI)}
\newcommand\naI{(naI)}
\newcommand\AnC{(An-C)}
\def\vs{\varsigma} \def\si{\sigma}
\def\GA{r}
\newcommand\Shi{S}
\def\Sh{\dcal{Sh}}
\newcommand\SD{\mathscr{S}} 
\newcommand\Mgct[1][g]{\cM_{#1}^{\rm ct}}
\newcommand\Agind[1][g]{\cA_{#1}^{\rm ind}}
\newcommand\dmc{\op{mdc}}
\newcommand\dmcg{\op{mdc_{vg}}}
\theoremstyle{plain}
\newtheorem{thm}{Theorem}[section]
\newtheorem*{mthm*}{Main Theorem}
\newtheorem{lm}[thm]{Lemma}
\newtheorem{prop}[thm]{Proposition}
\newtheorem{cor}[thm]{Corollary}
\newtheorem{fact}[thm]{Fact}
\newtheorem{qu}[thm]{Question}
\newtheorem{maintheorem}{Theorem}
\newenvironment{mthm}[1]
{
\begin{maintheorem}}
{\end{maintheorem}}
\newtheorem{maincorollary}{Corollary}
\newenvironment{mcor}[1]
{
\begin{maincorollary}}
{\end{maincorollary}}
\theoremstyle{definition}
\newtheorem{ex}[thm]{Example}
\newtheorem{dfx}[thm]{Definition}
\newtheorem*{conv*}{Convention}
\newtheorem*{notation}{Notation}
\newtheorem{rem}[thm]{Remark}
\theoremstyle{remark}
\newtheorem{claim}[thm]{Claim}
\begin{document}

\title[Compact subvarieties of $\cA_{\lowercase{g}}$]{Compact subvarieties of the moduli space of complex {A}belian varieties}

\author[S. Grushevsky]{Samuel Grushevsky}
\address{Department of Mathematics and Simons Center for Geometry and Physics, Stony Brook University, Stony Brook, NY 11794-3651}
\email{sam@math.stonybrook.edu}

\author[G. Mondello]{Gabriele Mondello}
\address{Dipartimento di Matematica ``Guido Castelnuovo'', Sapienza Universit\`a di Roma, Piazzale Aldo Moro 5 -- 00185 Roma, Italy}
\email{mondello@mat.uniroma1.it}

\author[R. Salvati Manni]{Riccardo Salvati Manni}
\address{Dipartimento di Matematica ``Guido Castelnuovo'', Sapienza Universit\`a di Roma, Piazzale Aldo Moro 5 -- 00185 Roma, Italy}
\email{salvati@mat.uniroma1.it}

\author[J. Tsimerman]{Jacob Tsimerman}
\address{Department of Mathematics, University of Toronto, Toronto, Canada}
\email{jacobt@math.toronto.edu}

\begin{abstract}
We determine the maximal dimension of compact subvarieties of 
the moduli space 
$\cA_{g}$
of complex principally polarized abelian varieties of dimension~$g$, and
we characterize all compact subvarieties of maximal dimension.
Moreover we determine the maximal dimension of a compact subvariety through a very general point of
$\cA_{g}$.
These results also allow us to draw some conclusions for compact subvarieties of the moduli space of complex curves of compact type.
\end{abstract}
\thanks{Research of the first author is supported in part by NSF grant DMS-21-01631. The second author is partially supported by INdAM GNSAGA research group,
the second and third authors are partially supported by 
PRIN 2022 research project ``Moduli spaces and special varieties''
and Sapienza research projects ``Algebraic and differential aspects of varieties and moduli spaces'' (2022),
``Aspects of modular forms, moduli problems, applications to L-values'' (2023),
``Global, local and infinitesimal aspects of moduli spaces'' (2024).}

\maketitle

\section{Introduction}\label{sec:intro}

\subsection{Compact subvarieties of maximal dimension}
Given a
complex
irreducible quasi-projective variety~$\XXX$, it is natural to ask how far is~$\XXX$ from being affine or projective. Perhaps one of the most intuitive ways to measure this is the {\em maximal dimension of a compact subvariety of~$\XXX$}, which we denote~$\dmc(\XXX)$.
On the other hand, if one takes $\XXX$ to be the blow-up
of an affine space at the origin,
such $\XXX$ has a compact exceptional divisor, though this large compact subvariety is somehow ``accidental''.
Thus, a perhaps more natural quantity, which in particular is invariant under 
proper birational morphisms,
is the {\em maximal dimension of an 
irreducible
compact subvariety of~$\XXX$ passing through a very general point of $\XXX$}, which we denote by $\dmcg(\XXX)$.

We observe that $0\le \dmcg(\XXX)\le\dmc(\XXX)\le \dim \XXX$.
Clearly $\dmcg(\XXX)=\dmc(\XXX)=0$ if $\XXX$ is affine, while the converse is in general not true. On the other hand, $\dmcg(\XXX)=\dmc(\XXX)=\dim \XXX $ if and only if~$\XXX$ is projective.

Note that $\dmcg(\XXX)$ can also be understood
as the maximum value of $d$ for which there exists
a family of $d$-dimensional 
irreducible
compact subvarieties of $\XXX$ whose total space dominates $\XXX$.

\begin{ex}
If we denote by $\dmc(\XXX,p)$ the maximal dimension
of an irreducible 
compact subvariety of $\XXX$ that contains the point $p\in\XXX$,
then the function $\XXX\ra\NN$,
defined as $p\mapsto \dmc(\XXX,p)$,  
is in general neither upper nor lower semi-continuous.
As an example,
    consider $\pi:\mathrm{Bl}_{\ell}(\CC^3)\ra \CC^3$ the blow-up
    of the complex affine space $\CC^3$ along a line $\ell\subset\CC^3$. Pick $q\in\ell$ and let $\XXX\coloneqq\mathrm{Bl}_{\ell}(\CC^3)\setminus\{\tilde{q}\}$, for some point $\tilde{q}\in\pi^{-1}(q)$.
    We claim that $\dmc(\XXX,p)$ takes value $1$
    on $\pi^{-1}(\ell\setminus\{q\})$ and $0$ elsewhere.
    Note that $\pi^{-1}(\ell\setminus\{q\})$ is neither open nor closed (even in the classical topology).
    
Indeed, since the only irreducible compact subvarieties of $\CC^3$
are points, any irreducible compact subvariety of $\XXX$
through the point $p$ must be contained in $\pi^{-1}(\pi(p))\cap\XXX$.

Then $\dmc(\XXX,p)=1$ if
$p$ belongs to $\pi^{-1}(\ell\setminus\{q\})$, as one can take the whole fiber $\pi^{-1}(\pi(p))\cong\PP^1$ as a compact subvariety. If $p\notin\pi^{-1}(\ell)$, then certainly $\dmc(\XXX,p)=0$, as $\pi^{-1}(\pi(p))=\{p\}$.
Finally if $p\in\pi^{-1}(q)\setminus\{\tilde q\}$, then 
$\pi^{-1}(q)\cap\XXX$  is isomorphic to $\CC$, and so an irreducible compact subvariety
through $p$ must consist only of $\{p\}$,
which again implies $\dmc(\XXX,p)=0$.
\end{ex}

\begin{rem}\label{rem:cover}
    If $f:\XXX'\to\XXX$ is a generically finite,
proper
map of 
irreducible
varieties, then $\dmcg(\XXX')=\dmcg(\XXX)$; and furthermore, if~$f$ is finite surjective, then also $\dmc(\XXX')=\dmc(\XXX)$.
\end{rem}

\begin{rem}\label{rem:cpt}
If $\overline \XXX\supsetneq \XXX$ is a projective compactification of~$\XXX$, such that the boundary $\partial \XXX\coloneqq\overline{\XXX}\setminus\XXX$ has codimension $d$, then by embedding $\overline \XXX\hra \PP^N$ and recursively choosing general hypersurfaces $H_1,\dots,H_{\dim\XXX-d+1}\subsetneq\PP^N$ of sufficiently high degree such that $\dim \left( H_1\cap\dots\cap H_i\cap  \partial \XXX\right)=(\dim\XXX-d)-i$ for any $i$, it follows that $H_1\cap\dots \cap H_{\dim\XXX-d+1}\cap\overline \XXX$ is a compact subvariety of $\XXX$, which moreover can be chosen to pass through any prescribed
finite collection of points of $\XXX$.
Thus the existence of such a compactification~$\overline \XXX$ implies $\dmcg(\XXX)\ge d-1$. We note, however, that in general there is no implication going the other way, as is easily seen by considering $\PP^m\times \CC^n\subsetneq \PP^{m+n}$ for various~$m,n$.
\end{rem}

In this paper we determine 
$\dmcg(\cA_{g})$ and $\dmc(\cA_{g})$ for the moduli space $\cA_{g}$
of complex principally polarized abelian varieties (ppav) of dimension $g$, and we draw some consequences for the moduli of complex curves of compact type, and for the locus of indecomposable abelian varieties. 

\begin{conv*}\label{conv:variety}
Throughout this paper, varieties 
are always assumed to be
reduced, but not necessarily irreducible. 
We will abuse notation and we will use the same symbol
for a reduced algebraic scheme over $\mathrm{Spec}(\CC)$ and for the associated complex variety.
\end{conv*}

\subsection{Compact subvarieties through a very general point of
$\cA_{g}$
}\label{sec:intro-general}

We recall that the moduli space of principally polarized complex
abelian varieties of dimension $g$
is a smooth Deligne-Mumford stack. Its underlying coarse moduli space 
$\cA_{g}$
is an irreducible quasi-projective variety of dimension $\frac{g(g+1)}{2}$.
Its Satake compactification 
$\cA_{g}^*$ of $\cA_{g}$
is an irreducible projective variety
and its boundary 
$\partial\cA^*_{g}=\cA^*_{g}\setminus\cA_{g}$ can be identified with $\cA^*_{g-1}$,
as in fact
it can be naturally stratified as
$\partial\cA^*_{g}\cong \cA_{g-1}\sqcup\cA_{g-2}\sqcup\dots\sqcup \cA_{0}$
(see \cite{satake-compact}).
Thus 
$\partial\cA^*_{g}$ has pure codimension $g$ in $\cA^*_{g}$, 
which by \Cref{rem:cpt} implies $\dmcg(\cA_{g})\ge g-1$.

Our first result is that this bound is sharp (as always, over $\CC$).

\begin{mthm}{A}\label{thm:dmcgAg}
The maximal dimension of a Hodge-generic compact subvariety of 
$\cA_{g}$
is $g-1$. In particular
$\dmcg(
\cA_{g}
)=g-1$.
\end{mthm}

The first claim in \Cref{thm:dmcgAg} is a particular, and slightly easier, case of \Cref{thm:dmcNCShimura}.
The second claim directly follows from the first one.
Indeed, as recalled in \Cref{sec:weakly} below, a subvariety of 
$\cA_{g}$
is {\it{Hodge-generic}} if it 
is not contained in one of the countably many {\it{special}} subvarieties of 
$\cA_{g}$,
different
from $\cA_{g}$ itself.

\subsection{Compact subvarieties of 
$\cA_{g}$
}\label{sec:intro-compact}
Turning to the question of maximal dimension of all compact subvarieties, the best known upper bound for 
$\cA_{g}$
is the 20-year-old result of Keel and Sadun \cite{kesa}, who proved that for any $g\geq 3$
a compact subvariety of 
$\cA_{g}$
has codimension
strictly larger than $g$, namely
$\dmc(
\cA_{g}
)\le \tfrac{g(g-1)}{2}-1$. This was conjectured by Oort, and underscored the difference between the moduli space $\cA_{g}$ of complex ppav (that we work with) 
and $\cA_{g,\mathbb{F}_p}$, as in finite characteristic there does exist a complete subvariety of $\cA_{g,\mathbb{F}_p}$ of codimension exactly~$g$,
namely the locus of ppav
whose subscheme of~$p$-torsion points is supported at zero.

We determine precisely the maximal dimension of compact subvarieties of
$\cA_{g}$, for all genera.

\begin{mthm}{B}\label{thm:dmcAg}
The maximal dimension of a compact subvariety of 
$\cA_{g}$
is
$$
 \dmc(
 \cA_{g}
 )=\max\left(g-1,\left\lfloor \frac{\lfloor g/2\rfloor ^2}{4}\right\rfloor\right)=\begin{cases}
          \ \ g-1\  & \mbox{if $g<16$}\, \\[4pt]
          \ \ \left\lfloor\tfrac{g^2}{16}\right\rfloor & \mbox{for even } g\ge 16\,\\[10pt]
          \left\lfloor\tfrac{(g-1)^2}{16}\right\rfloor & \mbox{for odd } g\ge 17\,.
        \end{cases}
$$
\end{mthm}
In \Cref{table:dimAg} we give our results and the bound of Keel-Sadun, for comparison, in some small genera, and also for $g=100$ to underscore the difference of the growth rates. 
\begin{table}[H]
$
\begin{array}{|lr||rrrr||rrrr||r|}
\hline & g = &3&4&5&6&15&16&17&18&100 \\
  \hline
  \text{Theorem A:} & \dmcg(
 \cA_{g}
 )= &2&3&4&5&14&15&16& 17& 99\\
  \text{Theorem B:} & \dmc(
 \cA_{g}
  )= &2&3&4&5&14&16& 16& 20 & 625\\
  \text{\em Keel-Sadun:} & \dmc(
 \cA_{g}
  )\le &2&5&9&14 &104   &119& 135& 152  &4949\\
 \hline
\end{array}
$

\bigskip

\caption{Maximal dimensions of compact subvarieties of
$\cA_{g}$.}
\label{table:dimAg}
\end{table}

What we actually prove is much stronger: we show that any compact subvariety of
 $\cA_{g}$
 is contained -- up to Hecke translations -- in the image of the product 
$\Shi'\times Z''\subset 
\cA_{g'}\times \cA_{g''}
$,
where 
$\Shi'$ is a compact special subvariety of 
$\cA_{g'}$
and $Z''$ is compact Hodge-generic 
in a non-compact special subvariety of 
$\cA_{g''}$,
with dimension $\dim Z''\le g''-1$, for some non-negative integers $g'+g''=g$
(see \Cref{prop:productShimura} and \Cref{thm:dmcNCShimura}).

We then determine the maximal dimension of compact special subvarieties of
 $\cA_{g}$.
In fact 
our analysis allows us to describe all maximal-dimensional 
irreducible compact subvarieties of 
 $\cA_{g}$;
 they are as follows:
\begin{itemize}
    \item[(a)]
Hodge-generic, for $g\leq 15$ and for $g=17$, or 
\item[(b)]
special subvarieties $\Shi_g\subsetneq
 \cA_{g}
 $ of a specific type (see \Cref{sec:best})
for $g=2$ or $g\geq 16$ even,
or 
\item[(c)]
weakly special subvarieties
that are irreducible components of Hecke translates of the
image of $[E]\times \Shi_{g-1}\subset 
\cA_{1}\times\cA_{g-1}
$ under the standard morphism
$\cA_{1}\times\cA_{g-1}\ra\cA_{g}$,
with
$\Shi_{g-1}\subsetneq
\cA_{g-1}
$ compact special as in subcase (b),
and with any fixed $[E]\in
\cA_{1}
$, for $g\geq 17$ odd. 
\end{itemize}

The existence of two essentially different types of maximal compact subvarieties is reflected in 
the two different growth regimes for $\dmc(
\cA_{g}
)$:
linear in the Hodge-generic case (a),
versus quadratic in the (weakly) special
cases (b) and (c). We give a more precise statement in \Cref{thm:maxvar}.

\begin{rem}
For any finite cover (e.g.~any finite abelian level cover)
$\cA'_{g}\ra\cA_{g}$,
we have
$\dmc(
 \cA_{g}
)=\dmc(
 \cA'_{g}
)$ and $\dmcg(
 \cA_{g}
)=\dmcg(
 \cA'_{g}
)$ by \Cref{rem:cover}. 
It 
also follows from that remark
that $\dmc$ and $\dmcg$ are preserved by finite (surjective) correspondences; as a consequence, our \Cref{thm:dmcgAg} and \Cref{thm:dmcAg}
apply to moduli of complex abelian varieties with any fixed polarization type and with any additional finite level structure. It also follows that, up to taking irreducible components, 
the collection of compact subvarieties of 
 $\cA_{g}$
that achieve
$\dmc(
 \cA_{g}
)$ or $\dmcg(
 \cA_{g}
)$ is stable under Hecke translations.
\end{rem}

\begin{rem}
    It is a natural interesting question to determine all maximal (as opposed to just maximal-dimensional) compact subvarieties of
 $\cA_{g}$,
 that is to determine 
all irreducible
compact subvarieties of 
 $\cA_{g}$
 that are not strictly contained in another 
irreducible
compact subvariety.
This investigation is related to understanding compact subvarieties of maximal dimension inside other non-compact Shimura varieties 
\end{rem}

\subsection{Compact subvarieties of $\Mgct$}\label{intro-Mg}
Recall that the Torelli map sending a smooth complex curve to its Jacobian is an injection 
$\cJ:\cM_{g}\hra \cA_{g}$
of the coarse moduli spaces (and is 2:1 onto its image as a map of stacks, but this does not matter for discussing compact subvarieties). 
Its image is disjoint from the {\it{decomposable locus}}
$\cA_{g}^{\mathrm{dec}}$,
namely the locus of ppav which are
isomorphic to the product of two smaller dimensional ppav,
which is naturally
the union
of the images of the standard morphisms
$\cA_{k}\times\cA_{g-k}\ra\cA_{g}$
for $0<k<g$.
A very interesting problem is to determine 
$\dmc(\Agind)$ and $\dmcg(\Agind)$
of
the locus 
$\Agind\coloneqq\cA_{g}\setminus\cA_{g}^{\mathrm{dec}}$
of indecomposable abelian varieties, 
see \Cref{rem:Agind}.

\begin{rem}
The homology classes of the images  of the standard maps
$\cA_{k}\times\cA_{g-k}\ra\cA_{g}$
for $0<k<g$ were recently studied in \cite{cmop,coptalk,iribarlopez}. It would be interesting to investigate how our approach, described
in \Cref{sec:introA}, relates to this study
(see also \Cref{rem:Agind}).
\end{rem}

\smallskip
The Torelli morphism extends to a proper morphism 
$\cJ:\Mgct\to\cA_{g}$
from the moduli space of curves of compact type, that is from the moduli space of stable nodal curves such that each node is separating. As a corollary of their bound 
$\dmc(\cA_{3})\le 2$ and of a theorem of Diaz~\cite{diaz} stating that $\dmc(\cM_{g})\le g-2$, Keel and Sadun \cite{kesa} deduce the bound $\dmc(\Mgct)\le 2g-4$ for any $g\ge 3$. Our results also have implications for $\Mgct$.

\begin{mcor}{C}\label{cor:dmcMgct}
The following equality and upper bound hold:
$$
\begin{aligned}
\dmc(\Mgct)&=\lfloor 3g/2\rfloor-2 &\hbox{for any $2\le g\le 23$\,,}\\
\dmc(\cJ(\Mgct))&\le g-1 &\hbox{for any $2\le g\le 15$\,.}
 \end{aligned}
$$
Moreover $\dmc(\cJ(\Mgct))\geq \lfloor \frac{2g}{3}\rfloor$ for all $g\geq 2$.

\end{mcor}
The point of the difference between these two quantities is that $\cJ$ is injective at a generic point of $\Mgct$,
but not along 
$\partial\Mgct=\Mgct\setminus\cM_{g}$,
as it sends a stable curve to the product of the Jacobians of its irreducible components, forgetting the locations of the nodes. 

To obtain the first equality, in \Cref{lm:compact-Mgct} we construct for any~$g\ge 3$ a compact subvariety of 
$\Mgct\setminus\cM_{g}$
of dimension $\lfloor 3g/2\rfloor-2$, starting from a compact curve in 
$\Mgct[2]$
(which thus gives $\dmc(\Mgct)\ge \lfloor 3g/2\rfloor -2$ for any~$g\geq 2$).

The second inequality 
in the corollary
implies the bound on the maximal dimension of compact subvarieties of 
$\Mgct$
that intersect 
$\cM_{g}$,
and follows from \Cref{thm:dmcAg} (in fact \Cref{thm:dmcAg} also improves Keel and Sadun's bound $\dmc(\Mgct)\le 2g-4$ to $\dmc(\Mgct)\le \lfloor \lfloor g/2\rfloor^2/4\rfloor$ for $g\le 28$). 

It is tempting to conjecture that in fact $\dmc(\cJ(\Mgct))\le g-1$ for all $g\ge 2$, which would then imply $\dmc(\Mgct)=\lfloor 3g/2\rfloor-2 $ for all $g\ge 2$. Notice that Krichever~\cite{krcomplete} claimed exactly such a bound, but unfortunately his proof had a gap that was never fixed.

\subsection{Idea of the proof of \Cref{thm:dmcgAg}}\label{sec:introA}
We observed at the beginning of \Cref{sec:intro-general} that 
$\dmcg(
\cA_{g}
)\ge g-1$.
Thus it is enough to show that $\dmcg(
\cA_{g}
)<g$.

The inspiration for the argument is the following. 

Let 
$\iota:\cA_{1}\times\cA_{g-1}\ra\cA_{g}$
be the natural product map 
$\iota([E],[A])=[E\times A]$, and let 
$\pi:\cA_{1}\times\cA_{g-1}\ra \cA_{1}$
be the projection onto the first factor.
Observe that $\iota$ is proper and that
the fibers 
$\{[E]\}\times\cA_{g-1}$
of $\pi$
have codimension $g$ in 
$\cA_{g}$.

The essential but very simple remark is that,
if $Z\subsetneq
\cA_{g}
$ is a compact subvariety,
then the properness of $\iota$ implies
that $\iota^{-1}(Z)$ is compact
and so $\pi(\iota^{-1}(Z))$ is a compact subset
of 
$\cA_{1}$,
which thus consists of finitely many points.

Assume now, for contradiction, that $Z\subsetneq
\cA_{g}
$ is a compact subvariety of dimension at least $g$. 
If we could ensure that, for at least one $[E]\in
\cA_{1}
$, the following property holds 
\begin{itemize}
    \item[\PROP{E}]
the intersection of $\pi^{-1}([E])=\{[E]\}\times
\cA_{g-1}
$ with~$Z$ is nonempty and has expected dimension $\dim Z-g\ge 0$, 
\end{itemize}
then property \PROP{E} would hold for all $[E]$ lying in a Zariski open subset of
$\cA_{1}$.
This would mean that $\pi(\iota^{-1}(Z))$ is a nonempty Zariski open
subset of 
$\cA_{1}$,
contradicting the above remark.

While \PROP{E} may not hold for any $[E]\in
\cA_{1}
$ as stated and without further assumptions on $Z$, 
this idea essentially works for Hodge-generic subvarieties $Z$, once the loci of products $\{[E]\}\times
\cA_{g-1}
$ are replaced by their Hecke translates, i.e.~by loci of ppav that are isogenous to such (i.e.~by loci of abelian varieties that surject onto~$E$).

An essential tool we will use for this is 
\Cref{cor:KU}, in which we borrow an idea from
\cite[Theorem 1.9(i)]{khur}, and whose proof relies on the Ax-Schanuel conjecture, proven for
$\cA_{g}$
by 
Pila and the fourth author \cite{pila-tsimerman},
and proven for an arbitrary Shimura variety by Mok, Pila and the fourth author \cite{MPT}.
The details of the proof of \Cref{thm:dmcgAg} can be found in \Cref{sec:reduction}.

\subsection{Structure of the proof of \Cref{thm:dmcAg}}\label{sec:introB}

Here we describe the structure of the dimension
estimates needed to prove \Cref{thm:dmcAg}, which also exploits induction on $g$.

Let $Z\subsetneq
\cA_{g}
$ be an irreducible compact subvariety of largest dimension and let $\Shi\subseteq
\cA_{g}
$ be the smallest
special subvariety that contains $Z$ (see \Cref{thm:specialclosure}).

We distinguish a few cases, according to the compactness
of $\Shi$ or its indecomposability (in the sense of \Cref{dfx:indecomposable}).

\begin{itemize}
\item[(o)]
Suppose that {\it{the tautological family of ppav over $\Shi$ is a product}}
of lower-dimensional ones.

Then $\Shi$ is the image 
under the standard map 
$\cA_{g'}\times\cA_{g-g'}\ra\cA_{g}$
of a special subvariety of some 
$\cA_{g'}\times\cA_{g-g'}$,
and so $\dim Z\le \dmc(
\cA_{g'}
)+\dmc(
\cA_{g-g'}
)$ can be bounded inductively on $g$.
\item[(o')]
Suppose that {\it{the tautological family of ppav over $\Shi$ is isogenous to a product}}
of lower-dimensional families.

Then, for some large $N$, the subvariety $\Shi$ is the image via a morphism 
$\Phi:\cA_{g'}(N)\times\cA_{g-g'}(N)\ra\cA_{g}$
of some special subvariety and the same estimates as in (o) apply.
\item[(i)]
Suppose that {\it{$\Shi$ is non-compact and 
the universal family of ppav over $\Shi$ is not isogenous to a product.
}}

Then we prove that $\dim Z\leq g-1$ 
in \Cref{thm:dmcNCShimura} (which uses Ax-Schanuel for 
Shimura varieties associated to
non-compact special subvarieties of 
$\cA_{g}$
).
\end{itemize}

Note that, if $S$ is not compact,
then the above (o') and (i)
imply that for some large $N$ we have
$Z=\Phi(\Shi'\times Z'')$, where $\Shi'$ is a compact special subvariety of 
$\cA_{g'}$
and $Z''$ is compact Hodge-generic
in a non-compact special subvariety $\Shi''\subseteq
\cA_{g-g'}
$.
Since $\dim Z''=g-g'-1$ by
\Cref{thm:dmcNCShimura} and by maximality, we can write an expression
for $\dmc(
\cA_{g}
)$ in terms
 of the maximal dimension of {\it{compact}} special
 subvarieties of 
 $\cA_{g'}$
 for all $g'\leq g$
 (see \Cref{sec:mdsp}).

Thus, in order to complete the proof,
we are only
left to determine the maximal dimension of a
compact special subvariety of 
$\cA_{g}$,
assuming that this is at least $g-1$.
Note that cases (ii) and (iii) below are independent
of (o), (o') and (i) above.
\begin{itemize}
    \item[(ii)]
    Suppose that {\it{$\Shi$ is compact, and its associated
    symplectic representation is decoupled}}
    (in the sense of \Cref{dfx:decoupled}). In this case $Z$ must be equal to $\Shi$.
    In \Cref{sec:decoupled} we determine those
    $\Shi$ of largest dimension (assuming that
    this is at least $g-1$), and in particular we determine those $\Shi$ 
    that achieve the optimal bound $\dim \Shi=\dmc(
    \cA_{g}
    )$ for $g=2$ and $g\geq 16$.
    Our analysis heavily relies on the fundamental works of Satake
    \cite{satake-compact,satake:analytic}
    recalled in \Cref{sec:Satake}.
Another important ingredient of our estimates
are the Hasse principles
recalled in \Cref{sec:compact}.    
    \item[(iii)]
    Suppose that {\it{$\Shi$ is compact and its associated
    symplectic representation is not decoupled}}.
    In \Cref{sec:non-decoupled} we show through
    numerical estimates that the compact subvarieties $\Shi$,
    of dimension at least $g-1$, appearing in this case have strictly smaller dimension than
    those that appear in the decoupled case (ii) above.
\end{itemize}

Finally, in \Cref{thm:maxvar} we collect
the list of all maximal dimensional compact
subvarieties of 
$\cA_{g}$
for all $g$.

\subsection{Structure of the paper}\label{sec:intro-structure}

In \Cref{sec:Shimura} we recall basic facts on algebraic groups, Shimura varieties, special and weakly special
subvarieties.

In \Cref{sec:special-Ag} we recall
the definition of Siegel Shimura datum 
and define indecomposable special subvarieties of
$\cA_{g}$.
Then we
discuss some properties of
symplectic representations
associated to special subvarieties of
$\cA_{g}$
and finally in \Cref{prop:productShimura} we prove a decomposability result
for embeddings of Shimura data isogenous to products
into Siegel Shimura datum.

In \Cref{sec:ax-schanuel} we recall the statement of Ax-Schanuel for Shimura varieties,
and we use it to prove \Cref{thm:dmcNCShimura}, which is a more general version of \Cref{thm:dmcgAg}, 
as it
deals with Hodge-generic compact subvarieties of any non-compact special subvariety of
$\cA_{g}$,
and not only with Hodge-generic compact subvarieties of
$\cA_{g}$
itself.
This permits us to give a first estimate for the maximal dimension of a compact subvariety of
$\cA_{g}$
(see \Cref{thm:finalestimate}).

In \Cref{sec:compact} we recall some Hasse principles
that will be necessary in the estimates
of \Cref{sec:decoupled}.

In \Cref{sec:Satake} we recall Satake's
classification of special subvarieties of
$\cA_{g}$
associated to decoupled symplectic representations,
that will be used in \Cref{sec:decoupled}.

In \Cref{sec:decoupled},  we investigate compact special subvarieties of
$\cA_{g}$
that are induced by decoupled symplectic representations, and determine which of them have dimension at least $g-1$
(see \Cref{thm:dmcAg2}). 

In \Cref{sec:non-decoupled} we investigate compact special subvarieties of
$\cA_{g}$
that are induced by symplectic representations that are not decoupled. 
We show that such subvarieties have dimension either smaller than~$g-1$,
or smaller than some Shimura variety originating from a decoupled representation as in \Cref{sec:decoupled}.
We thus conclude the proof of \Cref{thm:dmcAg}.

In \Cref{sec:jacobians} we discuss related problems for the moduli space of indecomposable
abelian varieties, and we prove \Cref{cor:dmcMgct} for compact subvarieties of the moduli
space 
$\Mgct$
of curves of compact type.

\subsection{Acknowledgments}
The first author is grateful to Universit\`a Roma ``La Sapienza'' for hospitality in June and October 2023, when part of this work was done. 
We are grateful to Sorin Popescu who endowed the Stony Brook lectures in Algebraic Geometry, which allowed the first and fourth author to meet and think about these topics. We are very grateful to Salim Tayou and Nicolas Tholozan for enlightening discussions and pointers to the literature on these topics.

\section{Algebraic groups and Shimura varieties}\label{sec:Shimura}

In this section we recall some useful notions on algebraic groups, Shimura data, 
and Shimura varieties. We refer to 
\cite{deligne-travaux}, \cite{deligne}, \cite{platonovrapinchuk},
\cite{moonen1998linearityI},
\cite{milne2005}, 
\cite{milne2011shimura}, \cite{milne:ag}, 
\cite{moonen-oort}, \cite{lanexample} for all information relevant to the current paper.

\begin{conv*}
All fields $\KK$ will be of characteristic $0$ and contained in the field $\CC$ of complex numbers.
By {\it{an algebraic group over $\KK$}} we mean an algebraic group scheme $\rG$
over a number field $\KK$ and we denote by $\rG(\KK)$ the set of its $\KK$-points. 
By a {\it{linear representation of $\rG$}} we mean
a morphism of group schemes $\rG\rightarrow \GL(V)$, where $V$ is a $\KK$-vector space.

If $\si:\KK\hra\LL$ is a field embedding, we denote by 
$\rG_{\si,\LL}=\rG\otimes_{\KK,\si}\LL$ the group scheme over $\mathrm{Spec}(\LL)$
obtained from $\rG$ by pull-back via $\si^*:\mathrm{Spec}(\LL)\rightarrow\mathrm{Spec}(\KK)$.
When the embedding $\si$ is understood, we simply write $\rG_{\LL}$.
Similarly, if $V$ is a $\KK$-linear representation of $\rG$,
we denote by $V_{\si,\LL}\coloneqq V\otimes_{\KK,\sigma}\LL$ the induced
representation of $\rG_{\si,\LL}$, or just $V_\LL\coloneqq V\otimes_\KK \LL$ when
$\si$ is understood.
\end{conv*}

\subsection{Algebraic groups}\label{sec:groups}
Let $\rG$ be an algebraic group over a number field $\KK\subset\CC$.
We will assume that $\rG$ is connected (in the Zariski topology),
which is equivalent to requiring that $\rG(\CC)$ is connected in the classical topology.

{\bf{Semisimple and simple groups.}}
The group $\rG$ is {\em semisimple} if every 
connected solvable normal subgroup of $\rG$ is trivial; it is {\em simple} if it is semisimple and every connected normal subgroup of $\rG$, different from $\rG$ itself, is trivial. Moreover, $\rG$ is {\em geometrically simple}  if $\rG_{\CC}$ is simple. 

{\bf{Reductive groups.}}
A group $\rG$ is {\em reductive} if 
every connected unipotent normal subgroup of $G$ is trivial.
The connected component of its center $\rZ(\rG)$ that contains the identity is a torus,
and $\rG$ is semisimple if and only if $\rZ(\rG)$ is finite. 
The {\it{adjoint group}} is defined as $\rG^{\ad}\coloneqq \rG/\rZ(\rG)$
and the {\it{derived subgroup}} $\rG^{\der}$ is the closed subgroup of $\rG$ generated by the commutators.
Then $\rG^{\ad}$ (resp.~$\rG^{\der}$) is semisimple
with trivial (resp.~finite) center, and we say that a connected reductive group $\rG$ is {\em adjoint} if it is semisimple and $\rG=\rG^{\ad}$. 

{\bf{Anisotropic groups.}}
A {\em split torus} $\rT$ is a product of a number of copies of the multiplicative group $\mathbb{G}_m$. A reductive group $\rG$ is called {\em anisotropic} if 
$\rG^{\ad}$ contains no positive-dimensional split tori.
It was shown in \cite[Corollary 8.5]{borel-tits} that a semisimple group over a number field $\KK$ is anisotropic if and only if it contains no nontrivial unipotent elements.
It follows that $\rG$ is anisotropic (as an algebraic $\KK$-group) if and only if $\Res_{\KK/\QQ}\rG$ is (as an algebraic $\QQ$-group).

{\bf{Isogenies.}}
An {\em isogeny} is a surjective homomorphism of connected algebraic groups $\rG'\rightarrow \rG$ with finite kernel; the kernel is then necessarily contained in~$\rZ(\rG')$.
An algebraic group $\rG$ is {\em simply connected} if every isogeny $\rG'\rightarrow \rG$ is an isomorphism.
Note that a reductive and simply connected algebraic group is necessarily semisimple.
For a reductive group $\rG$, let $\wt{\rG}$ denote the universal cover of $\rG^{\ad}$, so that $\wt{\rG}\ra \rG^\ad$ is an isogeny.

{\bf{Simple factors.}}
Let  $\rG$ be semisimple and let~$\rG_{(1)},\dots,\rG_{(\ell)}$ be all the simple closed normal $\KK$-subgroups of $\rG$. Then the product map $\rG_{(1)}\times\dots\times \rG_{(\ell)}\rightarrow \rG$ is an isogeny (see \cite[Theorem 22.121]{milne:ag}),
and we will call $\rG_{(j)}$ a {\it{simple $\KK$-factor}} of $\rG$.
If $\KK=\RR$, then we will say that a real factor $\rG_{(j)}$ is {\it{compact}}
if $\rG_{(j)}(\RR)$ is.

{\bf{Field of definition and splitting field.}}
Let $\rG$ be a semisimple group over $\KK$.
The Galois group $\Gal(\CC/\KK)$ acts on $\rG_{\CC}$ and permutes its simple $\CC$-factors
$\rG'_i$.  
The 
subgroup that sends $\rG'_i$ to itself
can be identified with a finite extension $\LL$ of $\KK$, called the {\it{field of definition
of $\rG'_i$}}; indeed, there exists a $\LL$-group $\rH'$ such that $\rG'_i=\rH'_\CC$.
The {\it{splitting field of $\rG$}} is the subfield of $\CC$ generated by
the fields of definition of all the simple factors of $\rG_\CC$.

{\bf{Restriction of scalars.}}
Let $\rH$ is a geometrically simple group over a totally real number field $\KK$
and $\rG=\rH_{\KK/\QQ}$ is the $\QQ$-group obtained from $\rH$ by {\it{restriction of scalars}}.
Then $\prod_{\si} \rH_{\si,\RR}\rightarrow \rG_\RR$ is an 
isomorphism, 
where $\si$ ranges over all embeddings $\KK\hra \RR$ and $\rH_{\si,\RR}\coloneqq \rH\otimes_{\KK,\si}\RR$.

\begin{dfx}\label{dfx:d(G)}
For $\rG$ a real reductive group with $\rG^{\ad}$ having maximal compact subgroup $\rK$, we define $d(\rG)\coloneqq\frac{1}{2}\left( \dim \rG^{\ad} - \dim \rK \right)$.
\end{dfx}

{\bf{Identity component of real groups.}}
If $\rG$ is a reductive group, we denote by 
$\rG^{\ad}(\RR)^+$ 
the connected component (in the classical topology) of $\rG^{\ad}(\RR)$ that contains the identity, and 
by $\rG(\RR)^+$ the preimage of $\rG^{\ad}(\RR)^+$ under $\rG(\RR)\ra \rG(\RR)^{\ad}$. Moreover,
we denote by $\rG(\QQ)^+$ the intersection
$\rG(\QQ)\cap \rG(\RR)^+$ within $\rG(\RR)$.

{\bf{Galois conjugate representations.}}
Let $\rG$ be an algebraic group over $\KK$ and let $U$ be a representation of  $\rG_{\CC}$.
For every $\vs\in \Gal(\CC/\KK)$, we define another representation $U^{\vs}$ of $\rG_{\CC}$, which will be called {\it{Galois conjugate to $U$}},
by setting
$U^{\vs}\coloneqq U\otimes_{\CC,\vs}\CC$ (which means that $\vs^{-1}(z)u\otimes 1 =u\otimes z$)  and then defining the $\rG_{\CC}$-action on $U^\vs$ as $g\cdot (u\otimes z)\coloneqq (g\cdot u)\otimes z$.

Note that, if $U$ is induced by a $\KK$-representation $W$ of $\rG$, then 
\[
\xymatrix@R=0in{
U=W\otimes_\KK \CC \ar[rr] && U^\vs=(W\otimes_\KK \CC)\otimes_{\CC,\vs}\CC\\
 w\otimes z\ar@{|->}[rr] && (w\otimes 1)\otimes z 
}
\]
is an isomorphism of $\rG_{\CC}$-representations.

{\bf{Galois action and representations defined over $\KK$.}}
Let $\rG$ be a reductive algebraic group over $\KK$ and let
$V$ be a $\KK$-representation of $\rG$.
Let also $U$
be an irreducible 
$\wt{\rG}_{\CC}$-invariant subspace
of $V_{\CC}=V\otimes_{\KK}\CC$.

Consider an element $\vs\in\Gal(\CC/\KK)$.
It acts both
on $\wt{\rG}_{\CC}$ and on $V_{\CC}$
in such a way that $\vs(g)\cdot (v\otimes\vs(z))=
\vs(g\cdot (v\otimes z))$
for $g\in\wt{\rG}_{\CC}$ and $v\otimes z\in V_{\CC}$.
In particular, 
$\vs(U)$ is a $\wt{\rG}_{\CC}$-subrepresentation
of $V_{\CC}$ and
the isomorphism
$s_\vs:U^\si\ra \vs(U)$ of $\CC$-vector spaces,
defined as $s_\vs(u\otimes z)\coloneqq z\vs(u)$,
induces the following commutative diagram
\[
\xymatrix{
\wt{\rG}_{\CC}\times U^\vs\ar[r] \ar[d]_{(\vs,s_\vs)} & U^\vs\ar[d]^{s_\vs}\\
\wt{\rG}_{\CC}\times \vs(U)\ar[r] & \vs(U)
}
\]
Now, $\wt{\rG}_{\CC}$
splits as a product $\wt{\rG}_{1,\CC}\times\dots\times\wt{\rG}_{k,\CC}$ of simple factors
and $U$ is isomorphic to $U_1\otimes\dots\otimes U_k$,
with $U_i$ an irreducible representation of $\wt{\rG}_{i,\CC}$.
Similarly, $U'\coloneqq\vs(U)\cong U'_1\otimes\dots\otimes U'_k$.

Observe that $U^\vs\cong U_1^\vs\otimes\dots\otimes U_k^\vs$ as $\wt{\rG}_{\CC}$-representations,
and that this is isomorphic to $U'_1\otimes\dots\otimes U'_k$
when the action of $\wt{\rG}_{\CC}$ on $U'$ is pre-composed with $\vs$.

Since $\vs$ acts on $\wt{\rG}_{\CC}$ by permuting its simple factors, we denote by $\wt{\rG}_{\vs(i),\CC}$ the factor $\vs(\wt{\rG}_{i,\CC})$. Thus 
$U_i^\vs\cong U'_{\vs(i)}$ as $\wt{\rG}_{i,\CC}$-representations,
where the action of $\wt{\rG}_{i,\CC}$ on $U'_{\vs(i)}$ is
pre-composed with $\vs$.
In particular, $U_i$ and $U'_{\vs(i)}$ have the same dimension,
and $U_i$ symplectic (resp.~orthogonal, or not self-dual)
if and only if $U'_{\vs(i)}$ is.

\subsection{Shimura data}\label{ssc:ShimuraData}
We follow \cite{milne2005} in our definitions of Shimura data and Shimura varieties.

We recall that the {\em{Deligne torus}} is the $\RR$-algebraic group defined as 
$\bS\coloneqq \Res_{\CC/\RR}(\GG_{m,\CC})$.
Note that $\bS(\RR)\cong\CC^\times$
and that
$\bS_\CC\cong \GG_{m,\CC}\times\GG_{m,\CC}$.

\begin{dfx}[{\cite[4.4]{milne2005}}]\label{def:shimura}
A {\em Shimura datum} is a pair $(\rG,\dX)$
where $\rG$ is a reductive algebraic group over $\QQ$,
and $\dX$ is a 
$\rG(\RR)$-conjugacy class
of homomorphisms of real algebraic groups $x:\bS\ra \rG_{\RR}$ 
such that
\begin{itemize}
\item[(i)]
the induced Hodge structure on the Lie algebra 
has
weights
$$\{(-1,1),\ (0,0),\ (1,-1)\},$$ 
i.e.~there is a direct sum decomposition $\mathfrak{g}_{\CC}=\mathfrak{s}\oplus\mathfrak{t}^+\oplus\mathfrak{t}^-$ of the Lie algebra of $\rG_\CC$, such that
$\Ad_{x(z)}$ acts trivially on $\mathfrak{s}$, as $\frac{z}{\bar z}$ on $\mathfrak{t}^+$
and as $\frac{\bar z}{z}$ on $\mathfrak{t}^-$;
\item[(ii)]
the conjugation by $\theta\coloneqq u(i)$ is a Cartan involution of $G^{\ad}_{\RR}$, 
namely $\rG^\theta\coloneqq \{g\in 
\rG^{\ad}(\CC)
\,|\, \theta\bar{g}\theta^{-1}=g\}$ is compact;
\item[(iii)]
$\rG^{\ad}$ has no nontrivial simple $\QQ$-factor $\rG'$
with 
$\rG'(\RR)$
compact.
\end{itemize}
The Shimura datum $(\rG,\dX)$ is said to be {\em{adjoint}} (resp.~{\em{semisimple}}) if $\rG$ is.

A {\em morphism of  Shimura data} $(\rG,\dX)\rightarrow (\rG',\dX')$
is a homomorphism $f:\rG\rightarrow \rG'$ of algebraic groups inducing a map $\dX\ra \dX'$.
Such morphism is said to be an {\em{embedding}} if $f$ is.
\end{dfx}

The {\em{product}} of two Shimura data $(\rG,\dX),(\rG',\dX')$
is the Shimura datum defined as $(\rG\times \rG', \dX\times \dX')$, where $\dX\times \dX'$ is naturally
identified with the 
$\rG(\RR)\times \rG'(\RR)$-conjugacy class
of
homomorphisms $(x,x'):\bS\ra \rG_\RR\times \rG'_\RR$ with $x\in \dX$ and $x'\in \dX'$.

\begin{rem}
Given a Shimura datum $(\rG,\dX)$ as in \Cref{def:shimura},
one can define another Shimura datum $(\rG^{\ad},\dX^{\ad})$ by composing the map $u:\bS\ra \rG_\RR$ with the projection 
$\rG_\RR\ra \rG_\RR^{\ad}$.
Then the natural map $\dX\ra \dX^{\ad}$ 
is finite surjective, and 
maps each component of $\dX$ isomorphically onto a component of $\dX^{\ad}$.
\end{rem}

If $(\rG,\dX)$ is a Shimura datum, then
$\dX$ acquires the structure of 
a disjoint union of finitely many Hermitian symmetric spaces,
that are pairwise conjugate to each other.

\begin{dfx}
The {\em dimension} of a Shimura datum $(\rG,\dX)$ is $\dim(\rG,\dX)\coloneqq\dim_\CC \dX$. 
\end{dfx}

Note that $\dim(\rG,\dX)=d(\rG)$, as introduced in \Cref{dfx:d(G)}.\\

{\bf{Shimura data and simple factors.}}
Let  $(\rG,\dX)$ be a Shimura datum and let $\dX^+\subseteq \dX$ be a connected component. Then $\dX^+$ is a connected Hermitian symmetric space,
and 
$\rG^{\ad}(\RR)^+$ 
acts via conjugation on $\dX^+$ transitively, isometrically, and holomorphically. 
Pick a point $x\in \dX^+$, so that
the group 
$\bS(\RR)=\CC^\times$ 
acts on $\dX^+$ via $\Ad_{\rG}\circ x$, thus fixing $x$. 
If 
$\theta=x(i)\in \rG(\RR)$, 
then the compact subgroup 
$\rG^\theta\cap \rG^{\ad}(\RR)^+$ 
is the identity component of the stabilizer of $x$ inside 
$\rG^{\ad}(\RR)^+$.

Recall that $\rG^{\ad}_{\RR}$ is the product of its simple closed normal subgroups. 
Let 
$\rG^{\ad}_c$ be the product of all compact simple factors of $\rG^{\ad}_\RR$, and $\rG^{\ad}_{nc}=\rG^{\ad}_1\times\dots \times \rG^{\ad}_k$ the product of all its non-compact
simple
factors. Note that we have natural factorizations  $$\dX=\dX_1\times\dots\times \dX_k\qquad\text{and}\qquad \dX^+=\dX^+_1\times\dots\times \dX^+_k.$$

We will often use the following result, whose proof can also be found
in {\cite[Theorem 3.13]{milne2011shimura}}.

\begin{lm}\label{lem:Shimgeomsimplefactors}
   Let $(\rG,\dX)$ be a Shimura datum, and $\rG^{\ad}_i$ an $\RR$-simple
factor of $\rG^{\ad}_{\RR}$. Then $\rG^{\ad}_i$ is geometrically simple.
Hence the splitting field of $\rG$ is 
a totally real number field.
\end{lm} 

\begin{proof}
By contradiction,
suppose $\rG^{\ad}_i$ were not geometrically simple. Since $\rG^{\ad}_{i,\CC}$ is adjoint and not simple, it can be written as a product of simple adjoint groups. Since $\Gal(\CC/\RR)$ acts transitively on the factors,  $\rG^{\ad}_{i,\CC}\cong \rH\times\ol{\rH}=(\Res_{\CC/\RR}\rH)_\CC$, where $\rH$ is a complex simple group
and $\rG^{\ad}_i\cong \Res_{\CC/\RR}\rH$.
Thus $\rG^{\ad}_i$ is non-compact. Moreover
$\rG_{i,\CC}$ has no Cartan involution by \Cref{H_C} below.
This contradicts \Cref{def:shimura}(iii),
thus proving the first claim.
The second claim immediately follows.
\end{proof}
 
\begin{lm}[{\cite[4.4.10]{simpson1992higgs}}]\label{H_C}
Let $\rH$ be a complex group of positive dimension, and let $ \rH_{\CC/\RR}$  be the real group obtained from $\rH$ by the restriction of scalars.
Then there is no element $\gamma$ 
in $\rH_{\CC/\RR}(\CC)$ 
such that
$\Ad_\gamma$ is a Cartan involution.
\end{lm}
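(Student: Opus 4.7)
The plan is to make the complexification $(H_{\CC/\RR})_\CC$ and its associated complex conjugation explicit, and then to show that no inner twist of the latter can have compact fixed set. Set $G:=H_{\CC/\RR}$.

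First, I would recall the standard description of the Weil restriction via the $\CC$-algebra isomorphism $\CC\otimes_\RR\CC\cong\CC\times\CC$ given by $a\otimes b\mapsto (ab,\,a\bar b)$. This yields a canonical identification
\[
G_\CC \;\cong\; H_\CC\times H_\CC
\]
of complex algebraic groups, under which $G(\RR)=H(\CC)$ sits diagonally. Correspondingly, the complex conjugation $\sigma:G_\CC\to G_\CC$ with fixed set $G(\RR)$ is the factor swap $\sigma(g_1,g_2)=(g_2,g_1)$. Getting this identification right, and in particular keeping careful track of $\CC$-structures through it, is the step I expect to take the most care; once it is in place, the rest is formal.

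Next, given $\gamma=(\gamma_1,\gamma_2)\in G_\CC$, I would write out the candidate Cartan involution
\[
\tau_\gamma(g):=\gamma\,\sigma(g)\,\gamma^{-1},\qquad \tau_\gamma(g_1,g_2)=(\gamma_1\, g_2\,\gamma_1^{-1},\;\gamma_2\, g_1\,\gamma_2^{-1}),
\]
and observe that $\tau_\gamma^2=\mathrm{id}$ forces both $\gamma_1\gamma_2$ and $\gamma_2\gamma_1$ to lie in $Z(H_\CC)$. A direct calculation then identifies the fixed subgroup as
\[
G_\CC^{\tau_\gamma}\;=\;\{\,(\gamma_1\, g\,\gamma_1^{-1},\;g)\mid g\in H(\CC)\,\}\;\cong\;H(\CC),
\]
with the isomorphism being projection to the second factor. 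Since $\dim_\RR H(\CC)=2\dim_\CC H=\dim_\CC G_\CC$, this is a genuine real form of $G_\CC$.

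Thus if $\tau_\gamma$ were a Cartan involution, then $H(\CC)$ itself, regarded as a real Lie group, would have to be compact. But for $H$ of positive dimension this fails: the Killing form on $\mathfrak{h}_\RR$ satisfies $B_{\mathfrak{h}_\RR}=2\,\Re B_\mathfrak{h}$, and writing $\mathfrak{h}=\mathfrak{u}\oplus i\mathfrak{u}$ for a compact real form $\mathfrak{u}$ shows that $B_{\mathfrak{h}_\RR}$ is negative definite on $\mathfrak{u}$ but positive definite on $i\mathfrak{u}$, hence indefinite; equivalently, $H(\CC)$ contains the non-compact subgroup $(\CC^\ast)^r$ coming from any maximal torus $\GG_m^r\subset H$ with $r\ge 1$. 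This yields the required contradiction and completes the argument.
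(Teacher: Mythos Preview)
Your proof is correct and follows essentially the same route as the paper's: identify $(H_{\CC/\RR})_\CC$ with $H_\CC\times H_\CC$, compute the fixed subgroup of the twisted conjugation explicitly, and observe that it is isomorphic (as a real Lie group) to $H(\CC)$, which is non-compact. The only differences are cosmetic: the paper uses the convention $\iota(h)=(h,\bar h)$ with conjugation $c(g,g')=(\bar{g'},\bar g)$, while you use the diagonal embedding with the pure swap; and you treat a general $\gamma=(\gamma_1,\gamma_2)\in G_\CC$, whereas the paper's written proof takes $\gamma\in H_{\CC/\RR}$, which is slightly less than the lemma as stated (though all that is needed for the application, since $\theta=u(-1)$ is real).
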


The following well-known corollary will be essential for us:

\begin{cor}\label{cor:totallyrealsplit}
Let $(\rG,\dX)$ be a Shimura datum 
with $\rG$ adjoint (or simply-connected) and $\QQ$-simple. 
Then $\rG\cong \rH_{\KK/\QQ}$ for some totally real field~$\KK$ and some geometrically simple group~$\rH$ over~$\KK$.
\end{cor}

\begin{proof}
    We can write $\rG_\RR=\prod_{i=1}^k \rG_i$, where each $\rG_i$ is geometrically simple
    by \Cref{lem:Shimgeomsimplefactors}.
    Let $\KK$ be the field of definition of $\rG_1$, so that $\rG_1=\rH_\RR$ for some $\KK$-group $\rH$.
    It follows 
from \Cref{lem:Shimgeomsimplefactors}
that $\KK$ is totally real.
    Since $\rG$ is $\QQ$-simple, we obtain $\rG\cong \rH_{\KK/\QQ}$ as desired.
\end{proof}

\begin{rem}\label{rem:galois}
By \Cref{lem:Shimgeomsimplefactors}
the splitting field of the $\QQ$-group $\rG^{\ad}$
is a totally real field $\KK$.
Hence $\rG^{\ad}_\KK=\prod_{i=1}^k \rG^{\ad}_i$ with $\rG^{\ad}_i$ a geometrically simple $\KK$-group,
and so $\rG^{\ad}_\FF=\prod_{i=1}^k \rG^{\ad}_{i,\FF}$ for any field 
extension $\FF/\KK$.
Moreover, the Galois group $\Gal(\FF/\QQ)$ acts on $\rG^{\ad}_\FF$ by permuting its simple factors,
 meaning that $\vs(\rG^{\ad}_{i,\FF})=\rG^{\ad}_{i^\vs,\FF}$.
Note that a permutation $i\mapsto i^\vs$
is realized by an element $\vs\in\Gal(\FF/\QQ)$ 
if and only if the same permutation
is realized by some element of $\Gal(\KK/\QQ)$.
In particular, observe that 
\begin{itemize}
    \item[(a)]
condition (iii) in \Cref{def:shimura} implies that,
for each factor $\rG^{\ad}_{i,\FF}$,
there exists $\vs\in\Gal(\FF/\QQ)$
such that $\rG^{\ad}_{i^\vs,\RR}$ is non-compact;
\item[(b)]
if $(\rG,\dX)$ is non-compact
and $\rG^{\ad}$ is $\QQ$-simple, then all factors of $\rG^{\ad}_\RR$ are conjugate to one another,
and there are no compact factors.
\end{itemize}
Analogous considerations hold replacing $\rG^{\ad}$ by its universal cover $\wt{\rG}$.
\end{rem}

\subsection{Shimura varieties}\label{sec:Shimura-varieties}

We recall that the ring of {\it{finite ad{\`e}les}} $\AA_f$ is the subset of elements $a=(a_\ell)$ of $\prod_{\text{$\ell$ prime}} \QQ_\ell$ such that $a_\ell\in\ZZ_\ell$ for all but finitely many $\ell$'s. This can be equivalently defined as
$\AA_f=\hat{\ZZ}\otimes_{\ZZ}\QQ$, where $\displaystyle \hat{\ZZ}\coloneqq\lim_{\substack{\longleftarrow\\ N}} \ZZ/N=\prod_{\text{$\ell$ prime}}\ZZ_\ell$.

\begin{dfx}
    Let $(\rG,\dX)$ be a Shimura datum, and let $\rK\subset \rG_{\AA_f}$ be a compact open subgroup. There is a natural algebraic variety,  $\Sh_K(\dX,\rG)$, 
    defined over a number field $\EE(\rG,\dX)\subset \CC$,
    whose complex points are
$$
    \Sh_{\rK}(\rG,\dX)(\CC)
    \cong \rG(\QQ)\backslash \dX\times \rG(\AA_f)/\rK.$$
We call $\Sh_{\rK}(\rG,\dX)$ a \textit{Shimura variety}.
A {\em morphism of Shimura varieties} $\Sh_{\rK}(\rG,\dX)\ra \Sh_{\rK'}(\rG',\dX')$ is a morphism of Shimura data $(\rG,\dX)\ra (\rG',\dX')$ that sends $\rK\ra \rK'$ under the natural morphism $\rG_{\AA_f}\ra \rG'_{\AA_f}$.
\end{dfx}

We will use the same symbol for a morphism of Shimura varieties
and for the induced morphism of complex varieties.

\begin{rem}
As in \cite[Lemma 5.13]{milne2005},
if $(\GA_i)_{i\in I}$ is a system of representatives in 
$\rG(\AA_f)$ 
of the finite quotient 
$\rG(\QQ)\backslash \rG(\AA_f)/\rK$,
and denoting $\Gamma_i \coloneqq 
\rG(\QQ) 
\cap (\GA_i \rK \GA_i^{-1})$ for each $i\in I$, then
\[
\Sh_{\rK}(\rG,\dX)(\CC)
\cong \coprod_{i\in I} (\Gamma_i\backslash \dX)\, .
\]
Note that the Hermitian metric on $\dX$ descends to 
$\Sh_{\rK}(\rG,\dX)(\CC)$. 
Moreover 
$\Gamma_i\subset \rG(\QQ)$
is an {\it{arithmetic subgroup}},
namely $\Gamma_i$ is commensurable to 
$\rG(\QQ)\cap \GL_N(\ZZ)$
for any
embedding $\rG\hra \GL_N$.
\end{rem}

As we are interested in compact Shimura varieties, we shall make use of the following group-theoretic characterization.
A proof can be found in \cite[page 260, Theorem 5.5]{platonovrapinchuk}.

\begin{lm}\label{lm:aniso}
The Shimura variety $\Sh_{\rK}(\rG,\dX)(\CC)$ is compact if and only if $\rG^{\ad}$ is anisotropic.
\end{lm}

Motivated by this, we shall call a Shimura datum $(\rG,\dX)$ {\em compact} if $\rG^{\ad}$ is anisotropic.

\subsection{Special and weakly special subvarieties}\label{sec:weakly}

Given a Shimura datum $(\rG,\dX)$, there is a natural $\QQ$-variation of Hodge structures ($\QQ$-VHS) on~$\dX$  associated to any $\QQ$-representation $V$ of $\rG^{\ad}$
(see \cite[Section 2.3]{moonen1998linearityI}). 
This descends to a $\QQ$-VHS on 
$\Sh_{\rK}(\rG,\dX)(\CC)$ 
for any sufficiently small $\rK$.

Let $V_{\ad}$ denote the $\QQ$-VHS corresponding to the adjoint representation
of $\rG^{\ad}$ on its Lie algebra.
The  {\em{adjoint Mumford-Tate group}} of a point $[x,\GA]\in 
\Sh_{\rK}(\rG,\dX)(\CC)
$ is 
the smallest closed $\QQ$-subgroup 
$\MT^{\ad}_x(V)$
of $\rG^{\ad}$ 
whose complexification
contains the image of
$\bS_\CC\stackrel{x}{\longrightarrow} \rG_\CC\ra \rG^{\ad}_\CC$.
Note that
the adjoint Mumford-Tate group of a very general point of any irreducible component of 
$\Sh_{\rK}(\rG,\dX)(\CC)$ 
is $\rG^{\ad}$.

We say that a point $[x,\GA]\in 
\Sh_{\rK}(\rG,\dX)(\CC)
$ is {\em Hodge-generic} if its 
adjoint Mumford-Tate group is equal to~$\rG^{\ad}$. Likewise, an irreducible subvariety 
$Z\subseteq 
\Sh_{\rK}(\rG,\dX)(\CC)
$ 
is called Hodge-generic (within
$
\Sh_{\rK}(\rG,\dX)(\CC)
$) if its very general point is, or equivalently if it contains a Hodge-generic point.

\begin{dfx}[\cite{moonen1998linearityI}]
    A point $[x,\GA]\in 
\Sh_{\rK}(\rG,\dX)(\CC) 
$ is a {\em special point} if there exists a torus $\rT\subseteq \rG$
such that $x:\bS\ra \rG_\RR$ factors through $\rT_\RR$.
    An irreducible subvariety 
    $\Shi\subseteq 
\Sh_{\rK}(\rG,\dX)(\CC) 
$ is {\em weakly special} if it is totally geodesic;
moreover $\Shi$ is {\em special} if it is weakly special and contains a special point.
\end{dfx}

The following result of Moonen explains how all special and weakly special subvarieties arise from other Shimura varieties.

\begin{thm}[{\cite[Theorem 4.3]{moonen1998linearityI}}]\label{thm:weaklyspecials}
  Let $\Shi\subseteq 
\Sh_{\rK}(\rG,\dX)(\CC) 
$ be a weakly special subvariety. Then there exist
  \begin{enumerate}
      \item an embedding of Shimura data $(\rH,\dY)\hra (\rG,\dX)$
      \item a decomposition $(\rH^{\ad},\dY^{\ad})\cong (\rH^{\ad}_1,\dY^{\ad}_1)\times (\rH^{\ad}_2,\dY^{\ad}_2)$, and compatible components $\dY^+\cong \dY^{\ad,+}\cong \dY_1^{\ad,+}\times \dY_2^{\ad,+}$
      \item elements $\GA\in 
\rG(\AA_f)
$ and $y_2\in \dY_2^{\ad,+}$
  \end{enumerate}
such that $\Shi$ is the image of $\dY_1^{\ad,+}\times \{y_2\}\times  \{\GA\}$.

Moreover, if $\Shi$ is special, then $\rH_2$ may be taken to be trivial.
\end{thm}

Furthermore, we have the following result characterizing the smallest special subvariety containing a given irreducible variety.

\begin{thm}[{\cite[2.9]{moonen1998linearityI}}]\label{thm:specialclosure}
Let $Z\subseteq 
\Sh_{\rK}(\rG,\dX)(\CC)
$ be an irreducible subvariety.
Then there exists a smallest special subvariety
$\Shi\subseteq 
\Sh_{\rK}(\rG,\dX)(\CC) 
$ containing $Z$.
In particular, $\Shi$ is an irreducible component of the image of the map 
$\Sh_{\rK'}(\rG',\dX')(\CC)\ra \Sh_{\rK}(\rG,\dX)(\CC)$ 
induced by an embedding $(\rG',\dX')\hra (\rG,\dX)$ of Shimura data and a sufficiently small
compact open subgroup $\rK'$ of $\rG'_{\AA_f}$,
and
$(\rG')^{\ad}$ is the adjoint Mumford-Tate group of a very general point of $Z$.
\end{thm}

\section{Special subvarieties of 
$\cA_{g}$
}\label{sec:special-Ag}

In this section we review the construction of Siegel modular varieties, discuss symplectic representations
associated to their special subvarieties, and prove a splitting result for
non-compact special subvarieties of 
$\cA_{g}$
isogenous to products (\Cref{prop:productShimura}).

\subsection{Siegel modular variety}
Here we review the quintessential Siegel modular variety from the Shimura perspective. See \cite[\S6]{milne2005} for more details on this section.

Let~$V$ be a $\QQ$-vector space of dimension~$2g$, and let~$\psi$ be a symplectic form on~$V$.
We denote~ $\GSp(\psi)$ the general symplectic group,
whose set of $R$-points is
$$
\GSp(\psi)(R)
\coloneqq\left\{A\in \GL_R(V\otimes_\QQ R) \ \Big| 
\begin{array}{l}
\exists \chi(A)\in R^\times: \forall v,v'\in V\otimes_\QQ R \\
\psi(Av,Av')= \chi(A)\cdot\psi(v,v')
\end{array}\right\} .$$

The map $A\mapsto \chi(A)$ gives a {\it{similitude character}} $\GSp(\psi)\ra \GG_m$, whose kernel is the symplectic group $\Sp(\psi)$, which is also the derived subgroup of $\GSp(\psi)$. 

There is a natural Shimura datum $(\GSp(\psi),\dX(\psi))$ defined as follows. For any 
symplectic basis $\cB_\psi=(e_1,\dots,e_g,f_1,\dots,f_g)$ of $V$ define $\mu_{\cB}:\bS\ra \GSp(\psi)_\RR$ so that $$\mu_{\cB}(a+bi)(e_j)=ae_j+bf_j,\quad \mu_{\cB}(a+bi)(f_j)=af_j-be_j.$$ 
The set of all such $\mu_{\cB}$, as $\cB$ varies, constitutes $\dX(\psi)$. 

Then $\SD_g\coloneqq (\GSp(\psi),\dX(\psi))$ is the {\em Siegel Shimura datum} of genus~$g$. The name is justified by observing that~$\dX(\psi)$ can be
identified with $\Sieg^+_g\sqcup \Sieg^-_g$, where $\Sieg^+_g$ is the usual Siegel upper half-space.

If $L\subset V$ denotes an integral lattice that is self-dual with respect to~$\psi$, and $\rK\subset \rG_{\AA_f}$ denotes the stabilizer of $L\otimes\hat{\ZZ}$, then $\Sh_{\rK}(\GSp(\psi),\dX(\psi)))$
can be identified with the 
{\it{moduli space $\ASh_g$ of ppav}}.
For any $N\in\ZZ_{>0}$, if we denote $\rK_N\subset \rK$ the subgroup of those elements that act trivially on $L/(N\cdot L)$, 
then $\Sh_{\rK_{_N}}(\GSp(\psi),\dX(\psi))$ can be identified with the 
{\it{moduli space $\ASh_g(N)$ of ppav with full level-$N$ structure}}.

\begin{notation}
    In order to avoid too heavy notation, we use the symbol
    $\cA_g\coloneqq\ASh_{g,\CC}$ to denote the complex variety associated to $\ASh_g$, namely
    the {\it{coarse moduli space of complex ppav of dimension $g$}}.
    Similarly, we write $\cA_g(N)\coloneqq\ASh_g(N)_\CC$.
\end{notation}

Writing $(V,\psi)$ as a direct sum $(V_1,\psi_1)\oplus (V_2,\psi_2)$ of $\QQ$-symplectic spaces of dimensions $2g_1$ and $2g_2$, we obtain a group $$\GSp(\psi_1,\psi_2)\subset \GSp(\psi_1)\times\GSp(\psi_2)$$ defined as the subgroup of pairs with the same similitude character. We endow $\GSp(\psi_1,\psi_2)$ with  a Shimura datum $\dX(\psi_1,\psi_2)$ as follows.
Consider symplectic bases $\cB_k=(e^k_1,\dots,e^k_{g_k},f^k_1,\dots,f^k_{g_k})$ for $(V_k,\psi_k)$
and let $\dX(\psi_1,\psi_2)$ be the conjugacy class of the homomorphism
$\mu_{\cB_{1,2}}:\bS\ra \GSp(\psi_1,\psi_2)_\RR$ defined by
$\mu_{\cB_{1,2}}(a+bi)(e^k_j)=ae^k_j+bf^k_j$ and $\mu_{\cB_{1,2}}(a+bi)(f^k_j)=af^k_j-be^k_j$
for $k=1,2$.

Now there are two natural inclusions $\GSp(\psi_1,\psi_2)\subset \GSp(\psi_1)\times\GSp(\psi_2)$
and $\GSp(\psi_1,\psi_2)\subset\GSp(\psi)$.
The former determines an embedding of the Shimura datum $\SD_{g_1,g_2}\coloneqq(\GSp(\psi_1,\psi_2),\dX(\psi_1,\psi_2))$ into $\SD_{g_1}\times\SD_{g_2}$,
that identifies $\dX(\psi_1,\psi_2)$ to two of the four components of $\dX(\psi_1)\times \dX(\psi_2)$,
and which descends to \'etale covers of Shimura varieties.
The latter determines an embedding $\GSp(\psi_1,\psi_2)\hra \GSp(\psi)$.
If $L_k$ is an integral $\psi_k$-self-dual lattice in $V_k$,
then $L=L_1\oplus L_2$ is an integral $\psi$-self-dual lattice in $V$
and the embedding $\GSp(\psi_1,\psi_2)\hra \GSp(\psi)$ induces
the standard map 
$\iota:\ASh_{g_1}\times\ASh_{g_2}\ra\ASh_{g_1+g_2}$.

\begin{dfx}\label{dfx:indecomposable}
    An embedding of Shimura data $(\rG,\dX)\hra \SD_g$ is called {\em indecomposable} if it does not factor through the standard morphism $\SD_{g_1,g_2}\hra \SD_g$ for any positive $g_1,g_2$ with $g_1+g_2=g$. 
    A special subvariety $\Shi\subseteq 
    \cA_{g}$ is called {\em indecomposable} if the corresponding embedding of Shimura data is indecomposable.
\end{dfx}

From the representation-theoretic point of view,
an embedding $(\rG,\dX)\hra\SD_g$ is indecomposable if and only if the corresponding
$\GSp$-representation $V$ of $\rG$ is $\GSp$-irreducible (as defined in \Cref{sec:symplectic} below).

From a geometric point of view,
it is easy to see that an embedding $(\rG,\dX)\hra\SD_g$ is indecomposable if and only if the corresponding family of $g$-dimensional abelian varieties over $\dX$ has no nontrivial abelian subvarieties, namely is not isogenous to a product family.
 
The above considerations show that, for a decomposable special subvariety $\Shi\subset 
\cA_{g}
$,
there exist positive integers $g_1,g_2$ with $g_1+g_2=g$ and a commutative diagram
\[
\xymatrix{
\Sh_{\rK}(\rG,\dX)(\CC)^+ \ar[d]_\pi \ar@{^(->}[r] & \Sh_{\rK}(\rG,\dX)(\CC)  \ar@{^(->}[r]^{\Psi\quad} & \cA_{g_1}(N)\times\cA_{g_2}(N) \ar[d]^{\Phi} \\
S \ar@{^(->}[rr] && \cA_{g}
}
\]
where 
$\Sh_{\rK}(\rG,\dX)(\CC)^+$
is a connected component of 
$\Sh_{\rK}(\rG,\dX)(\CC)$,
the vertical map on the left is a finite cover, $\Psi$ is a morphism of Shimura varieties
and $\Phi$ is a Hecke translate of the standard map.{
Hence $\Shi$ is a Hecke translate of a special subvariety of 
$\cA_{g}$
contained
in the locus of products.

\subsection{Symplectic representations}\label{sec:symplectic}

Now we want to briefly discuss representations that are associated to morphisms of some Shimura datum to the Siegel Shimura datum.

\begin{dfx}\label{dfx:Sp-irr}
Let $\rG$ be a connected reductive algebraic group over $\KK$. An {\em $\Sp$-representation} $(V,\psi)$ of $G$
is a $\KK$-representation $V$ of $\rG$ equipped with a 
non-degenerate symplectic pairing $\psi$ preserved by the action of~$\rG$.  An {\em $\Sp$-subrepresentation} is a $\rG$-invariant symplectic subspace. 
An $\Sp$-subrepresentation is called {\em $\Sp$-irreducible} if it has no nontrivial symplectic subrepresentations. 

Likewise, a {\em{$\GSp$-representation}} of $\rG$ is a representation $V$ together with a non-degenerate symplectic pairing $\psi$ that is preserved by $\rG$ up to scaling; $\GSp$-subrepresentations and $\GSp$-irreducibility are defined analogously.
\end{dfx}

Given a $\GSp$-representation $(V,\psi)$ of $\rG$, we obtain a
character $\chi$ of $\rG$ such that $\psi(gv,gv')=\chi(g)\psi(v,v')$
for all $v,w\in V$. The natural vector space isomorphism $V\ra V^\vee$, induced by $\psi$, gives an isomorphism of $\rG$-representations $V\ra V^\vee \otimes \chi$.  We call 
$V^\vee(\chi)\coloneqq V^{\vee}\otimes\chi$ the \textit{$\GSp$-dual} representation.

The following lemma for $\Sp$-representations
was stated in \cite[Theorem 3]{malcev} without proof,
and then
proven in \cite[Section 2.3]{satake:imb} over $\RR$. 
For completeness, here we recall the argument which works over any field.

\begin{lm}\label{lm:direct-sum}
Any $\GSp$-representation of $\rG$ is a direct sum of $\GSp$-irreducible subrepresentations.
\end{lm}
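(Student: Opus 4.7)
The plan is to induct on $\dim V$, with the zero-dimensional case being vacuous. For the inductive step, if $V$ is already $\Sp$-irreducible, there is nothing to prove. Otherwise, by the very definition of $\Sp$-irreducibility, there exists a nontrivial sub-representation $W\subsetneq V$, i.e.~a $G$-invariant subspace on which $\omega$ restricts non-degenerately.

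The key step is then to use the symplectic orthogonal complement to split off $W$. Let $W^\perp:=\{v\in V\,|\,\omega(v,w)=0\text{ for all }w\in W\}$. Since $\omega$ is $G$-invariant and $W$ is $G$-invariant, $W^\perp$ is again $G$-invariant. Moreover, $W\cap W^\perp=0$ because $\omega|_W$ is non-degenerate, so a dimension count gives $V=W\oplus W^\perp$ as $G$-representations. The non-degeneracy of $\omega$ on $V$ then forces $\omega|_{W^\perp}$ to be non-degenerate as well, so $W^\perp$ is itself a sub-representation in the sense of the definition.

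Applying the inductive hypothesis separately to $W$ and $W^\perp$, each of which has strictly smaller dimension than $V$, we obtain decompositions of $W$ and $W^\perp$ into $\Sp$-irreducible sub-representations; concatenating these gives the desired decomposition of $V$.

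Since the argument rests only on the symplectic orthogonal construction and the bookkeeping of what counts as a sub-representation, no essential obstacle arises; in particular, semisimplicity of $G$ is not needed here (the symplectic complement provides a $G$-invariant complement for free, with no appeal to complete reducibility). The only point requiring minor care is verifying that $W^\perp$ is a genuine sub-representation and not merely an invariant linear summand, which follows at once from $V=W\oplus W^\perp$ and the non-degeneracy of $\omega$.
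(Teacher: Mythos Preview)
Your proof is correct and in fact more economical than the paper's. Both argue by induction on $\dim V$, but the strategies for producing a symplectic direct summand differ.

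The paper starts from an irreducible \emph{linear} summand $W\subset V$ (a minimal $G$-invariant subspace, not assumed symplectic) and then splits into cases according to whether $\omega|_W$ is non-degenerate. When it is, $W$ is already $\Sp$-irreducible and one passes to $W^\perp$; when every irreducible linear summand is isotropic, the paper pairs $W$ with another irreducible linear summand $W'\cong W^\vee$ via the symplectic form so that $W\oplus W'$ is an $\Sp$-irreducible sub-representation, and then passes to $(W\oplus W')^\perp$.

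You bypass this entirely by noting that the \emph{definition} of $\Sp$-irreducibility already hands you a proper nonzero $G$-invariant \emph{symplectic} subspace $W$ whenever $V$ is not $\Sp$-irreducible, so the orthogonal complement trick applies immediately with no case analysis. This is genuinely shorter, and as you observe it does not even require complete reducibility of $G$-representations. The trade-off is that the paper's argument, as a byproduct, exhibits the internal structure of an $\Sp$-irreducible piece (either a single irreducible symplectic linear summand, or a hyperbolic pair $W\oplus W^\vee$ of isotropic irreducibles), which is morally relevant to how $\Sp$-irreducibles are analyzed later; your argument proves the lemma as stated but does not yield this description.
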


\begin{proof}
We proceed by induction on the dimension of the $\GSp$-representation $(V,\psi)$. The case $\dim V=0$ is trivial.

Suppose $\dim V>0$.
It is enough to show that there exists a nontrivial
$\GSp$-irreducible subrepresentation $U$ of $V$.
Indeed, $V$ will then decompose as a direct sum $V=U\oplus U^\perp$
of $\rG$-invariant subspaces, and the conclusion follows
applying the inductive hypothesis to $U^\perp$.

Now, consider
an irreducible $\rG$-invariant linear subspace $W$ of $V$.
The pairing $\psi|_W$ induces a $\rG$-map $\beta:W\ra W^\vee\otimes \chi$ that must be either zero or an isomorphism, since $\mathrm{ker}(\beta)$ is an invariant linear summand of $W$.

If $\beta$ is an isomorphism, then $W$ is a $\GSp$-subrepresentation,
hence $\GSp$-irreducible, and so we are done.

Suppose now that
the restriction of $\psi$
to every irreducible $\rG$-invariant linear subspace of $V$ is zero.
The pairing against $W$ with $\psi$ gives a map $\phi:V\ra W^\vee(\chi)$.
Since $\psi$ is non-degenerate, the map $\phi$ is nonzero and thus surjective (by the irreducibility of $W^\vee$).
Since $\rG$ is reductive and $\ker(\phi)$ is $\rG$-invariant,
there exists an irreducible $\rG$-invariant linear subspace
$W'$ of $V$ such that $V=\ker(\phi)\oplus W'$.
Then the induced map $\psi': W'\ra W^\vee(\chi)$ is 
an isomorphism. Since $W$ is isotropic for $\psi$, it follows that
$W\cap W'=\{0\}$ and so $W\oplus W'$ is a $\GSp$-subrepresentation.
We conclude by observing that $W\oplus W'$
must be $\GSp$-irreducible, as its only nontrivial $\rG$-invariant subspaces are themselves irreducible, and by our assumption the restriction of $\psi$ to them is zero.
\end{proof}

If $\rho$ is an irreducible linear representation of $\rG$,
the {\it{isotypic component}} $V^\rho$ 
is the subspace of~$V$ generated by all subrepresentations of $V$
isomorphic to $\rho$. The decomposition $V=\bigoplus_{\rho} V^\rho$
is canonical.

If $\rho$ is symplectic, then $V^{\rho}$ is a direct sum
of representations isomorphic to $\rho$ by \Cref{lm:direct-sum}.
If $\rho$ is not symplectic and $\rho^\vee\otimes\chi$ is its $\GSp$-dual,
then $V^\rho\oplus V^{(\rho^\vee\otimes\chi)}$ is symplectic
and so it is a direct sum of $\GSp$-irreducible representations
isomorphic to $\rho\oplus(\rho^\vee\otimes\chi)$.
It follows that each $V^\rho$ decomposes as a direct sum
of copies of $\rho$, though non-canonically.
\\

The following simple observation
will be useful in
\Cref{sec:decoupled} and in \Cref{sec:non-decoupled}.

\begin{lm}\label{lm:m}
    Let  $(V,\psi)$ be 
a $\GSp$-irreducible 
    representation of a reductive group $G$.
Then all the irreducible subrepresentations of 
    $V_{\CC}$ 
    are either Galois conjugate to each other or to each other's $\GSp$-duals. In particular, they all have the same dimension, and are all of the same {\em type}: orthogonal, or symplectic, or not self-dual (NSD).
\end{lm}

\begin{proof}
    Let 
$U\subseteq V_\CC$ be an irreducible linear summand, and let $U'\subseteq V_\CC$
denote the
     subrepresentation generated by all
    subspaces of $V_\CC$ that are isomorphic to a Galois conjugate of $U$ or of its $\GSp$-dual $U^\vee(\chi)$. Then $U'$ is Galois stable, and thus 
    $U'=V'_\CC$ for some rational subrepresentation $V'$ of $V$.
    Moreover, 
    $V$ decomposes as $V'\oplus V''$, with $V''$ not having any of the irreducible linear summands that appear in~$V'$
    nor their $\GSp$-duals.    
    Thus $V'$ is symplectic. Since $V$ is assumed to be $\GSp$-irreducible, we must have $V=V'$ and the conclusion follows.
\end{proof}

We have seen that special subvarieties of 
$\cA_{g}$
are associated
to embeddings of Shimura data $(\rG,\dX)$ into $\SD_g$.
These Shimura data, already studied in \cite{deligne}, are called {\it{of Hodge type}} in
\cite[end of Section II.3]{milne1990}.
This motivates the following definition.

\begin{dfx}
A {\em{symplectic representation}} of a Shimura datum $(\rG,\dX)$ is a $\GSp$-representation $(V,\psi)$ of $\rG$
that induces a morphism $(\rG,\dX)\ra (\GSp(\psi),\dX(\psi))$ of Shimura data. This morphism is called \textit{irreducible} if $(V,\psi)$ is $\GSp$-irreducible as a representation of $\rG$.
\end{dfx}

We will often invoke the following useful result
from \cite[Section 1.3.8]{deligne} (see also \cite[Corollary 10.9]{milne2011shimura}), 
which is essentially
a byproduct of the analysis carried out in \cite[Section 2]{satake-compact}.

\begin{lm}\label{lm:10.7}
Let $(V,\psi)$ be a symplectic representation of a Shimura datum~$(\rG,\dX)$, and let $U\subseteq V_\CC$ denote an irreducible complex-linear subrepresentation of $\rG_\CC$. Let $\wt{\rG}_1,\dots,\wt{\rG}_k$ denote all the simple non-compact
factors of $\wt{\rG}_\RR$.
Then at most one of the $\wt{\rG}_{i,\CC}$ acts nontrivially on $U$.
\end{lm}

\subsection{Products of Shimura data in $\SD_g$}\label{sec:products}
 Using the results recalled above, we now study embeddings into $\SD_g$
 of Shimura data that geometrically factor as a product 
of a non-compact Shimura datum and another Shimura datum. We show that these embeddings 
factor in a suitable way.

We suspect that the following result might be already known, but we could not find 
a reference in the literature. So we include a complete proof.

\begin{thm}\label{prop:productShimura}
    Let $(\rG_{(1)},\dX_{(1)})$ be an adjoint Shimura datum with $\rG_{(1)}$ simple and such that $\rG_{(1),\RR}$ has no compact factors, and let $(\rG_{(2)},\dX_{(2)})$ be any adjoint Shimura datum of positive dimension. 
    If $f:(\rH,\dX_{\rH})\hra \SD_g$ is an embedding of Shimura data, with $(\rH^{\ad},\dX_{\rH}^{\ad})\cong (\rG_{(1)},\dX_{(1)})\times (\rG_{(2)},\dX_{(2)})$, then $f$ is decomposable.
\end{thm}
\begin{proof}
Let $(V,\psi)$ be the $2g$-dimensional $\GSp$-representation of $\rH$
corresponding to the morphism $f$. 
Consider the symplectic representation of 
$\wt{\rG}_{(1)}\times \wt{\rG}_{(2)}$ on $V$ via
$\wt{\rG}_{(1)}\times \wt{\rG}_{(2)}\cong\wt{\rH}\twoheadrightarrow \rH^{\der}\ra \Sp(V,\psi)$, and
canonically decompose $V$ into a direct sum of isotypic components
for the action of $\wt{\rH}$.
We can then write
$V=V_\emptyset\oplus V_1\oplus V_2\oplus V_{1,2}$, where 
$V_\emptyset$ is the trivial isotypic component,
$V_1$ (resp.~$V_2$) is the sum of isotypic components
on which $\wt{\rG}_{(1)}$ 
(resp.~$\wt{\rG}_{(2)}$)
acts nontrivially and $\wt{\rG}_{(2)}$ 
(resp.~$\wt{\rG}_{(1)}$)
acts trivially,
and $V_{1,2}$ is the sum of isotypic components
on which both $\wt{\rG}_{(1)}$ and $\wt{\rG}_{(2)}$ act nontrivially.

Note that
the decomposition $V=V_\emptyset\oplus V_1\oplus V_2\oplus V_{1,2}$ is orthogonal for $\psi$
and $\rH^{\der}$-invariant.
Since $\rH^{\der}$ and $\rZ(\rH)$ generate $\rH$,
the images of $\wt{\rG}_{(1)}$ and $\wt{\rG}_{(2)}$ in $\rH$ are both normal.
Hence both splittings $V=(V_\emptyset\oplus V_2)\oplus (V_1\oplus V_{1,2})$
and $V=(V_\emptyset\oplus V_1)\oplus (V_2\oplus V_{1,2})$ are preserved by $\rH$.
It follows that each of $V_\emptyset$, $V_1$, $V_2$, $V_{1,2}$ is a $\rH$-subrepresentation.

We claim that $V_{1,2}=\{0\}$.

Assuming the claim,
we can write $V=V_1'\oplus V_2$, where $V'_1\coloneqq V_\emptyset\oplus V_1$. Setting $\psi_1\coloneqq \psi
\mid_{V'_1}$ and $\psi_2\coloneqq \psi\mid_{V_2}$, we then obtain that $f$ factors through $f':\rH\ra \GSp(\psi_1,\psi_2)$, which gives the desired result.

In order to prove the claim, we proceed by contradiction and we consider
a nontrivial irreducible 
$(\wt{\rG}_{(1),\CC}\times\wt{\rG}_{(2),\CC})$-invariant subspace
$U$ of 
$(V_{1,2})_{\CC}$.
Note that both $\wt{\rG}_{(1),\CC}$ and $\wt{\rG}_{(2),\CC}$
act nontrivially on $U$.
Hence
we can write $U\cong U_1\otimes U_2$, 
where $U_j$ is a nontrivial irreducible 
representation of $\wt{\rG}_{(j),\CC}$ 
for $j=1,2$.

Now let $\wt{\rG}'_2$ be a simple factor of $\wt{\rG}_{(2),\RR}$
such that $\wt{\rG}'_{2,\CC}$ acts nontrivially on $U_2$.
By \Cref{rem:galois}(a), there exists $\vs\in\Gal(\RR/\QQ)$ such that 
$\wt{\rG}''_2\coloneqq \vs(\wt{\rG}'_2)$
is non-compact. 
Let $\vs'\in\Gal(\CC/\QQ)$ be an element that sends
$\wt{\rG}'_{2,\CC}$ to $\wt{\rG}''_{2,\CC}$.
Up to replacing 
$\wt{\rG}'_2$ by $\wt{\rG}''_2$ and
$U$ by $\vs'(U)$, we may and do assume that 
$\wt{\rG}'_2$ is non-compact.
Likewise, let $\wt{\rG}'_1$ be a simple
factor of 
$\wt{\rG}_{(1),\RR}$ such that
$\wt{\rG}'_{1,\CC}$ acts nontrivially on $U_1$. 
By assumption 
$\wt{\rG}'_1$ 
is non-compact.

Thus, $\wt{\rG}'_1$ and $\wt{\rG}'_2$ are distinct, non-compact,
simple $\RR$-factors of $\wt{\rH}_\RR$,
and both $\wt{\rG}'_{1,\CC}$ and $\wt{\rG}'_{2,\CC}$ act
nontrivially on $U$.
This contradicts \Cref{lm:10.7}, and so the claim is proven.
\end{proof}

\section{Ax-Schanuel for Shimura varieties,\\ and Hodge-generic compact subvarieties}\label{sec:ax-schanuel}

In this section we recall the statement of the (weak) Ax-Schanuel conjecture for Shimura varieties, proven in \cite{MPT}.
As a consequence, we provide a compactness criterion for subvarieties of 
$\cA_{g}$,
borrowing an idea from
\cite[Theorem 1.9(i)]{khur}, and
use this criterion to prove a more general version of \Cref{thm:dmcgAg}.
Finally, we will link the maximal dimension of a
compact subvariety of 
$\cA_{g}$ 
to the 
maximal dimensions
of compact special subvarieties
of 
$\cA_{g'}$
for $g'\leq g$.

\subsection{Ax-Schanuel for Shimura varieties, and a compactness criterion}\label{sec:ax}

Consider a Shimura variety~$\Sh_{\rK}(\rG,\dX)$ and let $\dX^+$ be a connected component of $\dX$.
Denote by $\pi:\dX^+\rightarrow 
\Sh_{\rK}(\rG,\dX)(\CC) 
$ the natural projection,
and let $\dX^+\subset \check{\dX}$ be the embedding in the Hermitian symmetric space of compact type dual to $\dX^+$, which is a projective variety (see, for instance, \cite[Chapter VIII, Proposition 7.14]{helgason}).

\begin{thm}[Weak Ax-Schanuel \cite{MPT}]\label{thm:AS}
Let $Y$ be an algebraic subvariety of $\dX^+$ (namely, obtained by intersecting $\dX^+$ with an algebraic subvariety of $\check{\dX}$),
and let $Z\subseteq 
\Sh_{\rK}(\rG,\dX)(\CC)
$ be an irreducible algebraic subvariety.
If $\pi^{-1}(Z)\cap Y$ has an analytic irreducible component~$C$ 
of dimension larger than expected, then~$\pi(C)$ is contained in a weakly special subvariety of 
$\Sh_{\rK}(\rG,\dX)(\CC)$,
of dimension smaller than 
$\Sh_{\rK}(\rG,\dX)(\CC)$
itself.
\end{thm}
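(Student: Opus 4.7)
The plan is to follow the Pila--Zannier strategy, combining o-minimality, the Pila--Wilkie counting theorem, and monodromy control. The first step is to restrict the uniformization $\pi$ to a Siegel fundamental set $\cF\subset\cD^+$ for the action of $\Gamma$; by the work of Klingler--Ullmo--Yafaev, the map $\pi|_{\cF}$ is definable in the o-minimal structure $\RR_{\mathrm{an, exp}}$. Moreover, the algebraic subvariety $W\subset\check{\cD}^+$ and the preimage $\pi^{-1}(X)$ become definable once intersected with $\cF$. The expected dimension of an irreducible component of $\pi^{-1}(X)\cap W$ is $\dim W+\dim X-\dim\cD^+$, because $\pi$ has discrete fibers, and the hypothesis is that some analytic component $U$ strictly exceeds this bound.

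Next, I would parametrize the family of translates $gW$ for $g$ in a definable subset $\Sigma\subset G(\RR)$ that also produce an analytic component of $gW\cap\pi^{-1}(X)$ of the same excess dimension as $U$. The set $\Sigma$ is definable in $\RR_{\mathrm{an, exp}}$, since the relevant dimension conditions on intersections of definable sets are themselves definable. The atypical component $U$ contributes many $\Gamma$-rational points to $\Sigma$: each $\gamma\in\Gamma$ with $\gamma\cF\cap U\neq\emptyset$ supplies a translate in $\Sigma$, and the number of such $\gamma$ of bounded height grows polynomially in the height as $U$ is explored by $\Gamma$-translates.

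The heart of the argument is to apply the Pila--Wilkie counting theorem to conclude that $\Sigma$ contains a positive-dimensional connected semialgebraic block through the identity. Using the hyperbolic Ax--Lindemann theorem, together with an Andr\'e--Deligne style analysis of the algebraic monodromy of the associated variation of Hodge structures, this block can be promoted to a nontrivial $\QQ$-algebraic subgroup $H\le G$ whose image in $G^{\ad}$ stabilizes $\pi(U)$. The orbit of $H$ through $\pi(U)$ then lifts to a weakly special subvariety of $\Shi$ containing $\pi(U)$; it is proper since the original intersection was strictly larger than expected, which forces $H$ to be a proper subgroup of $G$.

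The main obstacle is the passage from the real-analytic stabilizer produced by Pila--Wilkie to an actual $\QQ$-algebraic subgroup of $G$: one has to verify that the semialgebraic blocks found in $\Sigma$ reflect genuine rational algebraicity rather than spurious real-analytic coincidences. This is precisely where hyperbolic Ax--Lindemann and the rationality of algebraic monodromy must be deployed carefully and uniformly across all Shimura data, and it is the delicate point settled in \cite{MPT}.
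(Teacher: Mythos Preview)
The paper does not prove this theorem. It is quoted as the main result of \cite{MPT} (Mok--Pila--Tsimerman) and used throughout \Cref{sec:ax-schanuel} purely as a black box; no argument for it is given or even sketched in the paper. So there is no ``paper's own proof'' to compare your attempt against.

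As for the sketch itself: it is a reasonable high-level outline of the strategy of \cite{MPT}, with the right ingredients (definability of $\pi|_{\cF}$ via Klingler--Ullmo--Yafaev, a definable parameter set of translates with atypical intersection, Pila--Wilkie counting, and monodromy/Ax--Lindemann input). Two points where your sketch is loose compared to the actual argument: (1) the crucial lower bound --- polynomially many $\gamma\in\Gamma$ of bounded height producing atypical translates --- does not follow just from $\gamma\cF\cap U\neq\emptyset$; one must first reduce to the case where $\pi(U)$ is Hodge-generic in $\Shi$ (equivalently, its algebraic monodromy is all of $G$), and then use this Zariski-density to manufacture many lattice points, which is exactly where the hypothesis ``not contained in a proper weakly special'' enters; (2) the endgame in \cite{MPT} is an induction on $\dim\Shi$ rather than a direct promotion of a semialgebraic block to a $\QQ$-subgroup stabilizing $\pi(U)$: the blocks produced by Pila--Wilkie are fed back through Ax--Lindemann to produce proper weakly special subvarieties containing pieces of $U$, and the inductive hypothesis finishes the argument. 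Your last paragraph correctly identifies this step as the delicate one, but the mechanism you describe (orbit of a $\QQ$-group $H$ through $\pi(U)$) is not quite how the contradiction is reached.
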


As a consequence of \Cref{thm:AS}, one obtains \Cref{thm:KU} below, which is essentially a simplified version of \cite[Theorem 1.9(i)]{khur}, whose proof we include for completeness.

For a subvariety~$Z'\subseteq 
\Sh_{\rK}(\rG,\dX)(\CC)
$, for any $\gamma\in 
\rG(\QQ)^+
$ the image $\pi(\gamma\cdot\pi^{-1}(Z'))$
is a subvariety of 
$\Sh_{\rK}(\rG,\dX)(\CC)$
called the {\em $\gamma$-translate of $Y$}. 
We remark that, even if $Z'$ is irreducible, its $\gamma$-translate need not be.
However, an irreducible component of the $\gamma$-translate of a (weakly) special subvariety is again (weakly) special.

\begin{thm}\label{thm:KU}
Let $Z\subseteq 
\cA_{g} 
$ be an irreducible subvariety, such that the adjoint Mumford-Tate group 
of its very general point $\MT^{\ad}_Z$ is $\QQ$-simple,
and let $\Shi\subseteq
\cA_{g} 
$ be the smallest special subvariety containing~$Z$.

If $\Shi'\subseteq 
\cA_{g} 
$ is 
a special subvariety such that 
\begin{itemize}
\item[(a)]
$\dim Z+\dim \Shi'\ge \dim \cA_g$, and
\item[(b)]
$\Shi'\cap \Shi\neq\emptyset$,
\end{itemize}
then there exists 
$\gamma\in
\GSp(\psi)(\QQ)^+
$ such that
the intersection of $Z$ with the $\gamma$-translate
of $\Shi'$ inside $
\cA_{g} 
$ has a component
of the expected dimension.
\end{thm}

\begin{rem}\label{rem:intersection}
A simple observation that is regularly used in the below arguments is the following. Let $M$ and $N$ be closed complex-analytic subvarieties of an ambient complex variety $X$. Then if $M\cap N$ has a (nonempty) component $C$ of the expected dimension, then $M\cap N_t$ has a (nonempty) component of the expected dimension for any small perturbation $(N_t)$ of $N=N_0$ inside $X$.
Indeed, we can first localize to a small ball $B\subset X$ centered at a point $p$ of $C$.
Then we can reduce to the case of $M,N$ of complementary dimensions in $X$, by
intersecting both $M$ and $N$ by $\dim(C)$ general hypersurfaces of $B$ through $p$.
Namely, we can assume that $\dim(M)+\dim(N)=\dim(X)$, and that $B\cap (M\cap N)=\{p\}$.
Up to shrinking $B$ a little, we can also assume that $\partial B\cap(M\cap N_t)=\emptyset$
for small $t$.
The above claim then follows from the upper semi-continuity of the dimension $t\mapsto\dim (B\cap M)\cap N_t$, and from the positivity and constancy of the local intersection number $t\mapsto \mathrm{deg}\,(B\cap M)\cdot N_t$.
\end{rem}

\begin{proof}[Proof of \Cref{thm:KU}]
Let $\Shi$ correspond to the 
embedding $(\rG,\dX)\hra (\GSp(\psi),\dX(\psi))$ of Shimura data.
By \Cref{thm:specialclosure} it follows that $\rG^{\ad}\cong \MT_Z^{\ad}$ and is therefore $\QQ$-simple. 

Now we obtain a natural map $\pi:\dX^+\ra \Shi\subseteq 
\cA_{g}
$. Fix a point $z_0\in Z$ that is
Hodge-generic in $S$, choose $\tilde{z}_0\in\pi^{-1}(z_0)$, and let $\gamma_0\in 
\rG(\RR)^+
$ be such that $\tilde{z}_0\in \gamma_0\cdot \pi^{-1}(\Shi\cap\Shi')$.

By (a), we have that
$$\dim Z+ \dim (\Shi'\cap\Shi)\geq \dim Z + \dim\Shi'+\dim\Shi-\dim\cA_g\geq \dim \Shi.$$ Moreover, since $z_0$ is Hodge-generic in $S$, by \Cref{thm:specialclosure} and \Cref{thm:weaklyspecials} there are no weakly special subvarieties of $\Shi$ containing $z_0$, other than $\{z_0\}$ or $\Shi$ itself. Thus, it follows from \Cref{thm:AS} that 
$\pi^{-1}(Z)\cap (\gamma_0\cdot\pi^{-1}(\Shi\cap\Shi'))$ 
has an analytic irreducible component of the expected dimension, containing~$\tilde{z}_0$.
 
By \Cref{rem:intersection},
for any $\gamma\in 
\rG(\RR)^+
$ sufficiently close to~$\gamma_0$, there will also exist an analytic irreducible component 
$C_\gamma$
of $\pi^{-1}(Z)\cap (\gamma\cdot\pi^{-1}(\Shi\cap\Shi'))$ that still has the expected (non-negative) dimension.
In particular, since 
$\rG(\QQ)^+$
is dense in 
$\rG(\RR)^+$
for the classical topology, 
this property holds for any element of 
$\rG(\QQ)^+$
sufficiently close to~$\gamma_0$. 
We fix one such $\gamma$ in 
$\rG(\QQ)^+$.
Regarding $\gamma$ as an element of 
$\GSp(\psi)(\QQ)$,
it is then enough to consider the $\gamma$-translate of $S'$ inside 
$\cA_{g}$,
since a component of its intersection
with $Z$ is exactly $\pi(C_\gamma)$.
\end{proof}

We would like to extend \Cref{thm:KU} to the intersection of $Z$ with a weakly special subvariety.
The following \Cref{cor:KU}, which can also be understood as a non-compactness criterion for $Z$,
will play a key role in the proof of \Cref{thm:dmcNCShimura}.

In order to explain the idea behind \Cref{cor:KU} below with an example, consider the special subvariety $S'\subsetneq
\cA_{g} 
$ obtained as the image of the natural map 
$\iota:\cA_{1}\times\cA_{g-1}\ra \cA_{g}$.
The image $\iota([E]\times 
\cA_{g-1}
)$ is closed of codimension $g$ for all $[E]\in
\cA_{1}
$;
moreover, it limits to~$\partial
\cA_{g}^*
$ as $[E]$ approaches the boundary point of  
$\cA_{1}$.
Now, for the sake of contradiction, consider a compact subvariety $Z\subsetneq
\cA_{g} 
$ of dimension at least $g$.
If the intersection of $Z$ and $\iota([E]\times
\cA_{g-1}
)$ is nonempty and 
has a component of the expected dimension, such property is preserved under small perturbations of $[E]$ (see \Cref{rem:intersection}). Thus, by algebraicity, $Z$ has nonempty intersection with $\iota([E]\times
\cA_{g-1}
)$ for all but possibly finitely many $[E]$, which shows that $Z$ is not compact. In order to achieve non-emptiness
and expected dimension of the intersection,
we use $\gamma$-translates and \Cref{thm:AS},
as in the proof of \Cref{thm:KU}, and deal with a general finite map $\tau':S_1'\times S_2'\to S'$ instead of the standard~$\iota$.

\begin{cor}\label{cor:KU}
Let $Z$ and $S$ be as in \Cref{thm:KU}.
Assume that $\Shi'$ is a special subvariety
corresponding to $(\rH,\dY)\hra(\GSp(\psi),\dX(\psi))$ and that $(\rH^{\ad},\dY^{\ad})\cong (\rH_1,\dY_1)\times (\rH_2,\dY_2)$, so that 
the natural morphism $\dY_1^+\times \dY_2^+\ra \Shi'$ then factors through a finite map $\tau':\Shi'_1\times \Shi'_2\ra \Shi'$.

Assume moreover that
\begin{itemize}
\item[(a)]
$\dim Z+\dim\Shi'_2\ge \dim \cA_g$, and
\item[(b)]
$\tau'(\{s'_1\}\times \Shi'_2)\cap \Shi\neq\emptyset$ for a general  $s'_1\in \Shi'_1$.
\end{itemize}
Then
\begin{itemize}
\item[(i)]
there exists $\gamma\in 
\rG(\QQ)^+
$ such that 
the $\gamma$-translate of $\tau'(\{s'_1\}\times\Shi'_2)$ inside 
$\cA_{g}$
intersects $Z$
in a component of the expected dimension, for a generic $s'_1\in \Shi'_1$;
\item[(ii)]
if $\Shi'_1$ is non-compact, then $Z$ is non-compact.
\end{itemize}
\end{cor}
\begin{proof}
(i) Fix any $s'_1\in\Shi'_1$. The same proof as in \Cref{thm:KU} works identically to show that there exists $\gamma\in 
\rG(\QQ)^+
$ such that 
the $\gamma$-translate of $\tau'(\{s'_1\}\times \Shi'_2)$ 
intersects $Z$ in a component of expected dimension. But now this property holds for $s'_1$ in a Zariski-open subset of $S'_1$.

(ii) Let $\gamma$ be as in (i), and replace $\Shi'$ 
by 
a component of its $\gamma$-translate in 
$\cA_{g}$,
so that the conclusion of (i) holds.
Then $(\tau')^{-1}(Z)$ surjects onto a Zariski open subset of $\Shi'_1$. Since this is non-compact, it follows that $(\tau')^{-1}(Z)$ is not compact. Since $\tau'$ is finite (and therefore proper), it follows that $Z$ is not compact either.
\end{proof}

\subsection{Hodge-generic compact subvarieties of 
$\cA_{g}$
}\label{sec:reduction}

Given an irreducible subvariety $Z\subseteq
\cA_{g}
$, by definition $Z$ is Hodge-generic within the smallest special subvariety~$\Shi$ 
of 
$\cA_{g}$
containing~$Z$. 
We recall that indecomposability for $S$ was introduced in
\Cref{dfx:indecomposable}.

In this section we prove the following.

\begin{thm}\label{thm:dmcNCShimura}
Let $\Shi\subseteq 
\cA_{g}
$ be an indecomposable, non-compact, special subvariety, and let $Z\subsetneq \Shi$ be an irreducible compact subvariety that is Hodge-generic within~$\Shi$. Then $\dim Z \le g-1$.
\end{thm}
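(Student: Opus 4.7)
The plan is to argue by contradiction using \Cref{cor:KU}. Suppose $X\subset\Shi$ is a compact Hodge-generic subvariety with $\dim X\ge g$; I will derive a contradiction by producing a diverging sequence inside $X$. First, observe that if $\dim\Shi\le g$, then since $X\subsetneq\Shi$ (because $X$ is compact and $\Shi$ is not), one automatically has $\dim X\le\dim\Shi-1\le g-1$, so I may assume $\dim\Shi>g$. Since $\Shi$ is indecomposable, its associated $\QQ$-algebraic group $G$ is $\QQ$-almost-simple, and the Hodge-genericity of $X$ forces the algebraic monodromy of $X$ to equal, up to finite index, $G^\der$, hence $\QQ$-simple; thus the monodromy hypothesis of \Cref{thm:KU} is verified.

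The heart of the argument is to exhibit a morphism of Shimura varieties $\Phi:\cA_1\times\Shi'_2\to\cA_g$ with $\dim\Shi'_2\ge\dim\cA_{g-1}=(g-1)g/2$ and $\Phi(\{y\}\times\Shi'_2)\cap\Shi\neq\emptyset$ for every $y\in\cA_1$. Condition~(a) of \Cref{cor:KU} is then equivalent to $\dim X\ge g$, condition~(b) holds by construction, and part~(ii), combined with the non-compactness of $\cA_1$, produces a diverging sequence inside $X$---contradicting its compactness. In the special case $\Shi=\cA_g$ (which already establishes \Cref{thm:dmcgAg}) one takes for $\Phi$ the product embedding $\cA_1\times\cA_{g-1}\hookrightarrow\cA_g$, and both conditions are trivial.

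For a general non-compact indecomposable $\Shi\subsetneq\cA_g$ the product embedding may violate~(b)---for instance, a primitive Hilbert modular variety, whose points correspond to simple abelian varieties with real multiplication, does not intersect $\cA_1\times\cA_{g-1}$ at all. To circumvent this I plan to use the non-compactness of $\Shi$ intrinsically: by \Cref{lm:aniso}, $G_\FF$ is isotropic, so $G$ contains a non-trivial $\QQ$-parabolic subgroup $P$; applying Jacobson--Morozov to a non-zero nilpotent element in the Lie algebra of the unipotent radical of~$P$ produces a $\QQ$-homomorphism $\iota:\SL_{2,\QQ}\to G$ which, for a suitably chosen compatible Shimura datum on~$\SL_2$, induces a non-compact modular curve $\cA_1\to\Shi$. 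Paired with the Shimura variety~$\Shi_H$ associated to a factor of the Levi of~$P$, this yields a morphism $\cA_1\times\Shi_H\to\Shi\hookrightarrow\cA_g$ to play the role of~$\Phi$, with~(b) automatic since $\Phi$ factors through~$\Shi$. The main obstacle is the dimension bookkeeping: one needs $\dim\Shi_H\ge(g-1)g/2$, which I expect to achieve by choosing $P$ to be \emph{maximal} among rational parabolics of~$G$ (corresponding to a codimension-one stratum of the Baily--Borel boundary of~$\Shi$), and using the structure theory of the symplectic representation $G\to\Sp_{2g}$ defining~$\Shi$ to control the resulting Levi.
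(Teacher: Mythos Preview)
Your overall strategy---contradict compactness via \Cref{cor:KU}(ii) applied to a map $\Phi:\cA_1\times\Shi'_2\to\cA_g$---matches the paper's, and your construction of the modular curve $\cA_1\to\Shi$ via a rational $\SL_2\to G$ is essentially \Cref{lm:modcurveinnoncompact}. But your choice of $\Shi'_2$ is wrong, and the ``main obstacle'' you flag is not a bookkeeping issue but a genuine failure.

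You propose $\Shi'_2=\Shi_H$ coming from a Levi factor of a $\QQ$-parabolic of $G$, so that $\Phi$ factors through $\Shi$. This forces $\dim\Shi_H<\dim\Shi$, yet condition~(a) of \Cref{cor:KU} requires $\dim\Shi_H\ge g(g-1)/2=\dim\cA_{g-1}$. There is no reason for $\dim\Shi$ to exceed $g(g-1)/2$; all you have arranged is $\dim\Shi>g$. For a concrete failure, take $\Shi\cong\cA_4$ embedded in $\cA_8$ via $A\mapsto A\times A$ (so $G=\Sp_8$, $g=8$, $\dim\Shi=10$): the largest Levi Shimura factor is $\cA_3$ of dimension~$6$, while you need $\dim\Shi_H\ge 28$. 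No choice of maximal parabolic, nor any analysis of the symplectic representation, can rescue this---your $\Shi'_2$ simply lives in too small an ambient variety.

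The paper's fix is exactly \emph{not} to factor $\Phi$ through $\Shi$. Instead (\Cref{lm:Eisog}) one composes your $\SL_2\to G$ with $G\to\Sp_{2g}$ and decomposes the resulting $2g$-dimensional $\SL_2$-representation: the weight constraint forces every summand to be standard or trivial, so each abelian variety on the modular curve is \emph{isogenous} to $E\times B$ with $E$ elliptic and $B\in\cA_{g-1}$. This produces $\Phi:\cA_1(N)\times\cA_{g-1}(N)\to\cA_g$ with $\Shi'_2=\cA_{g-1}(N)$ of full dimension $g(g-1)/2$, and condition~(b) holds because the modular curve, which sits inside $\Shi$, also meets every slice $\Phi([E]\times\cA_{g-1}(N))$. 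The large factor $\Shi'_2$ lives in $\cA_g$, not in $\Shi$; only the \emph{intersection} with $\Shi$ matters for~(b).
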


The first main result of the present paper is an immediate consequence.

\begin{proof}[Proof of \Cref{thm:dmcgAg}]
This is simply the $\Shi=
\cA_{g}
$ case of \Cref{thm:dmcNCShimura}.
\end{proof}

\begin{rem}
In fact, the proof of \Cref{thm:dmcNCShimura} in the special case $\Shi=
\cA_{g}
$ is very quick, and does not require any of the below technical \Cref{lm:modcurveinnoncompact}, \Cref{cor:modcurveinnoncompact} or \Cref{lm:Eisog}. Indeed, it implements the idea discussed in \Cref{sec:introA}, making use of \Cref{cor:KU}(ii)
with $\Shi'_1=
\cA_{1}
$ and $\Shi'_2=
\cA_{g-1}
$
and $\tau'=\iota:
\cA_{1}\times\cA_{g-1}\ra \iota(\cA_{1}\times\cA_{g-1})\subset\cA_{g}
$.
\end{rem}

The idea of the proof of \Cref{thm:dmcNCShimura}, for arbitrary non-compact $\Shi$, is to first
produce a special subvariety of $\cA_g$ as the image of
a morphism $\Phi:\Shi'_1\times\Shi'_2\rightarrow
\cA_{g}$ with 
$\Shi'_1=\cA_1(N)$ and $\Shi'_2=\cA_{g-1}(N)$ 
for some $N$
(where we recall that 
$\cA_g(N)$
denotes the coarse moduli of complex ppav with a full level $N$ structure) 
in such a way that
$\Phi(\{s'_1\}\times\Shi'_2)$ intersects $\Shi$ 
for $s'_1$ in a Zariski-dense open subset of $\Shi'_1$.
By \Cref{cor:KU}(ii), the compactness of $Z$ then forces $\dim Z\le g-1$.

The construction of $\Phi$ uses the following preliminary lemma, in which
we first exhibit a holomorphic map from a modular curve to a totally geodesic subvariety of $\Shi$.

\begin{lm}\label{lm:modcurveinnoncompact}
Let $\Sh_{\rK}(\rG,\dX)^+$ be a connected component of the Shimura variety
    $\Sh_{\rK}(\rG,\dX)$ and suppose that $\Sh_{\rK}(\rG,\dX)(\CC)$ is non-compact.
    Then there is a homomorphism of $\QQ$-groups $\phi:\SL_2\rightarrow \rG$ and a point $x\in \Sh_{\rK}(\rG,\dX)(\CC)^+$ such that $x\mapsto \phi(g)\cdot x$ induces a holomorphic map of the modular curve (with level) $\cA_1(N)\rightarrow \Sh_{\rK}(\rG,\dX)(\CC)^+$, for some large enough $N$, whose image is totally geodesic.
\end{lm}
\begin{proof}
The adjoint action of $\rG$ on its Lie algebra
determines a canonical variation of Hodge structure on 
$\Sh_{\rK}(\rG,\dX)(\CC)$
(see \cite[\S5]{milne2011shimura}). 
Since 
$\Sh_{\rK}(\rG,\dX)(\CC)$
is non-compact, the group $\rG$ contains nontrivial unipotent elements, and the variation associated to 
$\Sh_{\rK}(\rG,\dX)(\CC)$
degenerates. The conclusion now follows from \cite[Cor 5.19]{schmid}. (Technically, Schmid works only for period domains corresponding to the full orthogonal/symplectic group, but the argument goes through unchanged in our context). 
\end{proof}

We can now construct a morphism of Shimura varieties from a refined version of the lemma above:

\begin{cor}\label{cor:modcurveinnoncompact}
     Let $\Sh_{\rK}(\rG,\dX)^+$ be a 
     connected component of the Shimura variety $\Sh_{\rK}(\rG,\dX)$,
     and suppose that 
$\Sh_{\rK}(\rG,\dX)(\CC)$
is non-compact.
     Then there is an embedding of Shimura data $(\rH',\dY')\hra (\rG,\dX)$ 
     and a connected component $(\dY')^+$ of $\dY'$
     such that $(\rH')^{\ad}\cong \SL_2^{\ad}$
     and $(\dY')^+$ maps to 
$\Sh_{\rK}(\rG,\dX)(\CC)^+$.
\end{cor}

\begin{proof}
    By \Cref{lm:modcurveinnoncompact} there is a weakly special subvariety of 
$\Sh_{\rK}(\rG,\dX)(\CC)^+$ 
isomorphic to a modular curve.  By \Cref{thm:weaklyspecials} this means that there is an embedding $(\rH,\dY)\hra (\rG,\dX)$ and a splitting $$(\rH^\ad,\dY^\ad)=(\rH^{\ad}_1,\dY^{\ad}_1)\times (\rH^{\ad}_2,\dY^{\ad}_2)$$ such that the modular curve is the image of $\dY^{\ad,+}_1\times\{y_2\}\times\{\GA\}$ for some $y_2\in \dY^{\ad,+}_2$ and $\GA\in 
\rG(\AA_f)
$. It follows that $\dY^{\ad,+}_1$ is the standard upper half-plane, and $\rH_1^{\ad}\cong \SL_2^{\ad}$.

    Now we simply take a special point $y_{2,s}\in \dY^{\ad,+}_2$ corresponding to a homomorphism whose image is contained in 
    $\rT_\RR$ for
    a maximal $\QQ$-torus $\rT\subset \rH^{\ad}_2$, and define $\rH'\subset \rH$ as 
    the connected component of the preimage of $\rH^{\ad}_1\times \rT$ that contains the identity.

Pick any lift $y'$ of $(y_1,y_{2,s})$, which
thus factors through $\rH'$.
Letting $\dY'$ be the 
$\rH'(\RR)$-conjugacy class
of $y'$
yields an embedding of Shimura data $(\rH',\dY')\hra (\rH,\dY)\hra (\rG,\dX)$ such that $(\rH')^{\ad}\cong \SL_2^{\ad}$, as desired.
    Finally, the connected component of $\dY'$ that
    contains $y'$ maps to 
$\Sh_{\rK}(\rG,\dX)(\CC)^+$,
and so this
    is the desired $(\dY')^+$.
\end{proof}

We can now construct the desired morphism $\Phi$.

\begin{lm}\label{lm:Eisog}
    Let $\Shi\subseteq 
    \cA_{g} 
    $ be a non-compact special subvariety. Then there is a morphism of Shimura varieties $\Phi:\ASh_1(N)\times \ASh_{g-1}(N)\rightarrow \ASh_g$, and an irreducible component $\cA_1(N)^+$ such that for each $[E]\in
    \cA_1(N)^+
    $ the intersection $\Shi\cap \Phi\left([E]\times 
    \cA_{g-1}(N)
    \right)$ is nonempty.
\end{lm}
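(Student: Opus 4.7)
The plan is to invoke \Cref{lm:modcurveinnoncompact} to produce a modular curve $\psi: \cA_1(N_0) \to \Shi$ inside $\Shi$, and then recognize that $\psi$ factors through a product-type morphism of Shimura varieties, which will supply the desired $\Phi$.

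First I would apply \Cref{lm:modcurveinnoncompact} to produce a homomorphism $\phi: \SL_2 \to G$ (where $G$ is the algebraic group attached to $\Shi$) inducing a morphism of Shimura varieties $\psi: \cA_1(N_0) \to \Shi \subset \cA_g$. The composition $\SL_2 \xrightarrow{\phi} G \to \Sp_{2g}$ provides a symplectic $\QQ$-representation $V$ of $\SL_2$ of dimension $2g$, and since $V$ has Hodge type $\{(-1,0),(0,-1)\}$ and the only irreducible symplectic $\SL_2$-representation of this type is the standard one $V_{\mathrm{std}}$, we obtain $V \cong V_{\mathrm{std}} \otimes_\QQ U$ as $\QQ$-representations of $\SL_2$, for some $\QQ$-vector space $U$ of dimension $g$. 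The $\SL_2$-invariant symplectic form on $V$ then necessarily takes the shape $\omega_{V_{\mathrm{std}}} \otimes \eta$ for a non-degenerate \emph{symmetric} bilinear form $\eta$ on $U$ (the symmetry of $\eta$ is forced by the alternation of $\omega_V$ and $\omega_{V_{\mathrm{std}}}$).

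Next I would pick any orthogonal splitting $U = U_1 \perp U_2$ with $\dim U_1 = 1$ and $\dim U_2 = g-1$, which exists over $\QQ$ because the non-degeneracy of $\eta$ yields a vector $u \in U$ with $\eta(u,u) \neq 0$. Setting $V_i := V_{\mathrm{std}} \otimes U_i$ yields a symplectic orthogonal decomposition $V = V_1 \perp V_2$ of dimensions $2$ and $2g-2$, on which $\SL_2$ acts as $V_{\mathrm{std}}$ on $V_1$ and as the diagonal $V_{\mathrm{std}}^{\oplus(g-1)}$ on $V_2$. The inclusion $H := \SL(V_1) \times \Sp(V_2) \hookrightarrow \Sp(V)$ is a $\QQ$-homomorphism underlying a morphism of Shimura data, and hence induces a morphism of Shimura varieties $\Phi: \cA_1(N) \times \cA_{g-1}(N) \to \cA_g$ for any sufficiently divisible~$N$.

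Finally, I would observe that by construction $\phi$ factors as $\SL_2 \to H \hookrightarrow \Sp(V)$ via the map $g \mapsto (g,(g,\ldots,g)) \in \SL(V_1) \times \SL_2^{g-1} \subset H$, the second coordinate being the diagonal embedding $\SL_2 \hookrightarrow \SL_2^{g-1} \subset \Sp(V_2)$ associated to the decomposition $V_2 = V_{\mathrm{std}}^{\oplus(g-1)}$. Translating to the Shimura varieties (replacing $N_0$ by a multiple $N$ if necessary), this yields the factorization
\[
\cA_1(N) \xrightarrow{(\mathrm{id},\, \delta_{g-1})} \cA_1(N) \times \cA_{g-1}(N) \xrightarrow{\Phi} \cA_g,
\]
where $\delta_{g-1}: \cA_1(N) \to \cA_{g-1}(N)$ is the Shimura morphism $E \mapsto E^{g-1}$, and the composition agrees with $\psi$. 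Consequently, for each $[E] \in \cA_1(N)$ one gets $\Phi(E,\delta_{g-1}(E)) = \psi(E) \in \Shi$, proving the desired non-emptiness. The most delicate point in this plan is extracting the orthogonal $\QQ$-decomposition of $U$ compatibly with the rational symplectic structure on $V$, but this is handled entirely by standard facts on non-degenerate symmetric bilinear forms over $\QQ$.
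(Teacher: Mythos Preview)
Your argument follows the same route as the paper: invoke \Cref{lm:modcurveinnoncompact}, decompose the resulting $\SL_2$-representation $V$, split off a two-dimensional symplectic summand $V_1$, and let $\Phi$ be induced by the inclusion $\Sp(V_1)\times\Sp(V_2)\hookrightarrow\Sp(V)$. Your packaging is cleaner---you argue directly with the symmetric form $\eta$ on the multiplicity space $U$ and the factorization through $H=\SL(V_1)\times\Sp(V_2)$---whereas the paper phrases the same splitting in terms of isogenies of abelian varieties and polarizations.

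One point deserves attention. You assert that \Cref{lm:modcurveinnoncompact} yields a \emph{morphism of Shimura varieties} $\cA_1(N_0)\to\Shi$, and you use this when you argue ``$V$ has Hodge type $\{(-1,0),(0,-1)\}$, hence $V\cong V_{\mathrm{std}}\otimes U$''. As stated, \Cref{lm:modcurveinnoncompact} only promises a holomorphic orbit map $g\mapsto\phi(g)\cdot p$; for a general such map the Hodge cocharacter at $p$ need not factor through $\phi$ (e.g.\ take $G=\SL_2\times\SL_2$, $\phi$ the first inclusion, $p=(i,i)$), and then $V$ can acquire a trivial $\SL_2$-summand on which the Hodge structure is constant but nonzero. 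This is exactly why the paper allows $V=V'\oplus V''$ with $V''$ trivial and handles the Hodge splitting via the $\psi(K_p)$-isotypic decomposition rather than via Hodge type alone. In fact Schmid's $\SL_2$-orbit does give a genuine morphism of Hodge data, so your assumption is ultimately correct, but you should either justify it or simply allow a trivial summand $V''$: then $V=(V_{\mathrm{std}}\otimes U)\oplus V''$ with $\dim U=m\ge 1$, you split $U=U_1\perp U_2$ as before, set $V_2=(V_{\mathrm{std}}\otimes U_2)\oplus V''$, and the rest of your argument goes through verbatim.
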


\begin{proof}
Let $\iota:(\rG,\dX)\hra \SD_g=(\GSp(\psi),\dX(\psi))$ be the embedding of Shimura data, associated to a representation of $\rG$ on the $\QQ$-symplectic space $(V,\psi)$, and let $\dX^+$ be a connected component
of $\dX$,
that induce $\dX^+\twoheadrightarrow\Shi\subseteq
\cA_{g}
$.
Denote by $(\dX^{\ad})^+$ the component of $\dX^{\ad}$ which is the image of $\dX^+$.

By \Cref{cor:modcurveinnoncompact} there is an embedding of Shimura data $(\rH',\dY')\hra (\rG,\dX)$ 
and a component $(\dY')^+$ of $\dY'$
so that
$(\rH')^{\ad}\cong \SL^\ad_2$
and $(\dY')^+$ maps to $\dX^+$. This gives a smaller non-compact special subvariety in $\Shi$, and hence we may and do reduce to the case 
$(\rG,\dX)=(\rH',\dY')$ and so $\rG^{\ad}\cong \SL^{\ad}_2$. 

Without loss of generality, we may assume that 
the smallest
closed normal $\QQ$-subgroup of $\rG$
whose set of real points contains the image of each $x\in \dX$ is $\rG$ itself (otherwise, we could replace $\rG$
by such closed normal subgroup, which is also reductive).

Let $\rT=\rZ(\rG)^\circ$ be the connected
component of $\rZ(\rG)$ that contains the identity.
We therefore have an isogeny $f:\SL_2\times \rT\ra \rG$.

Let $j:\mathrm{U}_1\hra \SL_{2,\RR}$ be the embedding given by the standard identification
$\mathrm{U}_1\cong \SO_{2,\RR}$, and note that the composition $$\mathrm{U}_1\xrightarrow{j}\SL_{2,\RR}\hra \SL_{2,\RR}\times \rT_\RR\xrightarrow{f} \rG_\RR\ra \rG^{\ad}_\RR\cong \SL_{2,\RR}^{\ad}$$ agrees with the restriction of an element 
$x^{\ad}:\bS\ra \rG^{\ad}_\RR$ of 
$(\dX^{\ad})^+$.

From here on in the proof, we always consider $\rG$ as acting on $V_\CC$ via $\iota$ and $\SL_{2,\RR}\times \rT_\RR$ as
acting on $V_\CC$ via $\iota\circ f$.
In what follows we study the weights of certain actions of $\mathrm{U}_1$ on $V_\CC$.\\ 

{\it{Step 1: $\mathrm{U}_1$ acts on $V_\CC$ via $\iota\circ f\circ j$ with weights in $\{ 0,\pm 1\}$.}}

To prove this, consider any element $x\in \dX^+$ 
that projects to $x^{\ad}\in (\dX^{\ad})^+$.
Let $d>0$ be an integer such that $x\circ m_d$ lifts to a map $\widehat{x\circ m_d}:\bS\ra \SL_{2,\RR}\times \rT_{\RR}$ (where $m_d:\mathrm{U}_1\ra \mathrm{U}_1$ is the $d$'th power map). Denote by 
$\coch:\bS\ra \rT_\RR$ the projection of $\widehat{x\circ m_d}$ to the
second factor, and by
$\xi_d=(j\circ m_d,\coch_1):\mathrm{U}_1\ra \SL_{2,\RR}\times \rT_\RR$ the restriction of $\widehat{x\circ m_d}$ to $\mathrm{U}_1$. Note that $\coch$ and $\coch_1$ are independent of $x$.

We know, by inspecting a single element of $\dX(\psi)$,
that the restriction to $\mathrm{U}_1$ of any element 
$\bS\ra\GSp(\psi)$
of $\dX(\psi)$ acts on $V_\CC$ with weights $\{\pm 1\}$.
Since the composition $\iota\circ f\circ\xi_d$
is the restriction to $\mathrm{U}_1$ of
the $d$'th power of an element in $\dX(\psi)$, 
the action via $\xi_d$ has weights $\{\pm d\}$.

Now, the action $\xi_{-d}=(j\circ m_{-d},\coch_1)$
on $V_\CC$ is conjugate to $\xi_d$, and so it
also has weights $\{\pm d\}$.
(This follows from the fact that
$\Ad_J\circ j=j\circ m_{-1}:\mathrm{U}_1\ra\SL_{2,\CC}$,
where 
$J\in \SL_2(\CC)$
is the matrix whose entries are $0$ on the diagonal and $i$
away from the diagonal, and so
$\Ad_{\iota(f(J))(x|_{\mathrm{U}_1}}\circ\, m_d)=x|_{\mathrm{U}_1}\circ m_{-d}$.)

The homomorphisms $\xi_d$ and $\xi_{-d}$ commute, meaning that $\xi_d(z)\cdot \xi_{-d}(z)=\xi_{-d}(z)\cdot\xi_d(z)\in \SL_{2,\RR}\times \rT_\RR$.
 Thus the weights of 
 the action $\xi_d\cdot (\xi_{-d})^{-1}$
 are in $\{0,\pm 2d\}$, because they are
  the sum of the weights of the action $\xi_d$ and $\xi_{-d}$.
But now $\xi_d\cdot (\xi_{-d})^{-1}=(j\circ m_{2d},0)$,
and so $\mathrm{U}_1$ acts via $\iota\circ f\circ j$
with weights in $\{0,\pm 1\}$, as desired.\\

{\it{Step 2: Decomposing $V$ as an $\SL_2$-module.}}

Denoting $V_1$ the standard two-dimensional representation of $\SL_2$, recall that all irreducible representations of~$\SL_2$ are of the form $V_\ell\coloneqq \Sym^\ell V_1$ for some integer $\ell\ge 0$. Since the set of weights of $\mathrm{U}_1$ acting on $V_\ell$ contains $\ell$, we conclude that, as a representation of $\SL_2$ acting via $\iota\circ f$,  
an irreducible subrepresentation of $V$
must be isomorphic either to $V_0$ or to $V_1$.

We may therefore write $V=V'\oplus V''$ as a direct sum of $\QQ$-representations, where $V'$
(non-canonically) decomposes as a $\psi$-orthogonal sum $V'=\bigoplus_{i=1}^m V'_i$
with $V'_i\cong V_1$ as an $\SL_2$ representation, 
and $V''$ is a trivial representation. Note that $m\geq 1$, because the homomorphism 
$\SL_2\xrightarrow{f} \rG\xrightarrow{\iota} \GSp(\psi)$ is not constant. It is clear that $V',V''$ are both symplectic under $\psi$ and 
$\psi$-orthogonal to each other.\\

{\it{Step 3: $\rT$ acts on $V'$ via scalars.}}

First, since $\rT$ commutes with $\SL_2$, it must preserve $V'$ as an $\SL_2$-isotypic space. 
Next, in Step 2 we showed that the weights of 
$\xi_d$ and of $\xi_{-d}$ are precisely $\{\pm d\}$,
and that the weights of $j\circ m_d$ are contained in $\{0,\pm d\}$. 

Next, we want to show that $\coch_1$ acts on $V'_\CC$
with weight $0$. Let $k$ be a weight for the action of $\coch_1$ on $V'_\CC$.
Since $\coch_1$ and $j\circ m_d$ commute, we can
find $v\in V'_\CC$ that is an eigenvector
for $j\circ m_d$ of weight $d$,
and for $\coch_1$ of weight $k$.
Then $\xi_d$ and $\xi_{-d}$ act on $\CC v$ with weights $d+k$ and $-d+k$, respectively, which implies
that $k=0$, so that $\coch(\mathrm{U}_1)$ acts trivially on $V'_\CC$.

Moreover, by examining $\dX(\psi)$ we see that $\widehat{x\circ m_d}(\GG_{m,\RR})$ acts via scalars on $V_\CC$ and hence also on $V'_\CC$, and thus so does $\coch(\GG_{m,\RR})=\widehat{x\circ m_d}(\GG_{m,\RR})$. 
Since we assumed that $\dX$ does not factor through a 
closed normal
$\QQ$-subgroup of $\rG$ other than $\rG$ itself, we conclude that 
$\coch(\bS)$ is 
Zariski dense
in $\rT_\RR$. This implies that
$\rT_\RR$ acts on $V'_\CC$
via scalars, and so does $\rT$ on $V'$, 
which completes the proof of Step 3.\\

{\it{Step 4: construction of the morphism $\Phi$.}}

Finally, let $U=V'_1$ and $W=V''\oplus\left(\bigoplus_{i=2}^m V_i'\right)$. 
By Step 2 and Step 3 above, $\SL_2\times \rT$ acts on $V$
preserving the mutually orthogonal subspaces $U$ and $W$, and thus so does~$\rG$.
It follows that
$\rG\hra \GSp(\psi)$ factors as
$$\rG\hra \GSp(\psi|_U,\psi|_W)\hra \GSp(\psi),$$ and hence 
$$(\rG,\dX)\hra (\GSp(\psi|_U,\psi|_W),\dX(\psi|_U,\psi|_W))\hra (\GSp(\psi),\dX(\psi)).$$ This map of Shimura data yields a map of Shimura varieties 
$\Phi:\ASh_1(N)\times\ASh_{g-1}(N)\ra \ASh_g$ for sufficiently large $N$, and the proof is complete.
\end{proof}

We are now ready to prove the theorem, implementing the original idea. 
\begin{proof}[Proof of \Cref{thm:dmcNCShimura}]
Assume, for contradiction, that $Z\subsetneq \Shi$ is a compact Hodge-generic subvariety of $\Shi$, of dimension at least $g$. 

Consider the morphism $\Phi:\ASh_1(N)\times\ASh_{g-1}(N)\rightarrow\ASh_g$ 
and the component $\cA_1(N)^+$
constructed in \Cref{lm:Eisog}, so that for any $[E]\in\cA_1(N)^+$ the intersection $\Shi\cap \Phi\left([E]\times\cA_{g-1}(N)\right)$ is nonempty.

Since $\dim Z+\dim \cA_{g-1}(N)\ge \dim \cA_g$ and since $\cA_1(N)^+$ is non-compact, by \Cref{cor:KU}(ii) it follows that $Z$ must also be non-compact, contradicting our hypotheses.
\end{proof}

\subsection{Compact special subvarieties of 
$\cA_{g}$ determine $\dmc(\cA_{g})$
}\label{sec:mdsp}

Denote by $\mdsp(g)$ the {\it{maximal dimension of a compact special subvariety of 
$\cA_{g}$}}.
Since 
$\cA_{0}$
is a point, we have $\mdsp(0)=0$.
Moreover, if 
$\Shi_1\subseteq\cA_{g_1}$ and $\Shi_2\subseteq\cA_{g_2}$ 
are compact
special subvarieties of largest dimension, 
then $\Shi_1\times \Shi_2\subseteq 
\cA_{g_1}\times\cA_{g_2}\ra\cA_{g_1+g_2}
$
maps to a compact special subvariety of dimension $\dim \Shi_1+\dim \Shi_2$,
and so 
$$
\mdsp(g_1+g_2)\geq \dim \Shi_1+\dim \Shi_2=\mdsp(g_1)+\mdsp(g_2).
$$

Combining \Cref{prop:productShimura} and \Cref{thm:dmcNCShimura}
leads to the following exact expression for the maximal dimension of a compact subvariety of
$\cA_{g}$.

\begin{prop}\label{thm:finalestimate}
    The maximal dimension of a compact subvariety of 
    $\cA_{g}$
    is
    $$\dmc(
    \cA_{g}
    )= \max\left(\mdsp(g),\max_{0\le g'<g} \left(g-g'-1+\mdsp(g')\right)\right). $$
\end{prop}
\begin{proof}
For $g=0$ we have 
$\dmc(\cA_{0})=0$.
Assume now $g\geq 1$.

For every $g'\in[0,g-1]$
let $\Shi_{g'}\subseteq
\cA_{g'}
$ be a compact special subvariety of maximal dimension, that is $\dim \Shi_{g'}=\mdsp(g')$,
and let $Z_{g-g'}\subseteq
\cA_{g-g'}
$ a compact subvariety of dimension $g-g'-1$ (for example, a complete intersection). Then $\Shi_g$ and
$\Shi_{g'}\times Z_{g-g'}$ for $g'=0,\dots,g-1$
are compact subvarieties of 
$\cA_{g}$.
Thus $\dmc(
\cA_{g}
)\geq \max\left(\mdsp(g),\max_{0\le g'<g} \left(g-g'-1+\mdsp(g')\right)\right)$.
We have now to show that equality holds.

Let $Z\subsetneq
\cA_{g}
$ be an irreducible compact subvariety and let $\Shi\subseteq
\cA_{g}
$ be the smallest special subvariety containing~$Z$. Since 
$\dmc(\cA_{g})\ge\dmcg(\cA_{g})=g-1$,
it is enough to deal with the case $\dim Z\ge g$, in which case from \Cref{thm:dmcNCShimura} it follows that $\Shi\subsetneq
\cA_{g}$. 

If $\Shi$ is compact, then $\dim Z\leq \dim \Shi\leq \mdsp(g)$ and the theorem holds. 

So assume from now on that $\Shi$ is not compact and let $(\rG,\dX)\hra\SD_g$ be the embedding
of Shimura data associated to $\Shi$.
It follows that $(\rG^{\ad},\dX^{\ad})$ has a non-compact factor $(\rG^{\ad}_1,\dX^{\ad}_1)$.  Then we can apply \Cref{prop:productShimura} to conclude that $(\rG,\dX)\hra \SD_g$ must factor through $\SD_{g_1,g_2}$ for some positive integers $g_1+g_2=g$. 
As a consequence, $Z$ is the image of a compact subvariety of 
$\cA_{g_1}(N)\times\cA_{g_2}(N)$
via a morphism 
$\ASh_{g_1}(N)\times\ASh_{g_2}(N)\ra\ASh_{g}$
for some $N$, which is a Hecke translate of the standard map.

By induction we see that
\begin{align*}
    \dim Z&\leq 
    \dmc(\cA_{g_1}) + \dmc(\cA_{g_2})
    \\
    &\leq \max\! \Big(\mdsp(g_1)+\mdsp(g_2), \max_{g_1'<g_1}(g_1-g'_1-1+\mdsp(g'_1)+\mdsp(g_2)),\\&
    \phantom{\leq \max\Big(}
     \max_{g_2'<g_2}(g_2-g'_2-1+\mdsp(g_1)+\mdsp(g'_2)), \\ 
      & \phantom{\leq \max\Big(}
     \max_{\substack{g'_1<g_1, \ g_2'<g_2}}(g-g'_2-g'_1-2+\mdsp(g'_1)+\mdsp(g'_2) \ \Big) \\
    &\leq \max\left(\mdsp(g), \max_{0\le g'<g}(\mdsp(g')+g-g'-1)\right)\, ,\\ 
\end{align*}
as desired.
\end{proof}

Given \Cref{thm:finalestimate}, the only missing ingredient to
prove \Cref{thm:dmcAg} is to determine the 
maximal dimension of a compact
special subvariety of 
$\cA_{g}$,
whenever
this is at least $g-1$. \Cref{sec:decoupled} and \Cref{sec:non-decoupled} below will be dedicated to this task, and we first recall some more machinery for dealing with compact Shimura varieties.

\section{Compact factors in compact Shimura data}\label{sec:compact}

Let $\rG=\Res_{\KK/\QQ} \rH$ be a simple adjoint $\QQ$-group, 
where $\rH$ is a geometrically simple group over a totally real field $\KK$.

Note that the $\QQ$-group $\rG$ is anisotropic if and only if the $\KK$-group $\rH$ is anisotropic,
and recall that $\rG_\RR\cong \prod_{\si} \rH_{\si,\RR}$, where $\si$
ranges over all embeddings $\si:\KK\hra\RR$.

\begin{fact}
If $\rG_\RR$ has a compact factor $\rH_{\si,\RR}$ for some $\si$,
then $\rG$ is anisotropic.
\end{fact}
\begin{proof}
Since $\rH_{\si,\RR}$ is compact, it contains no nontrivial unipotent elements.
The same then holds for $\rH$, which is thus anisotropic, and so is $\rG$.
\end{proof}

In this section we study the following:

\begin{qu}\label{qu:compact}
Suppose that the $\QQ$-group $\rG$ is anisotropic. Must $\rH_{\si,\RR}$
be compact for some embedding $\si:\KK\hra\RR$?
\end{qu}

We shall restrict our analysis to 
the $\KK$-groups
$\rH=\Aut(V,\eta)^{\ad}$, where $V$ is a (finitely generated) left module for a division algebra $\DD$ with center $\LL$
endowed with an involution $*$
(satisfying $(\alpha\beta)^*=\beta^* \alpha^*$),
such that $\KK$ is the fixed field in $\LL$,
and $\eta$ is a non-degenerate form on $V$
that is $\epsilon$-Hermitian with respect to $*$ (where $\epsilon=\pm1$).

We say that {\it{$\eta$ represents $0$}} (or, equivalently,
that {\it{$\eta$ is isotropic}}) if
there exists $v\in V\setminus\{0\}$ such that $\eta(v,v)=0$.

\begin{rem}
As a motivation for \Cref{qu:compact}, we anticipate that
in \Cref{sec:decoupled} we will begin
to investigate the dimensions of compact special subvarieties $\Shi$ of 
$\cA_{g}$.
Each of these $\Shi$ will be induced by a symplectic representation
of a reductive $\QQ$-group. The basic, but most important case,
is when their adjoint groups are exactly those $\rG$
mentioned above.
To have a sharp estimate for the dimension of the special subvariety $\Shi$
we must know whether a factor $\rH_{\si,\RR}$ must necessarily be compact.
\end{rem}

We start with the following lemma which is certainly well-known, but we gather it for convenience of the reader:

\begin{lm}\label{lm:anisotropicif0}
 Let $\rH=\Aut(V,\eta)^{\ad}$ be a $\KK$-group as above, 
 and assume that $\rH$ is geometrically simple
 (and in particular nontrivial). Then $\rH$ is isotropic if and only if $\eta$ represents $0$.
\end{lm} 

\begin{proof}
Suppose first that $\rH$ is isotropic, so that there is a nontrivial unipotent element $h\in \rH(\KK)$.   
Since $h$ is unipotent, there exist two linearly independent vectors $v,w\in V$ such that $h(v)=v$ and  $h(w)=w+v$. But then 
    $0=\eta(h(w),h(v))-\eta(w,v)=\eta(v,v)$, and so $\eta$ represents $0$ as desired.

    Now suppose that $\eta$ represents $0$. Then there exists a nonzero vector $v\in V$ such that $\eta(v,v)=0$. Let $U=\DD v$, pick an element $w'\not\in U^{\perp}$, and by scaling assume that $\eta(w',v)=1$. Let $w=w'+dv$ with $d=-\eta(w',w')/2 \in \DD$, so that still $\eta(w,v)=1$.
    Noting that $d=\epsilon d^*$, we have $\eta(w,w)=\eta(w',w')+d+\epsilon d^*=0$.
Then since $\eta$ is non-degenerate on $W\coloneqq\mathrm{Span}(v,w)$, 
    it follows that $V\cong W\oplus W^{\perp}$ is an orthogonal decomposition. 
    
    Now we obtain a split 
$\KK$-torus
$\rT\subset \Aut(V,\eta)$ by considering
    the automorphisms of $V$ that
    rescale $w,v$ by $t,t^{-1}$ and leave $W^{\perp}$ fixed. 
    If $\rT$ is not contained in the center of $\Aut(V,\eta)$, then
    $\rH=\Aut(V,\eta)^{\ad}$ contains a nontrivial split torus and so is isotropic.

    Suppose now that $\rT$ is contained in the center of $\Aut(V,\eta)$.
Then every element of $\Aut(V,\eta)$ must preserve $\DD v$ and $\DD w$, and likewise any isotropic subspace. 
We claim that $W^{\perp}=\{0\}$. 
Indeed, for any $r\in W^{\perp}$ 
and for $d=-\eta(r,r)/2\in\DD$
the vector $w+r+dv$ is isotropic, and so each element of $\Aut(V,\eta)$ must preserve $\DD(w+r+dv)$. But our torus $\rT$ preserves $\DD(w+r+dv)$ only if $r=0$.

Hence $V\cong \DD^2$, and our form is  equivalent to $\eta((x_1,y_1),(x_2,y_2))=x_1\alpha y_2^*+\epsilon y_1\alpha^* x_2^*$ for some $0\neq \alpha\in \DD$. Now $(x,1)$ is isotropic whenever $(x\alpha)=-\epsilon (x\alpha)^*$. 
Since $\DD(x,1)$ is not preserved by our torus $\rT$ for any $x\neq 0$, the vector $(x,1)$ cannot be isotropic, and so the equation $(x\alpha)=-\epsilon(x\alpha)^*$ has no nonzero solutions.
This
means that $\epsilon =1$ and $*$ must be trivial. But in this case we have  $\DD=\KK$ and $\rH\cong \mathrm{O}^{\ad}_{2,\KK}$ is the trivial group. The proof is finished. 
\end{proof}

\subsection{Hasse principles}\label{subsec:Hasse}

Let $\KK$ be a totally real field and let $\rH=\Aut(V,\eta)^{\ad}$ be 
a simple adjoint $\KK$-group as at the beginning of \Cref{sec:compact},
with $\eta$ non-degenerate.

Consider the following properties of $\eta$:
\begin{itemize}
    \item[\LGI]
{\it{$\eta$ satisfies the local-to-global principle for isotropy}},
namely $\eta$ is isotropic if and only if $\eta$ becomes isotropic over every completion of $\KK$;
\item[\naI{}]
{\it{there is no non-archimedean obstruction to the isotropy of $\eta$}},
namely $\eta$ is isotropic over all non-archimedean completions of $\KK$.
\end{itemize}

If $\eta$ satisfies \LGI{} 
and \naI{} above, then the group $\rH$ satisfies the following
\begin{itemize}
    \item[\AnC{}]
$\rH$ is anisotropic if and only if there exists an embedding $\si:\KK\hra \RR$
such that $\rH_{\si,\RR}$ is compact.
\end{itemize}

In light of \Cref{lm:anisotropicif0}, to check whether a reductive group 
with adjoint group
$\rH=\Aut(V,\eta)^{\ad}$ is anisotropic it is sufficient to check whether $\eta$ represents $0$. We thus record Hasse principles for all types of forms, summarizing what is in \cite{scharlau}, thus providing an almost complete answer to \Cref{qu:compact}.

    \begin{itemize}
        \item[(q)]
        Let $V$ be a vector space over $\KK$ and let $\eta$ be a symmetric bilinear form on $V$.
        Then $\eta$ satisfies \LGI{} above,
see \cite[6.6.5]{scharlau}. 
        Moreover, if $\dim V\geq 5$, then $\eta$ also satisfies \naI{},
see \cite[6.6.6(vii)]{scharlau}.
        
        This implies that {\it{property \AnC{} holds for $\SO(V,\eta)$,
        if $V$ is a vector space over $\KK$ of $\dim_{\KK}V\geq 5$
and        $\eta$ is a symmetric bilinear form on $V$}}.\\

\item[(uF)] Let $\LL/\KK$ be 
a quadratic extension with involution $*$.
Let $V$ be a vector space over $\LL$
and let $\eta$ be a Hermitian form on $V$ with respect to $*$.
Then $\eta$  is isotropic if and only if its trace bilinear form is (see also \cite[10.1.1]{scharlau}).
        Hence we can reduce to case (q) above. So $\eta$
        satisfies \LGI{}.       
Moreover, if $\dim_{\LL}V\geq 3$ (which means $\dim_{\KK}V\geq 6$)
        then $\eta$ also satisfies \naI{}.

        This implies that {\it{property \AnC{} holds for
        $\SU(V,\eta)$, if $V$ is a vector space over $\LL$ of $\dim_{\LL}V\geq 3$
        and $\eta$ is a Hermitian form on $V$}}.\\

          \item [(uH)] Let $\HH$ be a quaternion division algebra over $\KK$ with canonical involution $*$, and let $(V,\eta)$ be a Hermitian space over $(\HH,*)$.
        This case is treated similarly to (uF), see also \cite[10.1.8(iii)]{scharlau}: 
        the form $\eta$ is isotropic if and only if its
        trace form is, and so it satisfies \LGI{}.
        Moreover, if $\dim_{\HH}V\geq 2$ (which means $\dim_{\KK}V\geq 8$),
        then $\eta$ also satisfies \naI{}.

 This implies that {\it{property \AnC{} holds for
        $\SU(V,\eta)$, if $V$ is a vector space over $\HH$ of $\dim_{\HH}V\geq 2$
        and $\eta$ is a Hermitian form on $V$}}.\\

        \item [(uD)] Let $\LL/\KK$ be a quadratic extension, and let $\DD$ be a non-commutative division algebra over $\LL$ with an involution $*$ extending 
        the nontrivial automorphism of $\LL$. Let $(V,\eta)$ be a Hermitian space over $(\DD,*)$.
        Then $\eta$ satisfies \LGI{}, see \cite[10.6.2]{scharlau}.
Moreover, over a non-archimedean prime, for every dimension, 
there are only two classes of non-degenerate
Hermitian forms, and they are distinguished by their determinant \cite[end of page 275]{scharlau}.
It follows that, if $n=\dim_{\DD} V\geq 3$, then $(V,\eta)$ is isometric to
$(\DD^n,\eta')$, where $\eta'$ is the Hermitian form induced by the diagonal matrix
with 
diagonal
entries
$(1,-1,-a,1,1,\dots,1)$, and $a$ is the determinant
of the matrix that represents $\eta$ with respect to any basis of $V$.
Since $\eta'$ is manifestly isotropic, we conclude that $\eta$ satisfies \naI{} whenever $\dim_{\DD}V\geq 3$.

        This implies that {\it{property \AnC{} holds for
        $\SU(V,\eta)$, if $V$ is a vector space over $\DD$ of $\dim_{\DD}V\geq 3$ and
        $\eta$ is a Hermitian form on $V$}}.      \\

        \item [(skH)] 
        Let $\HH$ be a quaternion division algebra over $\KK$ with canonical involution $*$, and let $(V,\eta)$ be a skew-Hermitian space over $(\HH,*)$. 

        If $\dim_\HH V\geq 3$, then $\eta$ satisfies property \LGI{}, see \cite[10.4.1]{scharlau}.
        If $\dim_\HH V\geq 4$, then $\eta$ also satisfies property \naI{}, see \cite[10.3.6]{scharlau}.

This implies that {\it{property \AnC{} holds for
        $\SU(V,\eta)$, if $V$ is a vector space over $\HH$ of $\dim_{\HH}V\geq 4$ and
        $\eta$ is a skew-Hermitian form on $V$}}.  
\end{itemize}

\section{Decoupled embeddings into Siegel Shimura data}\label{sec:Satake}

The purpose of this section is to recall Satake's classification of embeddings 
of Shimura data
into Siegel Shimura data, whose associated complex
representations are particularly simple, namely such that they satisfy
the decoupling condition of \Cref{dfx:decoupled} below
(which is precisely
Condition (9) in \cite[Section 7]{satake:analytic}).

Such classification is of fundamental importance:
indeed, in \Cref{sec:decoupled} and \Cref{sec:non-decoupled}
we will see that compact special subvarieties
of 
$\cA_{g}$
of maximal dimension (assuming that this is at least $g-1$) are associated
to decoupled representation.

\subsection{Embedding of Hermitian symmetric spaces}

Given an embedding $(\rG,\dX)\hra (\GSp(\psi),\dX(\psi))$ of Shimura data with associated
symplectic representation $V$, we may write $\wt{G}_\RR=\prod_{i=1}^k \wt{\rG}_i$ as a product of simple real groups, and ask for the irreducible representations $U_i$ of 
$\wt{\rG}_{i,\CC}$ that occur as $\wt{\rG}_{i,\CC}$-invariant subspaces of $V_\CC$.
Note that each non-compact $\wt{\rG}_i$ gives rise to a Hermitian symmetric space $\dX_i$. Since
$\dX$ is the product of such $\dX_i$,
its dimension equals the sum of the dimensions
of the $\dX_i$'s. 
 The pairs $(\wt{\rG}_i,U_i)$ have been classified 
 in \cite{satake-compact}.
In \Cref{table:milneslist}, we present all the possible non-compact $\wt{\rG}_i$, only recording the groups up to isogeny.

\begin{table}[H]
\tiny
\begin{tabular}{ | c | c | c |c|}
  \hline  \textbf{Non-compact} & \textbf{Hermitian} & \textbf{Repr.} & \textbf{Non-self-dual/}\\
\textbf{real group $\wt{\rG}_i$} & \textbf{symmetric} &  &  \textbf{/Symplectic/}\\
\textbf{(up to isogeny)} & \textbf{space $\dX_i$} &  $\dim U_i$ & \textbf{/Orthogonal}\\
  \hline\hline 
$\SL_{2}$ & $\begin{array}{c}\SL_2(\RR)/\SO_2(\RR)
  \\
  \dim=1
  \end{array}$  & $2$ & Symp\\  
\hline\hline
$\begin{array}{c}\\
\SU_{p,n-p}\\
n\geq 3,\quad 1\le p\le \frac{n}{2}
\end{array}$ & 
$\begin{array}{c}
\SU_{p,n-p}/\mathrm{S}(\mathrm{U}_p\times\mathrm{U}_{n-p})
  \\
  \dim=p(n-p)
  \end{array}$ & $n$ & NSD \\  
\hline
$\SU_{1,n-1}$, $n\geq 3$ & 
$\begin{array}{c}
\SU_{1,n-1}/\mathrm{S}(\mathrm{U}_1\times\mathrm{U}_{n-1})\\
\dim=n-1
\end{array}$
& 
$\begin{array}{c}\\
\displaystyle\binom{n}{c}
\\
\\
c\in [2,n-2]
\end{array}$
& $\begin{cases} c\neq \frac{n}{2} & \textrm{NSD}\\ c=\frac{n}{2}\textrm{ even} & \textrm{Orth }\\  c=\frac{n}{2}\textrm{ odd} & \textrm{Symp }  \end{cases}$\\  
\hline
\hline
$\SO^*_{2r}$, $r\geq 5$ & 
$\begin{array}{c}
\SO^*_{2r}(\RR)/\mathrm{U}_r\\
\dim=\frac{r(r-1)}{2}
\end{array}$
& $2r$ & Orth\\
\hline 
\hline  
$\Sp_{r}$, $r\geq 2$ & $\begin{array}{c}
\Sp_{r}(\RR)/\mathrm{U}_r\\
\dim=\frac{r(r+1)}{2}
\end{array}$ & $2r$ & Symp\\
\hline
\hline  
$\SO_{2p-2,2}$, $p\geq 3$ 
& 
$\begin{array}{c}
\SO_{2p-2,2}/\mathrm{S}(\mathrm{O}_{2p-2}\times\mathrm{O}_2)\\
\dim=2p-2
\end{array}$ & $2^{p-1}$ & $\begin{cases}p\equiv 2(4) & \textrm{Symp}\\ p\equiv 0(4) & \textrm{Orth}\\  p\equiv 1,3(4) &  \textrm{NSD}\end{cases}$\\
\hline 
$\SO_{2p-1,2}$, $p\geq 2$ & 
$\begin{array}{c}
\SO_{2p-1,2}/\mathrm{S}(\mathrm{O}_{2p-1}\times\mathrm{O}_2)\\
\dim=2p-1
\end{array}$ & $2^p$ & $\begin{cases}p\equiv 0,3(4) &\textrm{Orth}\\ p\equiv 1,2(4) & \textrm{Symp}\end{cases}$\\ 
\hline
\end{tabular}\\

\medskip

\caption{Irreducible Hermitian symmetric spaces of non-compact type that map nontrivially to some Siegel upper half-space}\label{table:milneslist}
\end{table}

\subsection{Decoupled embeddings of Shimura data}

In this subsection we focus on embeddings
of Shimura data into some Siegel Shimura datum
associated to symplectic representations 
that are decoupled
(in the sense of \Cref{dfx:decoupled} below).
We will describe an almost complete classification
of such embeddings obtained by Satake in \cite[Section 8]{satake:analytic}
and we condense it in  \Cref{table:Satakelist}, which
can be viewed a refinement of \Cref{table:milneslist}.
While \Cref{table:milneslist} will suffice for most of our purposes, it will turn out that for case (I) in \Cref{sec:best},
which is the most important for us,
we will need \Cref{table:Satakelist}

\begin{dfx}\label{dfx:decoupled}
A $\CC$-representation of a complex reductive group $\rH$
is called \textit{decoupled} if every irreducible subrepresentation of $\rH^{\der}$
factors through a simple quotient of $\rH^{\der}$.
A $\QQ$-representation $V$ of a reductive $\QQ$-group $\rG$
is called decoupled if $V_{\CC}$ is a decoupled representation of $\rG_{\CC}$.
\end{dfx}

In this section we will focus on the following problem.

\begin{qu}\label{qu:satake}
    Consider all embeddings of $(\rG,\dX)$ 
    into Siegel Shimura data corresponding to
    a $\GSp$-irreducible decoupled representation $V$,
    and assume that $\dim \dX>0$.
    Which $\QQ$-groups $\rG$ and
    which irreducible linear subrepresentations of $V_\CC$
    occur?
\end{qu}

A complete answer to \Cref{qu:satake}
was provided by Satake in 
\cite[Section 8]{satake:analytic}, building upon
his previous work \cite{satake:imb}.
See also 
\cite[Section 1.3]{deligne}, \cite[Section IV.6]{satake:book} and \cite[Chapter 10]{milne2005}.\\

For the rest of this section we consider an embedding of
$(\rG,\dX)$ into some Siegel Shimura datum
corresponding to an $\GSp$-irreducible decoupled representation $V$,
as in the hypotheses of \Cref{qu:satake}.

Note preliminarily that the restriction of $V$
to $\rG^{\der}$ must factor through a $\QQ$-simple
quotient of $\rG^{\der}$. Hence
the groups $\rG$ appearing
in \Cref{qu:satake} must have
$\rG^{\der}$ that is $\QQ$-simple.

Hence, by \Cref{cor:totallyrealsplit} we may write $\wt{\rG}\cong \rH_{\KK/\QQ}$ for a totally real field $\KK$ and a geometrically simple $\rH$ over $\KK$. Since $V$ is decoupled and irreducible we also have $V\cong W_{\KK/\QQ}$ for a $\GSp$-irreducible $\KK$-representation $W$ of $\rH$. 
We remark that,
by \Cref{cor:totallyrealsplit},
the simple factors of $\wt{\rG}_\RR$
are isomorphic to $\rH_{\si,\RR}\coloneqq \rH\otimes_{\KK,\si}\RR$,
for some embedding $\si:\KK\hra \RR$.
Thus \Cref{table:Satakelist} is a refinement of \Cref{table:milneslist}.

Let $U\subset W_\CC$ denote an irreducible linear subrepresentation 
of $\rH_\CC$.
Below we first list the various simple groups $\rH^{\der}$ that can occur (up to isogeny), and then summarize that information in a table, together with the possibilities for $U$. 
As for our conventions, we denote
\begin{itemize}
\item
by $\KK$ a totally real number field;
\item 
by $\LL$  a totally imaginary quadratic extension of $\KK$;
 \item
by $\DD$ a division algebra of degree $\delta$ over $\LL$ with involution $*$, 
with fixed field $\KK$ inside $\LL$;
\item 
by $\HH$ a quaternion division algebra over $\KK$ equipped with its canonical involution; 
\item 
by $\eta$ is a non-degenerate quadratic form on $\DD^m$ or $\HH^m$, or $\KK^m$, which may be symmetric, alternating,  Hermitian, or skew-Hermitian.
\end{itemize}

Note that, upon identifying $\DD\otimes_{\LL}\CC$ with
$\mathcal{M}_{\delta,\delta}(\CC)$, we obtain $\CC$-linear projections $\DD\otimes_{\LL}\CC\ra \CC^\delta$
(for example, taking the first column of the matrix).
Thus we also obtain projections
$\DD^r\otimes_\LL\CC\ra \CC^\delta\otimes\CC^r$ that are  $\SU(\DD^r,\eta)$-equivariant, and so $\SU(\DD^r,\eta)$
has a {\it{standard}} representation on $\CC^\delta\otimes\CC^r$.

\begin{table}[H]
\tiny
\resizebox{1 \textwidth}{!}{
\begin{tabular}{ |c|c | c  | c |c|c|}
  \hline 
  &\textbf{Group $\rH^{\der}$} & \textbf{Non-compact}  &  
  \textbf{Irreducible}
  & \textbf{Non-self-dual/}\\
 &\textbf{over $\KK$} & \textbf{Hermitian} &  
 \textbf{complex linear} &   \textbf{/Symplectic/}\\
  &\textbf{(up to isogeny)} & \textbf{symmetric space}  & 
  \textbf{representation $U$} & \textbf{/Orthogonal}\\
  \hline\hline 
\begin{tabular}{c}
  (A$_1$)
\end{tabular} &  
  \begin{tabular}{c}
  $\textrm{Nm}_1(\HH)$\end{tabular} & 
  \begin{tabular}{c}
  $\SU_{1,1}/\mathrm{S}(\mathrm{U}_1\times\mathrm{U}_{1})$ \\ $\dim=1$
  \end{tabular} & 
  \begin{tabular}{c} (dual) standard $\CC^2$\\ $\dim=2$\end{tabular} &
  Symp\\  
\hline 
\hline
  \begin{tabular}{c}
  (D$_4$)
\end{tabular} &  
  \begin{tabular}{c}
  ??? \\

  \end{tabular} & 
  \begin{tabular}{c}
  $\SO^*_8/\mathrm{U}_4=\SO_{6,2}/(\SO_6\times\SO_2)$
 \\ $\dim=6$
  \end{tabular} & 
  \begin{tabular}{c} standard \\ $\dim=8$\end{tabular} &
  Orth\\  
\hline \hline
\begin{tabular}{c}
  (I)$_{p,r\delta-p}$\\
  $r\delta\geq 3$
\end{tabular} &  
  \begin{tabular}{c}
  $\SU(\DD^r,\eta)$\\ $\eta$ Hermitian\end{tabular} & 
  \begin{tabular}{c}
  $\SU_{p,r\delta-p}/\mathrm{S}(\mathrm{U}_p\times\mathrm{U}_{r\delta-p})$ \\ $\dim=p(r\delta-p)$
  \end{tabular} & 
  \begin{tabular}{c} (dual) standard $\CC^{r\delta}$\\ $\dim=r\delta$\end{tabular} &
  NSD\\  
\hline 
\begin{tabular}{c}
  (I')$_{r\delta-1,1}^c$\\
  $r\delta\geq 4$\\
  $2\leq c\leq r\delta-2$\\
\end{tabular} & 
\begin{tabular}{c}
$\SU(\DD^r,\eta)$\\ $\eta$ Hermitian\end{tabular} & 
\begin{tabular}{c}
$\SU_{r\delta-1,1}/\mathrm{S}(\mathrm{U}_{r\delta-1}\times\mathrm{U}_1)$ \\
$\dim=r\delta-1$
  \end{tabular}  & \begin{tabular}{c}$\Lambda^c\CC^{r\delta}$ \\ 
  $\dim=\binom{r\delta}{c}$\end{tabular}
& $\begin{cases} c\neq \frac{r\delta}{2} & \textrm{NSD}\\ c=\frac{r\delta}{2}\textrm{ even} & \textrm{Orth }\\  c=\frac{r\delta}{2}\textrm{ odd} & \textrm{Symp }  \end{cases}$\\  
\hline
\hline \begin{tabular}{c}
  (II)$_r$ \\ 
  $r\geq 2$ with $r\neq 4$ \end{tabular} &
\begin{tabular}{c}
$\SU(\HH^r,\eta)$ \\
$\eta$ skew-Hermitian
\end{tabular}
& \begin{tabular}{c}
$\SO^*_{2r}/\mathrm{U}_r$\\
$\dim=\frac{r(r-1)}{2}$
\end{tabular}
&
\begin{tabular}{c}
standard\\
$\dim=2r$
\end{tabular} 
&
Orth
\\
\hline \hline
\begin{tabular}{c}
  (III.1)$_r$ 
  \\ $r\geq 2$
\end{tabular} &
\begin{tabular}{c}
$\Sp(\KK^{2r},\eta)$ \\
$\eta$ symplectic
\end{tabular}
& \begin{tabular}{c}
$\Sp_{r}/\mathrm{U}_r$\\
$\dim=\frac{r(r+1)}{2}$
\end{tabular}
&
\begin{tabular}{c}
standard\\
$\dim=2r$
\end{tabular} &
Symp
\\
\hline \begin{tabular}{c}
  (III.2)$_r$ 
    \\ $r\geq 2$
\end{tabular} &
\begin{tabular}{c}
$\SU(\HH^r,\eta)$ \\
$\eta$ Hermitian
\end{tabular}
& \begin{tabular}{c}
$\Sp_r/\mathrm{U}_r$\\
$\dim=\frac{r(r+1)}{2}$
\end{tabular}
&
\begin{tabular}{c}
standard\\
$\dim=2r$
\end{tabular} &
Symp
\\
\hline \hline
\begin{tabular}{c}
  (IV.1)$_{2p}$ 
  \\ $p\geq 3$, $p\neq 4$
\end{tabular} &
\begin{tabular}{c}
$\SO(\KK^{2p},\eta)$ \\
$\eta$ symmetric
\end{tabular}
& \begin{tabular}{c}
$\SO_{2p-2,2}/(\SO_{2p-2}\times\SO_2)$\\
$\dim=2p-2$
\end{tabular}
&
\begin{tabular}{c}
spin${^\pm(W)}$, $W$ isotropic\\
$\dim=2^{p-1}$
\end{tabular} 
&
$\begin{cases}p\equiv 2(4) & \textrm{Symp}\\ p\equiv 0(4) & \textrm{Orth}\\  p\equiv 1,3(4) &  \textrm{NSD}\end{cases}$
\\
\hline
\begin{tabular}{c}
  (IV.1)$_{2p+1}$ 
     \\ $p\geq 2$
\end{tabular} &
\begin{tabular}{c}
$\SO(\KK^{2p+1},\eta)$ \\
$\eta$ symmetric
\end{tabular}
& \begin{tabular}{c}
$\SO_{2p-1,2}/(\SO_{2p-1}\times\SO_2)$\\
$\dim=2p-1$
\end{tabular}
&
\begin{tabular}{c}
spin${(W)}$, $W$ isotropic\\
$\dim=2^p$
\end{tabular} 
&
$\begin{cases}p\equiv 0,3(4) &\textrm{Orth}\\ p\equiv 1,2(4) & \textrm{Symp}\end{cases}$
\\
\hline \begin{tabular}{c}
(IV.2)$_{2r}$ 
\\ $r\geq 3$ with $r\neq 4$
\end{tabular} &
\begin{tabular}{c}
$\SU(\HH^r,\eta)$ \\
$\eta$ skew-Hermitian
\end{tabular}
& \begin{tabular}{c}
$\SO_{2r-2,2}/(\SO_{2r-2}\times\SO_2)$\\
$\dim=2r-2$
\end{tabular}
&
\begin{tabular}{c}
spin$^{\pm}{(W)}$, $W$ isotropic\\
$\dim=2^{r-1}$
\end{tabular} 
&
$\begin{cases}r\equiv 2(4) & \textrm{Symp}\\ r\equiv 0(4) & \textrm{Orth}\\  r\equiv 1,3(4) &  \textrm{NSD}\end{cases}$
\\
\hline \end{tabular}
}\\

\medskip

\caption{Satake's classification} \label{table:Satakelist}
\end{table}

Satake's list (see \Cref{table:Satakelist}) is then the following:
\begin{itemize}
    \item[(A$_1$)]
    As we will treat them separately, we isolate out all groups of type A$_1$. They all arise as the norm 1 units in a quaternion algebra $\HH$ over $\KK$, and $U\cong\CC^2$ is the standard representation of $\SL_{2,\CC}$.
    \item[(I)]
    $\rH^{\ad}=\SU(\DD^r,\eta)^{\ad}$
    with $\eta$ Hermitian (so that $\rH_\RR^{\ad}=\SU_{p,r\delta-p}^{\ad}$),
    and $U\cong \CC^\delta\otimes\CC^r$ is the standard
    representation or its dual
\item[(I')]
$\rH^{\ad}=\SU(\DD^r,\eta)^{\ad}$
    with $\eta$ Hermitian with  
    $\rH_\RR^{\ad}=\SU_{1,r\delta-1}^{\ad}$, and $U\cong \Lambda^c(\CC^\delta\otimes\CC^r)$ is the
    $c$'th exterior power of the standard representation, or its dual
    (the case with reversed signature is analogous)
\item[(II)]
$\rH^{\ad}=\SU(\HH^r,\eta)^{\ad}$ 
with $\eta$ skew-Hermitian
(so that $\rH_\RR^{\ad}=(\SO^*_{2r})^{\ad}$,
where $\SO^*_{2r}=\SO_{2r,\CC}\cap \mathrm{U}_{r,r}$),
and $U\cong \CC^2\otimes\CC^r$ is the standard representation
or its dual
\item[(III.1)]
$\rH^{\ad}=\Sp(\KK^{2r},\eta)^{\ad}$ with $\eta$ symplectic
(so that $\rH_\RR^{\ad}=\Sp_r(\RR)^{\ad}$),
and $U\cong\CC^{2r}$
\item[(III.2)]
$\rH^{\ad}=\SU(\HH^r,\eta)^{\ad}$ with 
$\eta$ Hermitian (so that $\rH_\RR^{\ad}=\Sp_r^{\ad}$),
and $U\cong\CC^2\otimes\CC^r$ is standard
\item[(IV.1)]
$\rH^{\ad}=\SO(\KK^r,\eta)^{\ad}$ with $\eta$ symmetric
of signature $(2,r-2)$
(so that $\rH^{\ad}_\RR=\SO_{2,r-2}^{\ad}$);
the representation is respectively the 
spin representation $U\cong \Lambda L$ or its dual
for $r$ odd, or the odd/even spin representation
$U\cong \Lambda^{\pm} L$ or its dual for $m$ even,
where $L\subset (\RR^{2,r-2}\otimes_\RR\CC)$ is a certain maximal isotropic subspace
(the case with reversed signature is analogous)
\item[(IV.2)]
$\rH^{\ad}=\SU(\HH^r,\eta)^{\ad}$  with $\eta$ skew-Hermitian and
$\rH^{\ad}_\RR$ isomorphic to $\SO_{2,2r-2}^{\ad}$;
the representation $U$ is as in case (IV.1).
\item[(D$_4$)] 
$\rH^{\ad}$ is of $D_4$-type. Here the situation is more complicated and Satake does not give a classification, but this will not be necessary for us.
\end{itemize}

In what follows we will sometimes
consider the cases (I)$_{p,r\delta-p}$ as
subcases of the same case (I)$_n$ with $n=r\delta$
for all $p$,
and the cases (I')$^c_{r\delta-1,1}$
as subcases of the same case (I')$_n$ for all $c$.

\section{Decoupled Achievable pairs}\label{sec:decoupled}

In this section we study compact special subvarieties of 
$\cA_{g}$ 
associated to decoupled symplectic
representations and we determine those of largest dimension (provided that such dimension
is at least $g-1$). The key ingredients will be
Satake's classification recalled in \Cref{sec:Satake} and the Hasse principles 
illustrated in \Cref{sec:compact}.

\begin{dfx}
A pair $(d,g)$ is \textit{achievable} if
there exists a symplectic representation $(V,\psi)$ of
a compact Shimura datum $(\rG,\dX)$
such that $\dim V=2g$ and $\dim \dX=d$.
If moreover $V$ is decoupled as a $G$-representation, then the pair $(d,g)$ is said \textit{d-achievable}.
If $V$ is $\GSp$-irreducible, then the pair $(d,g)$ is
{\it{indecomposably (d-)achievable}}.
\end{dfx}

Equivalently, $(d,g)$ is achievable if 
$\cA_{g}$
contains a compact special subvariety $\Shi$ of dimension $d$,
and it is indecomposably achievable if 
the embedding $(\rG,\dX)\hra\SD_g$ of Shimura data
associated to $\Shi$ is indecomposable.

We first remark the following.

\begin{lm}\label{lm:g+1}
    If $(d,g)$ is (d-)achievable,
    then $(d,g+1)$ is (d-)achievable too.
    \end{lm}
    \begin{proof}
        Let $\iota:(\rG,\dX)\hra\SD_g$ be an
    embedding of a compact Shimura datum $(\rG,\dX)$
    with $d(\rG)=d$,
associated to an irreducible decoupled symplectic representation $V$.
Let $\tau\in\Sieg^{\pm}_1$ be (a lift of) a special point of 
$\cA_{1}$
and denote by $\rT_\tau\subsetneq\GSp_2$ the stabilizer of $\tau$. If $\rG'=\{(t,g)\in \rT_\tau\times \rG\,|\,\chi(t)=\chi(\iota(g))\}$ and 
$\dX'=\{\tau\}\times \dX$,
then $(\rG',\dX')$ is a compact Shimura datum, that
comes naturally with
an embedding into $\SD_{1,g}\hra \SD_{g+1}$.
Finally note that, if $V$ is decoupled, then the
symplectic representation $V'=\QQ^2\oplus V$ associated
to $(\rG',\dX')$ is decoupled too.
\end{proof}

We recall that there exist (Hodge-generic) $d$-dimensional compact subvarieties of 
$\cA_{g}$
for every $d<g$. Hence we call a pair $(d,g)$ {\em negligible} if $d<g-1$.
Moreover, we introduce a  partial ordering on $\NN\times\NN$ by declaring
   $$(d,g)\preceq (d', g')\iff  d\le d'\ {\rm  and}\ g\ge g',$$
If $(d,g)\preceq(d',g')$, we will say that $(d,g)$ is {\em dominated} by $(d',g')$; 
in this case, $(d,g)$ is {\it{strictly dominated}} by $(d',g')$ if furthermore $d<d'$.

\subsection{Dominating indecomposably d-achievable pairs}\label{ssc:key}

For this subsection we assume that 

\begin{itemize}
    \item 
$(V,\psi)$ is an irreducible, decoupled, symplectic representation of $(\rG,\dX)$, with $\rG^{\ad}$ anisotropic. 
\end{itemize}

We are interested in achievable pairs $(d,g)$ that can be obtained from such symplectic representations. 

Replacing $\rG$ by its image in $\GSp(\psi)$, we can assume that 
$\wt{\rG}$
is $\QQ$-simple. It follows that $\wt{\rG}\cong \rH_{\KK/\QQ}$ 
for some totally real field $\KK$
and some geometrically simple $\KK$-group $\rH$,
and that the representation $V|_{\wt{\rG}}$, obtained from $V$ by restricting to $\rG^{\der}$
and then composing with $\wt{\rG}\ra \rG^{\der}$,
is of type
$V|_{\wt{\rG}}\cong \Res_{\KK/\QQ} W$ for some $\GSp$-representation $W$ over $\KK$ of $\rH$.

Moreover,
$\wt{\rG}_\RR=\prod_{i=1}^k \wt{\rG}_i$,
where $\wt{\rG}_i\coloneqq \rH_{\si_i,\RR}$ for 
$\{\si_1,\dots,\si_k\} = \Hom(\KK,\RR)$,
and all $\wt{\rG}_i$ are $\RR$-forms of the same complex group $\rH_{\CC}$.
Consequently, being $V$ decoupled, its complexification decomposes as $V_\CC=\oplus_{i=1}^k V_i$, where the action of 
$\wt{\rG}_\CC$
on each~$V_i$ factors through~$\wt{\rG}_{i,\CC}$.

Let $U_i$ denote a nontrivial $\wt{\rG}_{i,\CC}$-invariant irreducible subspace of $V_i$.
By \Cref{lm:m} all linear summands of $V_\CC$ are Galois conjugate to $U_i$ or to its $\GSp$-dual $U_i^\vee(\chi)$. It follows that each $V_i$ is (non-canonically) isomorphic
to the direct sum of $m(V_i)$ such linear summands.
Since $G$ is $\QQ$-simple, the Galois group 
$\Gal(\CC/\QQ)$
permutes all the summands $V_i$ of $V_\CC$, and thus
$m=m(V_i)$ is independent of $i$.

\begin{rem}\label{rmk:m2}
The $\wt{\rG}_{i,\CC}$-representation $V_i$ is symplectic
but $U_i$ need not be.
If $U_i$ is symplectic, then each $V_j$ is (non-canonically) isomorphic to 
$U^{\vs_1}_i\oplus\dots\oplus U^{\vs_m}_i$, for suitable elements 
$\vs_1,\dots,\vs_m$ of the Galois group.
If $U_i$ is not symplectic, then $m\geq 2$ is even and each $V_j$
is (non-canonically) isomorphic to $(U_i^{\vee}(\chi)^{\vs_1}\oplus U_i^{\vs_1})\oplus\dots\oplus
(U_i^{\vee}(\chi)^{\vs_{m/2}}\oplus U_i^{\vs_{m/2}})$,
for suitable elements $\vs_1,\dots,\vs_{m/2}$ of the Galois group.
\end{rem}

The following estimate will play a key role in the determination of the largest
compact special subvariety of 
$\cA_{g}$ of dimension larger than $g-1$.

\begin{lm}\label{lm:key-estimate}
With notation as above, depending on the type of $\rH$, the resulting indecomposably d-achievable pair $(d,g)$ 
is either negligible or dominated by one of the  pairs given in \Cref{table:d-achiev} below. 
Moreover, the pair $(1,2)$ in the (A$_1$) case
and the pairs $\large((k-1)\lceil\frac{n}{2}\rceil\cdot\lfloor\frac{n}{2}\rfloor, nk\large)$
in the case (I$_n$) with $n\geq 3$ and $k\geq 2$ are indecomposably d-achievable.
\end{lm}

\begin{table}[H]
\begin{tabular}{ |c | c |}
 \hline  \textbf{Type of group $\rH$}  & \textbf{Non-negligible} \\
  \textbf{and representation}  & \textbf{dominating pairs $(d,g)$} \\
\hline\hline (A$_1$) & $(1,2)$ 
\\
\hline (D$_4$) & Negligible \\ 
\hline (I)$_n$, with $n\ge 3$ & $\prec\Big((k-1)\lceil\frac{n}{2}\rceil\cdot\lfloor\frac{n}{2}\rfloor, nk\Big)$ with $k\geq 2$  \\  
\hline (I') & Negligible \\  
\hline (II)$_r$ with $r\geq 4$ & $\preceq \Big((k-1)\frac{r(r-1)}{2},2rk\Big)$ with $k\geq 2$ \\  
\hline (III)$_r$ with $r\geq 2$ & $\preceq \Big((k-1)\frac{r(r+1)}{2}, 2rk\Big)$ with $k\geq 2$\\
\hline (IV) & Negligible  \\ 
\hline
\end{tabular}\\

\medskip

\caption{List of possible non-negligible dominating pairs}\label{table:d-achiev}
\end{table}

\begin{rem}\label{rem:A-dominates}
As \Cref{table:d-achiev} shows,
the case (III)$_3$
is strictly dominated by (I)$_6$;
moreover all the other (II)$_r$ and (III)$_r$
cases are strictly dominated by the case (I)$_{2r-1}$.
\end{rem}

\begin{rem}\label{rem:suboptimal}
For $3\leq g\leq 15$, the pair $(g-1,g)$ is not 
$d$-achievable, but it is for $g=2$ (see \Cref{ssc:SL2} below).
\end{rem}

We now give the proof of \Cref{lm:key-estimate}, by a case-by-case analysis.

\subsection{Proof of \Cref{lm:key-estimate}}\label{sec:estimates}

We separately analyze every case occurring
in \Cref{table:Satakelist}.

Note that Satake's classification of complex irreducible representations arising
from embeddings of Hermitian symmetric spaces inside the Siegel upper half-space
in \Cref{table:milneslist} is enough for all but one case.
To treat case (I) in \Cref{sec:best},
which is also the most important one for our purposes,
we have to use
the more accurate \Cref{table:Satakelist}.

\subsubsection{Case (A$_1$)}\label{ssc:SL2}

We know that $\dim U_i=2$ for all $i$. Hence $g=mk$, and $d$ is the number of $i$ such that 
$\wt{\rG}_i$ is non-compact, hence $d\leq k$. Now, if $m=1$, then it follows that $\dim W=2$ and hence $\rH\cong \SL_{2,\KK}$, which is not anisotropic. Hence $m\geq 2$ and so $d\leq \frac{g}{2}$.
The only potentially
non-negligible case is then $(d,g)=(1,2)$.

{\it{Existence for $(d,g)=(1,2)$.}} There are indeed compact quaternionic Shimura curves in 
$\cA_{2}$
(see \cite{hashimoto1995}, for instance).
Thus $(d,g)=(1,2)$ is indecomposably d-achievable.

\subsubsection{Case (D$_4$)}

By the sixth line of \Cref{table:milneslist}
the representation $U_i$, 
corresponding to $\rG_i$ isogenous to 
$(\SO_{6,2})_\RR$,
is not symplectic (it is in fact orthogonal) and $8$-dimensional.
Hence $m\ge 2$, and so  $\dim V_i\ge 16$. Thus the indecomposably d-achievable pairs are dominated by $(6k,8k)$, which are negligible.

\subsubsection{ Case (I)}\label{sec:best}

Consider the case (I)$_{p,n-p}$, with the standard representation, which is not self-dual.

Suppose first that 
$\wt{\rG}(\RR)$
has at least one compact factor and look at the second line of \Cref{table:milneslist}.
We have $d\leq (k-1)\cdot\lceil\frac{n}{2}\rceil\cdot\lfloor\frac{n}{2}\rfloor$. Moreover $m\geq 2$, and so $g\geq nk$.
Thus $(d,g)$ is dominated by $((k-1)\cdot\lceil\frac{n}{2}\rceil\cdot\lfloor\frac{n}{2}\rfloor, nk)$, which
is indecomposably d-achievable as we show below.

Assume now that 
$\wt{\rG}(\RR)$
has no compact factors. By \Cref{lm:anisotropicif0}, and according to 
cases (uF) and (uD) in \Cref{subsec:Hasse}, this can only occur if our group $\rH$ is isogenous to $\SU(\DD^r,\eta)$, where $\DD$ is a division algebra of degree $\delta\geq 2$ over a CM extension $\LL/\KK$, with involution $\vs$ extending  the nontrivial automorphism of $\LL/\KK$ and $\eta$ is a Hermitian form on $\DD^r$ with $r\delta=n$ and $r\leq 2$. 
In this case, 
as a complex 
$\SU(\DD^r,\eta)_\CC$-representation,
$(\DD^r)_\CC\cong \mathcal{M}_{\delta,\delta}(\CC)^{\oplus r}\cong (\CC^\delta\otimes\CC^r)^{\oplus\delta}$ has $\delta$ complex irreducible summands isomorphic to the standard
$\CC^\delta\otimes\CC^r$. Moreover its endomorphism
algebra as a complex 
$\SU(\DD^r,\eta)_\CC$-representation
contains (and therefore is equal to) $\DD$. Hence $\DD^r$ is irreducible as a $\KK$-representation
and not self-dual, which implies $m\geq 2\delta$. Thus the best possible achievable pair here is dominated by $\Big(k\cdot\lceil\frac{n}{2}\rceil\cdot\lfloor\frac{n}{2}\rfloor,\delta nk\Big)$. 

\begin{claim}\label{claim:F}
The case of 
$\wt{\rG}(\RR)$
without compact factors
does no better than the above case of 
$\wt{\rG}(\RR)$
with one compact factor, and the two estimates agree exactly when $\delta=2$ and $k=1$, and so $n=r\delta\in\{2,4\}$.
\end{claim}

\begin{proof}
Note first that the function $F(n)\coloneqq\lceil\frac{n}{2}\rceil\cdot\lfloor\frac{n}{2}\rfloor$
satisfies $\frac{n^2-1}{4}\leq F(n)\leq \frac{n^2}{4}$.
The case with non-compact factors potentially produces
pairs $P'_0(s,\delta)=(s F(\delta),s\delta^2)$ for $r=1$
and $P''_0(s,\delta)=(sF(2\delta),2s\delta^2)$ for $r=2$,
where $\delta\geq 2$ and $s\geq 1$.
We want to compare such pairs with the pairs
$P_1(k,n)=((k-1)F(n),kn)$ obtained in the case with $1$ compact factor for $k\geq 2$.

Consider first the case $r=1$.
For $s\delta=2$, namely $(s,\delta)=(1,2)$, we obtain
$P'_0(1,2)=(1,4)=P_1(2,2)$.
For $s\delta>2$, we have $s\delta^2\geq 8$
and the resulting pair $P'_0(S,\delta)$
is strictly dominated by
$P_1(k,n)$ with $k=2$ and 
$n=\lfloor \frac{s\delta^2}{2}\rfloor$.
Indeed, we have
$kn\leq s\delta^2$ and
$(k-1)F(n)\geq \frac{n^2-1}{4}
\geq
\frac{((s\delta-1)^2/4-1)^2}{4}>\frac{s^2\delta^2}{4}\geq
\frac{s\delta^2}{4}\geq sF(\delta)$.

Consider now the case $r=2$.
For $(s,\delta)=(1,2)$, we obtain
$P''_0(1,2)=(4,8)=P_1(2,2)$.
For $s\delta>2$
the resulting pair $P''_0(s,\delta)$ is strictly
dominated by $P_1(k,n)$ with $k=2$ and $n=s\delta^2$.
Indeed, we have $kn=2s\delta^2$ and
$(k-1)F(n)\geq \frac{n^2-1}{4}
\geq \frac{(s\delta^2)^2-1}{4}>s\delta^2=sF(2\delta)$.

This completes the proof of the claim.
\end{proof}

For 
$\wt{\rG}(\RR)$
without compact factors,
the desired conclusion follows from \Cref{claim:F}.
Indeed, if $\delta=2$ and $k=1$, we have $n=r\delta\in\{2,4\}$: in these cases we achieve the pairs $(1,4)$ and $(4,8)$, both of which are negligible.\\

{\it{Existence for $(d,g)=((k-1)\cdot\lceil\frac{n}{2}\rceil\cdot\lfloor\frac{n}{2}\rfloor, nk)$.}}
Let $\KK$ be a totally real field of degree $k$ over $\QQ$, let $\EE/\KK$ be a totally
imaginary quadratic extension, which comes with a natural involution,
and let $\rH\coloneqq \textrm{GU}(\EE^n,\eta)$, where $\eta$ is a Hermitian form on $\EE^{n}$.
By \Cref{claim:signature} below, the form $\eta$ can be chosen to have
signature $(\lceil\frac{n}{2}\rceil,\lfloor\frac{n}{2}\rfloor)$
at all but one 
infinite
place of $\KK$, and definite signature at the remaining 
infinite 
place $\si$. 

We give 
$\rG\coloneqq \mathrm{Res}_{\KK/\QQ}\rH$ 
the Shimura datum $\dX$ 
consisting of the conjugacy class of $x:\bS\ra \rG_\RR=\rH_{\si_1,\RR}\times\dots\times\rH_{\si_k,\RR}$
defined
as follows. 
Fix an infinite place $\si_i:\KK\hra\RR$
and note that $\si_i$ extends to an embedding $\EE\hra\CC$.
Let $\EE_i\coloneqq\si_i(\EE)$ and let $\eta_i$ be the hermitian form
on $\EE_i^n$ induced by $\eta$.
Pick an orthogonal decomposition  of
$\EE_i$-spaces
as $\EE_i^n\cong P_i\oplus N_i$ such that
$\eta_i$ is positive definite on $P_i$ and negative definite on $N_i$. 
Then define $x$
so that $x(z)$ acts as $z$ on $P_{i,\RR}$ and as~$\ol{z}$ on $N_{i,\RR}$ 
for all $z\in\bS$
and for $i=1,\dots,k$. 
This determines a Shimura datum, and makes $\EE^{n}$ of type $\{(-1,0),(0,-1)\}$. We then obtain the desired symplectic embedding via \cite[Lemma 10.15]{milne2011shimura}.\\

Here we recall that the following elementary fact was used in the above
construction.

\begin{claim}[Classical]\label{claim:signature}
Let $\EE/\KK$ be as above.
For every $p_1,\dots,p_k\in \{0,1,\dots,n\}$,
there exists a Hermitian form $\eta$ on $\EE^n$ that has signature
$(p_i,n-p_i)$ at the $i$-th infinite place $\si_i:\KK\hra\RR$.
\end{claim}

\begin{proof}
Since $\KK$ has dense image in $\KK\otimes_\QQ \RR\cong\RR^k$, and since $\KK^n$ thus has dense image in $(\RR^n)^k$,
there exists $(\lambda_1,\dots,\lambda_n)\in\KK^n$
whose image $(\si_i(\lambda_1),\dots,\si_i(\lambda_n))$ in
$\RR^n$ has $p_i$ positive entries and $n-p_i$ negative entries for each 
embedding $\si_i:\KK\hra\RR$.
It is then enough to take as $\eta$
the Hermitian form represented by a diagonal matrix
with entries $(\lambda_1,\dots,\lambda_n)$.
\end{proof}

\subsubsection{Case (I')}

Suppose we are in case (I')$^c_{n-1,1}$, 
and consider the third line of \Cref{table:milneslist}.
The representation is $\Lambda^c \CC^n$ and $d\leq (n-1)k$. 

Suppose first that the representation is not symplectic. Then $m\geq 2$,
and so $g\geq k\frac{m}{2}\binom{n}{c}\geq k\binom{n}{2}=k\frac{n(n-1)}{2}\geq d\cdot\frac{n}{2}\geq2d$. Since
$n\geq 4$, we have $g\geq k\frac{n(n-1)}{2}\geq 6$, and so the achieved pair is negligible.

Suppose now that
the representation is symplectic. Then $c=\frac{n}{2}$ is odd, so $n\geq 6$. Here we have $g\geq \frac{k}{2}\cdot \binom{n}{\frac{n}{2}}\geq 10$, and so $g/d\geq 
\frac{1}{2(n-1)}\binom{n}{\frac{n}{2}}\geq 2$.

Thus we only get negligible pairs.

\subsubsection{Case (II)}

Suppose we are in case (II)$_r$, so that the group $\rH$ is isogenous to $\SU(\HH^r,\eta)$ with $r\geq 2$ 
and $\eta$ a skew-Hermitian form. 
Since the representation is not symplectic, we have $m\geq 2$.

Suppose $r=2$. Then $d\leq k$ and $g\geq 4k$.
Hence $(d,g)$ is dominated by $(k,4k)$, which is negligible.

Suppose $r=3$. Then $d\leq 3k$ and $g\geq 6k$. Hence our pairs are dominated by $(3k,6k)$, which is negligible.

Now suppose $r\geq 4$. 
By \Cref{lm:anisotropicif0}, and according to 
case (skH) in \Cref{subsec:Hasse},
the group 
$\wt{\rG}(\RR)$
can have at most $(k-1)$ 
non-compact factors. Hence the resulting pairs are dominated by  $((k-1)\frac{r(r-1)}{2},2rk)$, as desired.

\subsubsection{Case (III)}

In case (III.1) the group is isotropic, so this case does not occur.

Consider now the case (III.2), in which the group  $\rH$ is isogenous to $\SU(\HH^r,\eta)$ with $\eta$ Hermitian. We assume $r\geq 2$, as $r=1$ gives a form of $\SL_2$ and is handled above. 
By \Cref{lm:anisotropicif0}, and according to 
case (uH) in \Cref{subsec:Hasse},
the fact that $\rH$ is anisotropic implies
that 
$\wt{\rG}(\RR)$
has at least one compact factor.
Now, if $W$ were irreducible, then 
$\rH_\RR$ and $\Sp(W)_\RR$ would have the same dimension
and so the map $\rH\ra\Sp(W)^{\ad}$ would be an isogeny,
which would therefore imply that $\rH$ is not anisotropic.
Hence $m\geq 2$ and
the resulting pair is dominated by
$\Big((k-1)\frac{r(r+1)}{2}, 2rk\Big)$.

\subsubsection{Case (IV)}

In all these cases $\rH^{\ad}$ is a form of $\SO^{\ad}_n$ for $n\geq 5$. 

From 
the last two lines of
\Cref{table:milneslist}
we have
$g\geq mk \cdot 2^{\lfloor\frac{n-3}{2}\rfloor}$.
Moreover,
by \Cref{lm:anisotropicif0} and by 
(q) and (skH) in \Cref{subsec:Hasse},
the group 
$\wt{\rG}(\RR)$
must have at least 1 compact factor, and so $d\leq (k-1)(n-2)$.

Consider the case $n=5$.
We have $d\leq 3(k-1)$ and $g\geq 2km$.
If $W$ is irreducible,
then both $\rH_\RR$ and $\Sp(W)_\RR$ would have dimension $10$; thus $\rH\ra\Sp(W)^{\ad}$ would be an isogeny
and so $\rH$ would be isotropic.
Hence $W$ is not irreducible and so $m\geq 2$,
which implies that $g\geq 4k$. Thus the resulting pair is negligible.

Assume now $n=6,7,8$. The representation is not symplectic and so $m\geq 2$.
For $n=6$ we have $g\geq 2mk\geq 4k$ and $d\leq 4(k-1)$.
For $n=7$ we have $g\geq 4mk\geq 8k$ and $d\leq 6(k-1)$.
For $n=8$ we have $g\geq 4mk\geq 8k$ and $d\leq 8(k-1)$.
So these cases produce negligible pairs.

Assume finally $n\geq 9$, so that
$2^{\lfloor\frac{n-3}{2}\rfloor}\geq n-2$.
It follows that $g\geq k(n-2)$ and so the resulting
pairs are negligible.\\

The proof of \Cref{lm:key-estimate} is complete.

\subsection{Hodge-generic subvarieties and
compact special subvarieties arising from decoupled representations} 

In view of the statement
of \Cref{thm:dmcAg},
for every $g\geq 1$ we define the quantity
$$
\dmax(g)\coloneqq \max\left(g-1,\left\lfloor \frac{\lfloor g/2\rfloor ^2}{4}\right\rfloor\right)= \begin{cases}
          \ \ g-1 & \mbox{if } g<16\,; \\
          \ \ \left\lfloor\tfrac{g^2}{16}\right\rfloor & \mbox{for even } g\ge 16\,;\\
          \left\lfloor\tfrac{(g-1)^2}{16}\right\rfloor & \mbox{for odd } g\ge 17\,.
        \end{cases}
$$

First, we have some elementary numerical
property of the function $\dmax$.

\begin{lm}\label{lm:dmax}
For all $1\leq g_1\leq g_2$ we have
\[
\dmax(g_1+g_2)\geq \dmax(g_1)+\dmax(g_2).
\]
Moreover, the above inequality is strict unless $g_1=1$ and $g_2\geq 16$ is even.
\end{lm}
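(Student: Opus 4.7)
My plan is a case analysis on the sizes and parities of $g_1$ and $g_2$. The guiding observation, which both pinpoints the equality case and drives the strict estimates, is that $\dmax(g) = \dmax(g-1)$ for every odd $g \ge 17$: both values equal $\lfloor (g-1)^2/16 \rfloor$. Together with $\dmax(1) = 0$, this immediately delivers the equality case: if $g_1 = 1$ and $g_2 \ge 16$ is even, then $s := g_1 + g_2 \ge 17$ is odd, so $\dmax(s) = \dmax(s-1) = \dmax(g_2) = \dmax(g_1) + \dmax(g_2)$. The rest of the argument consists in proving strict inequality in every other case.

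First I would dispose of the two easy regimes. If $s \le 15$ then $g_1, g_2 \le 15$, so $\dmax(g_i) = g_i - 1$ and $\dmax(s) = s - 1$, giving a difference of exactly~$1$. If $g_2 \le 15$ and $s \ge 16$, then $\dmax(g_1) + \dmax(g_2) = s - 2$, and it suffices to check that $\dmax(s) > s - 2$ for every $s \ge 16$; this is a direct check for $s \in \{16, 17\}$ and for $s \ge 18$ follows from the elementary inequality $(s-1)^2 - 16(s-1) - 4 > 0$, whose larger root is approximately $17.25$.

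The essential case is $g_2 \ge 16$. To control the floors I would write $\dmax(g) = (g^2 - r_g)/16$ for even $g \ge 16$, with $r_g \in \{0, 4\}$ determined by $g \bmod 4$, and then examine the four parity sub-cases of $(g_1, g_2)$. When both are even, the identity $s^2 - g_1^2 - g_2^2 = 2 g_1 g_2$ together with a finite check showing $r_{g_1} + r_{g_2} - r_s \ge 0$ yields that $\dmax(s) - \dmax(g_1) - \dmax(g_2)$ is of order $g_1 g_2 / 8$ when $g_1 \ge 16$, and of order $g_1(g_1 + 2g_2 - 16)/16$ when $g_1 \le 14$; both are strictly positive. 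The mixed-parity and odd-odd sub-cases reduce to the even-even one via the identity $\dmax(g) = \dmax(g-1)$ for odd $g \ge 17$: odd arguments are replaced by the adjacent even integer, and the discrepancy $\dmax(g_1) - \dmax(g_1 - 1) \in \{0, 1\}$ (which is non-zero only when $g_1$ is odd and $\le 15$) is absorbed into the slack of the even-even estimate whenever $g_1 \ge 3$. When $g_1 = 1$ the slack collapses, producing either the exceptional equality (if $g_2$ is even) or else a direct verification that $\lfloor(g_2+1)^2/16\rfloor - \lfloor(g_2-1)^2/16\rfloor \ge 4$ (if $g_2$ is odd).

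The only genuine obstacle is bookkeeping the residues $r_g$ inside the floors, but this reduces to a small finite table of compatible triples $(r_{g_1}, r_{g_2}, r_s)$ in each parity sub-case, which is tedious but entirely elementary.
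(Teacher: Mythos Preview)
Your plan is correct and complete: the key identity $\dmax(g)=\dmax(g-1)$ for odd $g\ge 17$ indeed pins down the equality case, and your parity reduction to the even--even situation works once one notes (as you implicitly use) that $\dmax$ is non-decreasing, so that in the odd--odd sub-case one may pass from $\dmax(s)$ to $\dmax(s-2)$ before invoking the even--even slack of at least~$3$.

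Your route is, however, different from the paper's. The paper does not split by parity at all: after disposing of $g_1=1$ and of $g_1+g_2\le 15$, it argues by a discrete convexity step, namely that for every even $g'<g_1$ the difference $\dmax(g)-\dmax(g-g')$ is non-decreasing in $g$, so that shifting an even amount from $g_1$ to $g_2$ can only increase $\dmax(g_1)+\dmax(g_2)$. This reduces the verification of the strict inequality to the two cases $g_1=2$ and $g_1=3$, which are checked directly via $\dmax(g_2+2)-\dmax(g_2)\ge 2$ and $\dmax(g_2+3)-\dmax(g_2)\ge 3$. The paper's argument is shorter and more conceptual (one monotonicity claim plus two endpoint checks), while yours is more explicit and self-contained, at the cost of tracking the residues $r_g\in\{0,4\}$ through several parity sub-cases. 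Both approaches are elementary; neither has a real advantage beyond taste.
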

\begin{proof}
Suppose first that $g_1=1$.
Then $\dmax(1+g_2)\geq \dmax(g_2)$ and the inequality is strict unless $g_2\geq 16$ and is even.

Now assume $g_1\geq 2$.

Suppose that $g_1+g_2\leq 15$.
Then $\dmax(g_1+g_2)=g_1+g_2-1> (g_1-1)+(g_2-1)=\dmax(g_1)+\dmax(g_2)$.

Suppose now $g_1+g_2\geq 16$.

By direct computation, $\dmax(14)=13$, $\dmax(15)=14$, $\dmax(16)=\dmax(17)=16$
and $\dmax(18)=\dmax(19)=20$.
It is then easy to check that, for every even $g'\geq 2$ smaller than $g_1$, we have $\dmax(g_1)-\dmax(g_1-g')\leq \dmax(g_2)-\dmax(g_2-g')$.
This implies that it is enough to verify the statement for $g_1=2$ or $g_1=3$.

By inspection,
for $g_1=2$ we have $\dmax(2+g_2)\geq 2+\dmax(g_2)$ and so $\dmax(2+g_2)>\dmax(2)+\dmax(g_2)$.
Similarly, for $g_1=3$ we have $\dmax(3+g_2)\geq 3+\dmax(g_2)$ and so $\dmax(3+g_2)>\dmax(3)+\dmax(g_2)$.
\end{proof}

As a consequence of the analysis done in \Cref{ssc:key} and \Cref{sec:estimates},
we obtain the following bound for the d-achievable pairs. 

\begin{prop}\label{prop:estimate} 
For a d-achievable pair $(d,g)$ we have $d\leq \dmax(g)$,
and equality holds for $g=2$, and for $g\geq 16$.
Moreover, $(\dmax(g),g)$ is indecomposably d-achievable if and only if $g=2$, or if $g$ is even and $g\geq 16$.
\end{prop}
\begin{proof}
Let us first check the indecomposably d-achieved pairs.
Here we invoke \Cref{lm:key-estimate}.
In particular,
we observe that by \Cref{rem:A-dominates}
we only have to consider only the $(I)_n$ case with $n\geq 3$ and $k\geq 2$, which gives the indecomposably d-achievable pairs  
  $$(d,g)=\left((k-1)\left\lceil\frac{n}{2}\right\rceil\cdot
  \left\lfloor\frac{n}{2}\right\rfloor,  kn\right)\, .$$ 

By \Cref{table:d-achiev} we see that, for $g=2$, the only indecomposably d-achievable pair is $(1,2)$, obtained in the (A$_1$) case.
For $3\leq g\leq 15$ in the (I$_n$) case
we obtain the pairs 
$(2,6)$, $(4,8)$, 
$(6,10)$, $(9,12)$, $(12,14)$ for $k=2$,
the pairs
$(4,9)$, $(8,12)$, $(12,15)$ for $k=3$,
and the pairs
$(k-1,2k)$, $(2k-2,3k)$ for $k\geq 4$. 
All such pairs are negligible.

Assume from now on that $g\geq 16$.\\

{\it{Case $g=kn\geq 16$ even.}}
We have $d=(k-1)\left\lceil\frac{g}{2k}\right\rceil\cdot\left\lfloor\frac{g}{2k}\right\rfloor$, and so for $k=2$ we obtain $d=\dmax(g)$.

Since $\tfrac{g}{2k}=\tfrac{n}{2}$, the product $\left\lceil\frac{g}{2k}\right\rceil\cdot\left\lfloor\frac{g}{2k}\right\rfloor$ is simply equal to~$\tfrac{n^2}{4}$ if $n$ is even, and to $\tfrac{n^2-1}{4}$ if~$n$ is odd, and thus
\[
d\leq (k-1)\frac{n^2}{4}=(k-1)\frac{g^2}{4k^2}\,.\]
The derivative of this, as a function of~$k$, is equal to $\tfrac{2-k}{k^3}g^2$, and thus it is a strictly decreasing function for $k\ge 2$. Thus for $k\ge 3$ and $g\ge 16$ we obtain the estimate
\[ (k-1)\frac{g^2}{4k^2}\leq 2\frac{g^2}{4\cdot 3^2}=\frac{g^2}{18} <\frac{g^2-4}{16}
\leq \left\lceil \frac{g}{4}\right\rceil\cdot\left\lfloor \frac{g}{4}\right\rfloor=\dmax(g).
\]
Hence the achieved pairs are strictly dominated by the case $k=2$.

{\it{Case $g=kn\geq 17$ odd.}}
Then
\[
d=\frac{k-1}{4}\left(\frac{g^2}{k^2}-1\right),
\]
which, as a function of $k$,
has derivative $-\frac{1}{4}-\frac{k-2}{4k^3}g^2$,
and so is again decreasing for $k\geq 2$.

Write $g=2\ell+1$ with $\ell\ge 8$ and 
note that $k\geq 3$ must be odd.
Then $d\le \frac{1}{2}\left(\frac{g^2}{9}-1\right)=\frac{(2\ell+1)^2}{18}-\frac{1}{2}$. 
Observe that $\frac{\ell^2}{4}-\frac{1}{4}\le \lfloor\frac{(g-1)^2}{16}\rfloor$
and $\frac{(2\ell+1)^2}{18}-\frac{1}{2}<\frac{\ell^2}{4}-\frac{1}{4}$ for $\ell\ge 8$. Hence, $d< \lfloor\frac{(g-1)^2}{16}\rfloor=\dmax(g)$ for $g\ge 17$ odd.\\

We thus conclude that, for $g\geq 16$, 
all indecomposably d-achievable pairs $(d,g)$ satisfy
$d\leq \dmax(g)$, and that
the pair
$(\dmax(g),g)$ is indecomposably
d-achievable only for $g=2n$ even,
in the (I)$_n$ case, with $k=2$.\\

Let now $(d,g)$ be an d-achievable pair,
which is not indecomposably d-achievable.
It corresponds to a compact special subvariety
of 
$\cA_{g}$
induced by an embedding
$(G,X)\hra\SD_g$ of Shimura data that factors
through $\SD_{g',g-g'}$ with $g'\leq g-g'$. This implies
that $d\leq \dmax(g')+\dmax(g-g')$.
By \Cref{lm:dmax}
we have $\dmax(g')+\dmax(g-g')\leq \dmax(g)$,
and equality $d=\dmax(g)$ can be attained
only for $g\geq 17$ odd and $g'=1$. This is
indeed the case by \Cref{lm:g+1}.
\end{proof}

The following corollary will be useful for the next section:

\begin{cor}\label{cor:decoupledboundsdg}
 Let $\rG$ be a simple $\QQ$-group with $\wt{\rG}_\RR=\prod_{i=1}^k \wt{\rG}_i$. Assume that
$\wt{\rG}(\RR)$
has at least one compact factor, and let $U$ be a 
$\CC$-irreducible decoupled representation of $\wt{\rG}_\CC$ occurring in \Cref{table:milneslist}. Then 
$d(\rG)\leq \dmax(k\dim U)$.

If equality holds, then $\rG$ is of type $(I_n)$ with $k=2$.
\end{cor}

\begin{proof}
This is a byproduct of the analysis carried over
in \Cref{sec:estimates}, when $\wt{\rG}_\RR$ has at most $k-1$ noncompact factors. 
Here we recall the relevant estimates,
keeping in mind \Cref{table:milneslist} and the exact same analysis as in \Cref{prop:estimate}.
\begin{itemize}
    \item[(A$_1$)]
We have $\dim U=2$, so $d(\rG)\leq k-1<\dmax(2k)$.
\item[(D$_4$)]
We have $\dim U=8$, so $d(\rG)\leq 6(k-1)<8k-1\leq \dmax(8k)$.
\item[(I$_n$)]
We have $\dim U=n$
and $d(\rG)\leq (k-1)\lceil\frac{n}{2}\rceil \lfloor\frac{n}{2}\rfloor\leq \dmax(kn)$.
By \Cref{prop:estimate}, equality can hold only if $k=2$.
\item[(I$'_n$)]
We have $\dim U=\binom{n}{c}\geq n$ and
$d(\rG)\leq (k-1)(n-1)<k\dim U-1\leq \dmax(k\dim U)$.
\item[(II$_r$)]
We have $\dim U=2r$ and
$d(\rG)\leq (k-1)\frac{r(r-1)}{2}< k\frac{k}{2} \frac{r^2-1}{2}\leq \dmax(2kr)$.
\item[(III$_r$)]
In case (III.1), the group $G$ is isotropic.
In case (III.2), we have
$\dim U=2r$ and $d(\rG)\leq (k-1)\frac{r(r+1)}{2}
< \frac{k^2 r^2-1}{4}\leq \dmax(2kr)$,
where the second inequality depends on
$k,r\geq 2$.
\item[(IV$_n$)]
We have $\dim U=2^{\lfloor \frac{n-1}{2}\rfloor}$,
and $d(\rG)\leq (k-1)(n-2)< k 2^{\lfloor \frac{n-1}{2}\rfloor}-1\leq \dmax(k\dim U)$,
since $n-2\leq 2^{\lfloor \frac{n-1}{2}\rfloor}$ for $n\geq 5$.
\end{itemize}
\end{proof}

In view of \Cref{prop:estimate} and \Cref{thm:finalestimate}, a compact subvariety of 
$\cA_{g}$
of maximal dimension
can be either a Hodge-generic compact subvariety (for example, a complete intersection),
or a compact special subvariety, or a product of the two types.
For all cases in which the special subvariety corresponds to a decoupled representation, we obtain the following result.
 
\begin{prop}\label{thm:dmcAg2}
Let $Z$ be a compact subvariety of 
$\cA_{g}$
of maximal possible dimension, which
is either Hodge-generic, or a special subvariety induced by a decoupled representation,
or a product of these two types.
Then $\dim Z=\dmax(g)$.
\end{prop}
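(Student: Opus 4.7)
The plan is to establish a matching upper and lower bound, both equal to $\dmax(g)$.

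For the upper bound, I would argue case by case according to the three listed types. When $X\subseteq \cA_g$ is Hodge-generic, \Cref{thm:dmcNCShimura} applied with $\Shi=\cA_g$ immediately yields $\dim X\le g-1\le \dmax(g)$. When $X$ is a Shimura subvariety induced by an $\Sp$-irreducible decoupled representation of some group $G$, the group $G$ must be anisotropic by the compactness of $X$ and \Cref{lm:aniso}, so the associated pair $(\dim X,g)$ is d-achieved, and \Cref{prop:estimate} directly gives $\dim X\le \dmax(g)$. When $X=X_1\times \cdots \times X_n\subseteq \cA_{g_1}\times \cdots \times \cA_{g_n}\subseteq \cA_g$ is a product of factors of the two pure types, with $\sum_i g_i=g$, I would bound each $\dim X_i\le \dmax(g_i)$ by the two previous cases (using the elementary inequality $g_i-1\le \dmax(g_i)$, an equality for $g_i\le 15$ and reducing to $g_i^2\ge 16(g_i-1)$ otherwise) and iterate \Cref{lm:dmax} to conclude $\dim X=\sum_i \dim X_i \le \sum_i \dmax(g_i) \le \dmax(g)$.

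For the lower bound, I would produce, for every $g$, an explicit compact $X$ of one of the three types with $\dim X=\dmax(g)$. For $g\le 15$, the value $\dmax(g)=g-1$ is realized by a Hodge-generic complete intersection in $\cA_g$ obtained from the Satake compactification as described in \Cref{sec:intro-general}. For even $g\ge 16$, \Cref{rem:achieved} provides a d-achieved pair $(\dmax(g),g)$, hence a Shimura subvariety of $\cA_g$ of dimension $\dmax(g)$ arising from an $\Sp$-irreducible decoupled representation. For odd $g\ge 17$, I would exploit the identity $\dmax(g)=\lfloor(g-1)^2/16\rfloor=\dmax(g-1)$, valid because $g-1$ is even and at least $16$: taking $[E]\in\cA_1$ to be any non-CM elliptic curve (viewed as a Hodge-generic compact subvariety of $\cA_1$ of dimension zero) and $Y\subseteq\cA_{g-1}$ to be a Shimura subvariety of dimension $\dmax(g-1)$ from the even-genus case just treated, the product $\{[E]\}\times Y\subseteq \cA_1\times\cA_{g-1}\subseteq \cA_g$ is a compact subvariety of the third type, with dimension exactly $\dmax(g)$.

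I do not anticipate any serious obstacle: the proof is essentially a synthesis of \Cref{thm:dmcNCShimura}, \Cref{prop:estimate}, \Cref{rem:achieved}, and the subadditivity provided by \Cref{lm:dmax}. If anything deserves minor care, it is the interpretation of ``product of these two types'' in the statement, which I would read as a product of any finite number of factors, each of one of the two pure types; under this reading the upper-bound argument through \Cref{lm:dmax} extends without change beyond two factors, while the lower-bound constructions already produce a product with at most two factors.
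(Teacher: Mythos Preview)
Your proposal is correct and follows essentially the same approach as the paper: both establish the upper bound by combining \Cref{thm:dmcNCShimura} (Hodge-generic case), \Cref{prop:estimate} (Shimura case), and the subadditivity of \Cref{lm:dmax} (product case), and realize the lower bound via complete intersections for small $g$, the d-achieved Shimura subvarieties of \Cref{rem:achieved} for even $g\ge 16$, and a point in $\cA_1$ times such a Shimura subvariety for odd $g\ge 17$. The only cosmetic difference is that the paper also notes that for $g=17$ a Hodge-generic subvariety already attains $\dmax(17)=16$, whereas you use the product construction there; both are valid (cf.\ \Cref{rem:unique}).
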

\begin{proof} 
Recall that, by \Cref{thm:dmcNCShimura}, the maximal dimension of a compact Hodge-generic subvariety of 
$\cA_{g'}$
is $g'-1$. 
By \Cref{thm:finalestimate},
\Cref{lm:dmax} and \Cref{prop:estimate},
the bound $\dim Z\leq\dmax(g)$ is immediate
and in fact it is attained.
\end{proof}

\begin{rem}\label{rem:unique}
There are two interesting cases in which the optimal bound in \Cref{thm:dmcAg2}
is achieved in two different ways.
\begin{itemize}
    \item
For $g=2$ a compact curve in 
$\cA_{2}$
can be constructed \begin{itemize}
    \item 
as a Hodge-generic subvariety (e.g.~a component of a very general complete intersection), or
\item
as a special subvariety, as in \Cref{ssc:SL2}.
\end{itemize}
\item
For $g=17$ a compact subvariety of 
$\cA_{17}$
of largest dimension
can be constructed 
\begin{itemize}
    \item
again as a Hodge-generic subvariety (e.g.~a component of a very general complete intersection), or
\item
as the product of a (special or non-special) point 
$[E]\in
\cA_{1}
$ and a largest (16-dimensional) compact Shimura
subvariety $\Shi_{16}$ of 
$\cA_{16}$,
as in \Cref{sec:best}; or as (components of) Hecke translates of $[E]\times \Shi_{16}$
inside 
$\cA_{17}$.
\end{itemize}
\end{itemize}
In all the other cases, the construction described in the proof of \Cref{thm:dmcAg2} is essentially unique (see \Cref{rem:suboptimal}).
\end{rem}

\section{Compact special subvarieties of
$\cA_{g}$
\\ 
from non-decoupled representations}\label{sec:non-decoupled}

In this section we study the dimension of compact special subvarieties of 
$\cA_{g}$
arising from non-decoupled representations.
We show that such special subvarieties always have smaller dimension than the already constructed special subvarieties,
and so \Cref{thm:dmcAg2} indeed gives the maximal dimension of a compact subvariety of 
$\cA_{g}$.

\subsection{Setting}\label{sec:set8}

We want to investigate indecomposable embeddings
of compact Shimura data $(\rG,\dX)$ into a Siegel Shimura datum.
Hence we let $\rG$ be a reductive
algebraic group over $\QQ$ and 
$\wt{\rG}_i$ the geometrically simple factors of 
$\wt{\rG}_\RR$,
so that $\wt{\rG}_\RR=\prod_{i=1}^k \wt{\rG}_i$.

\begin{rem}\label{rem:one-compact}
    Since the embedding of $(\rG,\dX)$ in a Siegel Shimura
    datum is indecomposable and compact (as defined
    at the end of \Cref{sec:Shimura-varieties}), 
for every simple $\QQ$-factor 
    $\wt{\rG}_{(j)}$
    of $\wt{\rG}$,
    the real group $\wt{\rG}_{(j)}(\RR)$
must have at least one {\it{compact}}
    factor by \Cref{prop:productShimura}.    
\end{rem}

Let then $V$ be a $\GSp$-irreducible $\QQ$-representation of $\rG$ associated to an indecomposable embedding of $(\rG,\dX)$.
We recall that, as a $\wt{\rG}_\CC$ representation, $V_\CC$ 
has a canonical decomposition into a direct sum of isotypic components.
Each isotypic component can be written
as a direct sum of a certain number of irreducible subspaces $V_\alpha$:
note that, though such direct sum into irreducibles is non-canonical,
the number of summands is.
Hence, $V_\CC$
is (non-canonically)
isomorphic to a direct sum $\bigoplus_\alpha V_\alpha$ of
irreducible $\wt{\rG}_\CC$-representations $V_\alpha$.

\begin{rem}
Since $V$ is $\GSp$-irreducible,
 by \Cref{lm:m} all $V_\alpha$ are Galois conjugate to each other, or to each other's $\GSp$-duals. Moreover all $V_\alpha$ are self-dual or none of them is. Also, in the self-dual case, either all $V_\alpha$ are symplectic or all $V_\alpha$ are orthogonal.
\end{rem}

Furthermore, each $V_\alpha$ 
is isomorphic to
$V_{\alpha,1}\otimes\dots \otimes V_{\alpha,k}$, 
where $V_{\alpha,i}$ is an irreducible linear representation of $\wt{\rG}_{i,\CC}$: when this happens, we say that each $V_{\alpha,i}$ \textit{occurs in $V$} at the $i$-th position.

\subsubsection{An auxiliary decoupled representation}\label{ssc:aux}

Given $\rG$ and $V$ as above, we introduce
a $\wt{\rG}_\CC$-representation
$V'$, which will play an auxiliary role in
the proof of \Cref{lm:inefficient}.

As discussed above, we can choose a splitting 
$V_\CC=\bigoplus_\alpha V_\alpha$
and isomorphisms $V_\alpha\cong V_{\alpha,1}\otimes\dots\otimes V_{\alpha,k}$ for each $\alpha$.

Since each $\wt{\rG}_{i,\CC}$ is 
semisimple,
its $1$-dimensional
representations are trivial. Hence,
for each $\alpha$, we define the $\wt{\rG}_\CC$-representation $V'_\alpha\coloneqq \bigoplus_{i\in I_\alpha}V_{\alpha,i}$,
where $I_\alpha=\{i\,|\,\dim(V_{\alpha,i})>1\}$.

Our auxiliary decoupled representation is
$V'\coloneqq \bigoplus_\alpha V'_\alpha$.
Though the construction of $V'$ is not very natural,
its isomorphism class as a $\wt{\rG}_\CC$-representation is well-defined.\\

In order to compare the dimensions of $V$ and $V'$,
here we introduce some combinatorial quantities.

For a fixed $\alpha$ consider the multiset of numbers
$\{\dim V_{\alpha,1},\, \dots,\, \dim V_{\alpha,k}\}$, which is independent of $\alpha$
since $V$ is $\GSp$-irreducible (the same phenomenon holds for an irreducible linear representation of $\rG$), and let $N_V$ be the sub-multiset
consisting of numbers strictly greater than one.
Denoting by $\Prod(N)$ and $\Sum(N)$ respectively the product and the sum of the elements of a multiset $N$, for any $\alpha$ we have $\dim V_\alpha=\Prod(N_V)$ and $\dim(V'_\alpha)=\Sum(N_V)$.
Clearly $\Prod(N_V)\geq \Sum(N_V)$.

For computational purposes in the next two subsections
it will be useful to
distinguish two cases according to a certain property of $N_V$, which we define here below.

\begin{dfx}
A finite multiset $N$ of integers strictly greater than one 
is {\em inefficient}
if $\Prod(N)\geq 2\cdot \Sum(N)$, 
and {\it{efficient}} otherwise.
A representation $V$ of $\rG$, which is
irreducible or $\GSp$-irreducible,
is 
{\em (in)efficient} if $N_V$ is.
\end{dfx}

\subsection{Estimates for inefficient representations}

Dimension estimates for inefficient representations can be easily reduced to the decoupled case, and the following lemma shows that inefficient representations are never associated to a compact special subvariety of 
$\cA_{g}$

of maximal dimension.

\begin{lm} \label{lm:inefficient}
Let $(\rG,\dX)\hra \SD_g$ is an indecomposable embedding of Shimura data, with
associated $\GSp$-irreducible representation $V$. Assume that either
\begin{enumerate}
    \item $V$ is inefficient as a $\wt{\rG}$-representation, or
    \item The irreducible subrepresentations of $V_\CC$ are not symplectic. 
\end{enumerate}
Then $d(\rG)<\dmax(g)$, where $2g=\dim V$.
\end{lm}

\begin{proof}
Let $\wt{\rG}=\prod_{j=1}^\ell \wt{\rG}_{(j)}$ be
the decomposition of $\wt{\rG}$ into its
simple $\QQ$-factors.
Let $k_j$ denote the number of simple factors of $\wt{\rG}_{(j),\RR}$, so that $k=\sum_{j=1}^\ell k_j$ is the number
of simple factors of $\wt{\rG}_\RR$.

For every $j$, let $U_j$ be an irreducible nontrivial $\CC$-representation
of a simple factor of $\wt{\rG}_{(j),\CC}$, which occurs in $V$ as
defined in \Cref{sec:set8}.

\begin{claim}\label{claim:ineq}
We have $g\geq \sum_{j=1}^\ell k_j\cdot \dim U_j$.
\end{claim}

We will prove the above claim below.
Assuming \Cref{claim:ineq}, here we show how to conclude the proof.

By \Cref{rem:one-compact} each 
$\wt{\rG}_{(j)}(\RR)$
must have at least one compact factor, and so
$\dmax(k_j\dim U_j)\geq d(\wt{\rG}_{(j)})$
by \Cref{cor:decoupledboundsdg}.
Then, using \Cref{lm:dmax} and \Cref{claim:ineq} we obtain
$$\dmax(g) \geq \dmax\Big(\sum_{j=1}^{\ell} k_j\dim U_j\Big)\geq \sum_{j=1}^{\ell} \dmax(k_j\dim U_j)\geq \sum_{j=1}^{\ell} d(\wt{\rG}_{(j)})=d(\rG).$$

Now, since $\dim U_1\geq 2$,  for the second inequality to be an equality, by \Cref{lm:dmax}  we must have $\ell=1$. By \Cref{cor:decoupledboundsdg}, for the third inequality to be an equality 
we must have $\rG$ of type 
(I$_n$)
and $k=2$, meaning 
$\wt{\rG}(\RR)$
must have exactly one compact factor.

In this case, though, for $\rG$ of type (I$_n$),
we have $2g=\dim V=m n^2$ with $n=\dim U_i\geq 3$
and $m\geq 2$ even (since the (I$_n$) type is not self-dual for $n\geq 3$).
It follows that $g\geq n^2\geq 2n+2$, and so
$\dmax(g)>\dmax(2n)$ by \Cref{lm:dmax}.
Finally, by \Cref{lm:key-estimate}
we have
$d(\rG)\leq \lceil \frac{n}{2}\rceil \cdot \lfloor \frac{n}{2}\rfloor\leq \dmax(2n)< \dmax(g)$.
\end{proof}

Now we prove the above technical claim.

\begin{proof}[Proof of \Cref{claim:ineq}]
Fix a decomposition $V_\CC=\bigoplus_\alpha V_\alpha$
and isomorphisms $V_\alpha\cong V_{\alpha,1}\otimes\dots\otimes V_{\alpha,k}$ for all $\alpha$.

Consider first the $\QQ$-factor $\wt{\rG}_{(1)}$ of $\wt{\rG}$,
so that $\wt{\rG}_{(1),\CC}=\wt{\rG}_{1,\CC}\times\dots\times\wt{\rG}_{k_1,\CC}$.
Up to relabelling the factors of $\wt{\rG}_{(1),\CC}$, we can
assume that $U_1\cong V_{\gamma,1}$ for some $\gamma$.
Let $\vs_1,\dots,\vs_{k_1}\in\Gal(\CC/\QQ)$ such that
$\vs_i(\wt{\rG}_{1,\CC})=\wt{\rG}_{i,\CC}$.
Observe that $\vs_i(V_\gamma)$ is a $\wt{\rG}_{\CC}$-invariant subspace of $V_\CC$ and so it belongs to the isotypic component of $V_{\beta(i)}$
for some $\beta(i)$. It follows that
$U_{(1),i}\coloneqq V_{\beta(i),i}$ has the same dimension as $U_1$.

If the irreducible summands of $V_\CC$ are not symplectic,
then the $\GSp$-dual of $V_\beta$ belongs to a different
isotypic component, say to the isotypic component of $V_{\beta(i)^*}$.
We then define $U_{(1),i}^*\coloneqq V_{\beta(i)^*,i}$.

Put $U_{(1)}\coloneqq U_{(1),1}\oplus\dots\oplus U_{(1),k_1}$
and note that $\dim U_{(1)}=k_1\cdot \dim U_1$.
If the $V_\alpha$ are not symplectic,
put also $U^*_{(1)}\coloneqq U^*_{(1),1}\oplus\dots\oplus U^*_{(1),k_1}$.

For $j=2,\dots,\ell$ define analogously $U_{(j)}$ 
(and $U^*_{(j)}$, if the $V_\alpha$ are not symplectic).\\

Assume first that $V$ is inefficient.
By construction,
the auxiliary representation $V'$ introduced in
\Cref{ssc:aux} contains
$\bigoplus_{j=1}^\ell U_{(j)}$.
It follows
that
\begin{align*}
g & =\frac12\dim V=
\frac{1}{2}\sum_\alpha \dim V_\alpha=
\frac{1}{2}\sum_\alpha\Prod(N_V)
\geq \sum_\alpha \Sum(N_V)=\\
&=\dim V'\geq \sum_{j=1}^\ell \dim U_{(j)}=
\sum_{j=1}^\ell k_j\cdot \dim U_j.
\end{align*}

Assume now that the summands $V_\alpha$ are not symplectic.
Then $V'$ contains $\bigoplus_{j=1}^\ell (U_{(j)}\oplus U^*_{(j)})$,
and so
\begin{align*}
g & =\frac12\dim V=
\frac{1}{2}\sum_\alpha\Prod(N_V)
\geq \frac{1}{2}\sum_\alpha \Sum(N_V)=\\
&=\frac{1}{2}\dim V'\geq \frac{1}{2}\sum_{j=1}^\ell \dim (U_{(j)}\oplus U^*_{(j)})=
\sum_{j=1}^\ell k_j\cdot \dim U_j.
\end{align*}

Thus the claim is proven.
\end{proof}

\subsection{Special subvarieties from efficient representations}
Now we have to deal with efficient representations. To that end we begin with the following numerical lemma.
\begin{lm}\label{lm:N}
A finite collection $N$ of integers larger than $1$
is efficient if and only if it belongs to the following list:
   \begin{enumerate}
       \item[(i)] $\{b\}$;
       \item[(ii)] $\{2,b\}$;
       \item[(iii)] $\{3,b\}$ with $3\leq b\leq 5$;
    \item[(iv)] $\{2,2,b\}$ with $2\leq b\leq 3$.
   \end{enumerate}
\end{lm}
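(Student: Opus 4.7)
The proof is a direct case analysis on $k := |N|$, using the inequality $\Prod(N) \leq 2\Sum(N)$. Writing the elements of $N$ in non-decreasing order as $n_1 \leq n_2 \leq \cdots \leq n_k$, with all $n_i \geq 2$, I would handle small $k$ by hand and then give a uniform argument ruling out $k \geq 5$.

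For $k = 1$ the inequality $n_1 \leq 2 n_1$ is trivial, yielding (i). For $k = 2$, setting $a = n_1, b = n_2$, the condition $ab \leq 2(a+b)$ rewrites as $(a-2)(b-2) \leq 4$. Splitting on $a \in \{2,3,4\}$ (with $a \geq 5$ giving $(a-2)(b-2) \geq 9 > 4$) produces exactly the families (ii), (iii), (iv). For $k = 3$ with $a \leq b \leq c$, if $a \geq 3$ then $\Prod(N) \geq 9c \geq 3(a+b+c) > 2(a+b+c)$, a contradiction; hence $a = 2$. The condition then becomes $2bc \leq 2(2+b+c)$, i.e.\ $(b-1)(c-1) \leq 3$. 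If $b \geq 3$ this forces $c \leq 2 < b$, impossible; so $b = 2$ and $c \leq 4$, giving (v).

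For $k = 4$ with $a \leq b \leq c \leq d$: if $c \geq 3$, then $abcd \geq 4 \cdot 3 \cdot d = 12d > 2(a+b+c+d)$ (since $a+b+c+d \leq 4d$, so $2(a+b+c+d) \leq 8d$), a contradiction. Thus $a=b=c=2$, and $8d \leq 2(6+d)$ forces $d = 2$, yielding (vi). For the uniform step $k \geq 5$, I would observe that $\Prod(N) \geq 2^{k-1} n_k$ (all but the largest factor being at least $2$) while $\Sum(N) \leq k n_k$, so
\[
\frac{\Prod(N)}{\Sum(N)} \geq \frac{2^{k-1}}{k}.
\]
Since $2^{k-1} > 2k$ for every $k \geq 5$ (verified at $k=5$ and propagated by doubling), the non-decouplability condition $\Prod(N) \leq 2\Sum(N)$ fails.

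No step is particularly subtle; the only mild annoyance is making sure the bookkeeping in the $k=2$ and $k=3$ subcases is exhaustive. The cleanest way to present this is probably to unify cases by factoring the inequality (as $(a-2)(b-2) \leq 4$ and $(b-1)(c-1) \leq 3$ above) so that the enumeration is immediate, and then give the one-line exponential bound for $k \geq 5$. Putting the six cases together yields precisely the list (i)--(vi), with converses in each case being trivial direct checks.
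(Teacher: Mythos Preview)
Your proof is correct and follows essentially the same approach as the paper: a case analysis on $|N|$, with the same factoring $(a-2)(b-2)\le 4$ for $|N|=2$. The only cosmetic difference is that for $|N|\ge 3$ the paper argues incrementally (starting from the all-$2$'s tuple and tracking how $\Prod$ and $2\Sum$ change when an entry is increased), whereas you use direct inequalities and the factoring $(b-1)(c-1)\le 3$ for $|N|=3$ and the bound $\Prod(N)/\Sum(N)\ge 2^{k-1}/k$ for $|N|\ge 5$; both routes are equally elementary.
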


\begin{proof}
First, if $|N|\ge 4$, then $N$ is inefficient.
Indeed, for $N=\{2,\dots,2\}$ we have $\Prod(N)=2^{|N|}\geq 4|N|= 2\cdot\Sum(N)$, and increasing each number in $N$ by one
results in increasing $2\cdot\Sum$ by $2$ and $\Prod$ by at least $2^{|N|-1}$.

Next, if $|N|=3$, then $N=\{2,2,2\}$ is efficient  since $\Prod(N)=8<12=2\cdot\Sum(N)$.
    Now, increasing a number in $N$ by one results in increasing 
$2\cdot\Sum$ by $2$ and $\Prod$ by at least $4$; so, in order to find efficient sets of three elements, we can only do it at most once. This shows that $\{2,2,2\}, \{2,2,3\}$  are the only efficient sets of 3 elements, since $\{2,3,3\}, \{2,2,4\}$ are inefficient by inspection.

Next, suppose that $N=\{a,b\}$, with $a\le b$. Then $N$ is efficient if and only if $ab < 2a+2b$, which is equivalent to $(a-2)(b-2)< 4$.  So either $2=a\le b$, or $3=a\le b<6$.
    
Finally, if $|N|=1$, then it is certainly efficient.
\end{proof}

Now we show that efficient representations are never associated to compact special subvarieties of 
$\cA_{g}$
of maximal dimension.

\begin{lm}\label{lm:non-decouplable}
Let $(\rG,\dX)\hra \SD_g$ is an indecomposable embedding of Shimura data, and assume that the associated 
$\GSp$-irreducible representation $V$ is not decoupled.
If $N$ is efficient, then
the pair $(d,g)$ associated to the representation $V$ of $\rG$ satisfies $d<\dmax(g)$.
\end{lm}

In the proof, we will often use the following observation.

\begin{rem}\label{rem:product}
Let $U_i$ be a complex irreducible linear representation
    of the complex algebraic group $G'_i$ for $i=1,2$.
Then $U_1\otimes_\CC U_2$ is a self-dual $(G'_1\times G'_2)$-representation if and only if $U_i$ is a self-dual $G'_i$-representation for $i=1,2$.
Moreover, $U_1\otimes_\CC U_2$ is symplectic if and only if exactly one of $\{U_1,U_2\}$ is symplectic, and the other is orthogonal.
\end{rem}

\begin{proof}[Proof of \Cref{lm:non-decouplable}]
Since $N=\{b\}$ corresponds to a decoupled representation,
we proceed by separately analyzing each efficient case
with $N\neq \{b\}$.\\

{\it{Cases  $N=\{2,2,2\}$ and $N=\{2,2\}$.}}

This can only occur if all factors $\wt{\rG}_i$ are of type A$_1$.
Hence, the dimension $d=d(\rG)$ of the Shimura datum is exactly the number of non-compact factors in
$\wt{\rG}(\RR)$.
On the other hand, each linear summand $V_\alpha$ is nontrivially acted on
by at most one non-compact factor of 
$\wt{\rG}(\RR)$
by \Cref{lm:10.7}.
It follows that there must be at least $d$ such linear summands, each one of dimension $2^{|N|}$.
Hence, $\dim V\geq 2^{|N|}d$ and so $g\geq 2^{|N|-1}d\geq d+1$.

We will show now by contradiction that the case $d=g-1$ does not occur, and so this case is negligible.

Indeed, if $d=g-1$, we must have $N=\{2,2\}$ and $(d,g)=(1,2)$. So $\wt{\rG}_\RR$ is a product of the unit quaternions $\mathrm{Nm}_1(\HH_\RR)$ and $\SL_{2,\RR}$. However the only nontrivial 4-dimensional real representation of $\mathrm{Nm}_1(\HH_\RR)$ is on $\HH_\RR$ by left-multiplication, and its centralizer is isomorphic to itself. Thus $\SL_{2,\RR}$ must act trivially, which is a contradiction.\\

{\it{Case $N=\{2,2,3\}$ or $N=\{3,b\}$ with $3\leq b\leq 5$ .}}

In this case, we have $\dim V_{\alpha,i}=3$ for some $\alpha,i$.
Inspecting \Cref{table:milneslist}, we see that
$\rG_i$ must be isogenous to $\SU_{1,2}$
and $V_{\alpha,i}$ must be the standard representation or its dual.
In any case,
$V_{\alpha,i}$ is not self-dual, and so $V_{\alpha}$ is not symplectic
by \Cref{rem:product}. 
Thus $d<\dmax(g)$ by \Cref{lm:inefficient}.\\

{\it{Case $N=\{2,b\}$ with $b\ge 3$}.}

In this case $\wt{\rG}=\wt{\rG}_{(1)}\times \wt{\rG}_{(2)}$, where $\wt{\rG}_{(1)}$ is $\QQ$-simple of type A$_1$ and $\wt{\rG}_{(2)}$ is $\QQ$-simple of type different from A$_1$.

If $V_{\alpha}$ is  not symplectic, we are done by \Cref{lm:inefficient}. 

So now we assume that $V_\alpha$ is symplectic. In this case, since the factors of $V_\alpha$ of dimension $2$
are symplectic,
by \Cref{rem:product}
the factors of $V_\alpha$ of dimension $b$
must be orthogonal.

We denote by $\ell_1$ the number of non-compact factors of 
$\wt{\rG}_{(1)}(\RR)$
and by $\ell_2$ the number of factors in $\wt{\rG}_{(2),\RR}$. Then $\wt{\rG}_\RR$ has at least $\ell_1+\ell_2\geq 2$ factors, so we have $g\geq b(\ell_1+\ell_2)$ and $d=\ell_1+\sum_{i=1}^{\ell_2}d(\wt{\rG}_{(2),i})$, where we 
recall that
$d(\wt{\rG}_{(2),i})=0$ if
$\wt{\rG}_{(2),i}(\RR)$ 
is compact.

We claim that $d(\wt{\rG}_{(2),i})\leq \dmax(b)$ for every non-compact 
$\wt{\rG}_{(2),i}(\RR)$.
As a consequence, by 
\Cref{lm:dmax} and the fact that
$b\geq 3$, we obtain $d\leq \ell_1+\ell_2 \cdot\dmax(b)<
\dmax(b\ell_1+b\ell_2)=\dmax(g)$, thus concluding the proof of the lemma.

To prove the claim note that, by 
\Cref{table:milneslist},
we always have $d(\wt{\rG}_{(2),i})<b$, except 
in the cases of $\wt{\rG}_{(2),i}$ isogenous to $\SO^*_{2r}$ with
$r\geq 5$.

Finally, we separately analize the case of $\wt{\rG}_{(2)}$
of type (II)$_r$.
We get that $d\leq \ell_1+\frac{r(r-1)}{2}\cdot \ell_2$ and $g\geq 2r(\ell_1+\ell_2)$. If $(d,g)$ is not negligible, we obtain
\begin{align*}
    \dmax(g) & \geq \frac{1}{4}\Big(r(\ell_1+\ell_2)\Big)^2-\frac14\geq \frac12\cdot r^2\cdot (\ell_1+\ell_2)-\frac14=\\
    & =d+\frac{r\ell_2+(r^2-2)\ell_1-\frac12}{2}>d+1
\end{align*}
as desired.
\end{proof}

\subsection{Compact special subvarieties of maximal dimension}

Now we can prove our second main result.

\begin{proof}[Proof of \Cref{thm:dmcAg}]
By \Cref{lm:inefficient} and \Cref{lm:non-decouplable} 
a compact subvariety of 
$\cA_{g}$
of dimension $\dmax(g)$ must be
either a Hodge-generic subvariety, or a compact special subvariety associated to
a decoupled representation, or a product of the two types.

The conclusion is then a consequence of \Cref{thm:dmcAg2}
and of \Cref{rem:unique}.
\end{proof}

The proofs of 
\Cref{lm:key-estimate} and \Cref{prop:estimate}
and \Cref{rem:unique} 
in fact yield more information than \Cref{thm:dmcAg}, allowing us to describe all maximal-dimensional compact subvarieties of
$\cA_{g}$,
not just determining their dimensions.
We collect such information in the following statement.

\begin{thm}\label{thm:maxvar}
All maximal-dimensional compact subvarieties of 
$\cA_{g}$,
in each genus $g$, are described as follows:
\begin{itemize}
\item[(o)]
for $g=1$, a maximal-dimensional compact subvariety of 
$\cA_{1}$
is a point;
    \item[(i)]
    for $g=2$, the maximal-dimensional compact subvarieties of 
    $\cA_{2}$
    are either Hodge-generic curves (for example, components of very general complete intersections), or Shimura curves (see \Cref{ssc:SL2});
    \item[(ii)]
    for $3\le g\leq 15$, all maximal-dimensional compact subvarieties of 
    $\cA_{g}$
    must be Hodge-generic: for example, (components of) very general complete intersections;
    \item[(iii)]
    for $g\geq 16$ even, all maximal-dimensional compact subvarieties of 
    $\cA_{g}$
    are 
    compact special subvarieties corresponding to the (I)$_{g/2}$ case of \Cref{table:Satakelist}
    of the type constructed in 
    \Cref{sec:best} (see also \Cref{prop:estimate});
    \item[(iv)]
    for $g\geq 19$ odd, all maximal-dimensional compact subvarieties of 
    $\cA_{g}$
    are products of a point in
    $\cA_{1}$
    and a special subvariety of 
    $\cA_{g-1}$
    of maximal dimension of the type discussed in (iii), or
    (a component of) a Hecke translate of such product;
    \item[(v)]
    for $g=17$, the maximal-dimensional compact subvarieties of 
    $\cA_{17}$
    are either Hodge-generic (for example, components of very general complete intersections), or the product of a point in 
    $\cA_{1}$
    with a compact 16-dimensional special subvariety of 
    $\cA_{16}$
    of the type discussed in (iii),
    or
    (a component of) a Hecke translate of such product.
\end{itemize}
\end{thm}

\section{The indecomposable locus and the locus of Jacobians}\label{sec:jacobians}

In this section we discuss some consequences of our results, and some open problems concerning the locus of indecomposable ppav 
$\Agind$
and the locus of Jacobians, in particular proving \Cref{cor:dmcMgct}.

We will shortly refer to the image of
the natural morphism 
$\cA_{g'}\times\cA_{g-g'}\ra\cA_{g}$
as to the {\it{locus $\cA_{g'}\times\cA_{g-g'}$ in $\cA_{g}$}}, and similarly for the image
of $\cA^*_{g'}\times\cA^*_{g-g'}\ra\cA^*_{g}$.
We will adopt the same convention for 
similar decomposable loci in 
$\Mgct$,
such as
the image
of $\Mgct[g',1]\times\Mgct[g-g',1]\ra \Mgct[g]$.

\subsection{The locus 
$\Agind$
of indecomposable ppav}\label{rem:Agind}
As mentioned in \Cref{intro-Mg}, it is also very natural to investigate
the moduli space $\Agind$ of indecomposable ppav of dimension $g$, see \cite[\S 1]{kesa} for a further discussion of the motivation.

Thinking of the Satake compactification as a compactification of 
$\Agind\subsetneq\cA_{g}^*$,
we see that the boundary
$\cA_{g}^*\setminus\Agind$ has one irreducible component
of maximal dimension, namely $\cA_{1}^*\times\cA_{g-1}^*$, and it has codimension $g-1$.
Thus $g-2\le \dmcg(\Agind)$, 
while \Cref{thm:dmcgAg} of course implies the following.

\begin{cor}
For $g\geq 2$ the following holds:
\[
g-2\leq \dmcg(
\Agind
)\le g-1.
\]
\end{cor}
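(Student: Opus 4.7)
The plan is to prove both inequalities by combining Theorem~\ref{thm:dmcgAg} with the general compactification/hyperplane argument already recalled in Section~\ref{sec:intro-general}.

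For the upper bound $\dmcg(\Agind)\le g-1$, first I would observe that $\Agind$ is a Zariski-open dense subset of $\cA_g$, since $\cA_g\setminus\Agind=\bigcup_{1\le k\le\lfloor g/2\rfloor}\cA_k\times\cA_{g-k}$ is a finite union of proper closed subvarieties. Consequently, a very general point of $\Agind$ is also a very general point of $\cA_g$, and every compact subvariety of $\Agind$ is in particular a compact subvariety of $\cA_g$. Hence $\dmcg(\Agind)\le\dmcg(\cA_g)=g-1$ by Theorem~\ref{thm:dmcgAg}.

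For the lower bound $\dmcg(\Agind)\ge g-2$, I would apply the hyperplane-section argument of Section~\ref{sec:intro-general} to the projective compactification $\Agind\subset\cA_g^*$. The boundary $\cA_g^*\setminus\Agind$ is covered by the Satake strata $\cA_j^*$ with $j<g$, which have codimension $\ge g$ in $\cA_g^*$, together with the closures of the reducible loci $\cA_k^*\times\cA_{g-k}^*$ for $1\le k\le\lfloor g/2\rfloor$; a direct dimension count gives
\[
\codim_{\cA_g^*}(\cA_k^*\times\cA_{g-k}^*)=\tfrac{g(g+1)}{2}-\tfrac{k(k+1)}{2}-\tfrac{(g-k)(g-k+1)}{2}=k(g-k),
\]
which for $g\ge 2$ is minimised at $k=1$ and equals $g-1$. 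Thus the codimension of the boundary is exactly $d=g-1$, and intersecting $\cA_g^*\subset\PP^N$ (for some projective embedding) with $\dim\cA_g^*-(g-2)$ very general hypersurfaces, the first of which is chosen to pass through a prescribed very general point of $\Agind$, produces a compact subvariety of $\Agind$ of dimension $g-2$ through that point.

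No serious technical obstacle arises in this argument; it is merely a combination of Theorem~\ref{thm:dmcgAg} with the general principle already recalled in Section~\ref{sec:intro-general}. The genuinely hard question, which this argument does not settle, is whether $\dmcg(\Agind)$ actually equals $g-2$ or $g-1$. Pinning down the true value seems to require either an explicit construction of a compact $(g-1)$-fold sitting inside $\Agind$ through a very general point, or a strengthening of the Ax--Schanuel-based argument of Section~\ref{sec:ax-schanuel} that keeps track of the reducible locus as an additional obstruction to compactness.
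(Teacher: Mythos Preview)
Your proposal is correct and follows essentially the same route as the paper: the upper bound comes directly from Theorem~\ref{thm:dmcgAg} via the inclusion $\Agind\subset\cA_g$, and the lower bound comes from the Satake compactification together with the hyperplane-section argument of Section~\ref{sec:intro-general}, once one observes that the maximal-dimensional component of $\cA_g^*\setminus\Agind$ is $\cA_1^*\times\cA_{g-1}^*$, of codimension $g-1$. Your explicit codimension computation $k(g-k)$ for all $\cA_k^*\times\cA_{g-k}^*$ is a nice addition that the paper leaves implicit.
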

It would be very interesting to know which value it in fact is. As we will now see, 
$\dmc(\Agind)=\dmcg(\Agind)=g-2$ 
for $g=2,3,4$, and it is natural to wonder if this is the case for all~$g$.

Indeed, the cases of $g=2,3$ are classical: 
$\Agind[2]\cong\cM_{2}$
is affine,
while $\Agind[3]\cong\cM_{3}$
does not contain a compact surface, for example by \cite{diaz}. 

In general, recall that $\lambda_i$ denotes the $i$'th Chern class of the Hodge rank~$g$ vector bundle on
$\cA_{g}$,
and that the tautological ring
$R^*(\cA_{g})\subseteq CH^*_\QQ(\cA_{g})$
is the subring generated by the classes~$\lambda_i$. 

For $g=4$, from the results of \cite{huto2,hutoTaibi} it follows that the classes of all algebraic subvarieties of 
$\cA_{4}$ lie in $R^*(\cA_{4})$,
and in fact the class of 
$\cA_{1}\times\cA_{3}$
is a nonzero multiple of the class $\lambda_3$. Thus the main result of \cite{kesa} shows that any compact subvariety~$Z$ of 
$\cA_{4}\setminus(\cA_{1}\times\cA_{3})$
must satisfy $\dim Z\le 3\cdot 2/2-1=2$,  since $\lambda_3|_Z=0$. This implies 
$\dmc(\Agind[4])=2$.

However, the situation in higher genus remains mysterious. By \cite{vdgcycles}, for any~$g$ the homology class of $[E]\times
\cA_{g-1}
$ is a multiple of the top Hodge class $\lambda_g$, which in fact vanishes on
$\cA_{g}$.
As detailed in \cite{cmop}, it is possible to define a projection map 
$CH^*_\QQ(\cA_{g})\to R^*(\cA_{g})$.
Then in \cite{coptalk} Canning, Oprea, Pandharipande show that, in general,
the projection of the class of 
$\cA_{1}\times\cA_{g-1}$ to the tautological ring of $\cA_{g}$
is a nonzero multiple of~$\lambda_{g-1}$, but also that for $g=6$ the class of 
$\cA_{1}\times\cA_{5}$
does not lie in $R^*(\cA_{6})$.
In \cite{iribarlopez} Iribar Lopez shows that such a locus of products is not tautological for $g=12$, or even $g\ge 16$, either. This makes the above argument for $g=4$ fail in higher genus.

\subsection{The locus of Jacobians, and the moduli space of curves}
In order to discuss 
$\dmc(\Mgct)$,
we begin by recalling the construction of a compact subvariety contained in its boundary 
$\partial\Mgct=\Mgct\setminus\cM_{g}$,
already pointed out by the first author for~\cite{krcomplete}.

\begin{lm}\label{lm:compact-Mgct}
    For $g\geq 2$ there exists a compact subvariety of 
    $\partial\Mgct[g]$ 
    of dimension $\lfloor 3g/2\rfloor -2$, whose image in 
    $\cA_{g}$
    under $\cJ$ has dimension $\lfloor g/2\rfloor$.
Moreover there exists a compact subvariety of 
$\cJ(\Mgct[g])$
of dimension $\lfloor 2g/3\rfloor$.
\end{lm}
\begin{proof}
We first note that 
$\cJ:\Mgct[2]\xrightarrow{\sim}\cA_{2}$
contains a compact curve, while 
the Torelli map 
$\cJ:\Mgct[3]\ra\cA_{3}$
is proper and surjective, 
and thus 
$\Mgct[3]$
contains a compact surface that is a preimage under~$\cJ$ of a compact surface in 
$\cA_{3}$.

Since the forgetful map 
$\Mgct[g,n]\to\Mgct$
is proper (the fiber over $[C]$ being a blowup of the $n$'th Cartesian power $C^{\times n}$), taking preimages of a compact curve in 
$\Mgct[2]$
and of a compact surface in 
$\Mgct[3]$,
respectively, gives a compact surface in 
$\Mgct[2,1]$,
a compact threefold in 
$\Mgct[2,2]$,
and a compact threefold in
$\Mgct[3,1]$.

Now, for even~$g=2n\geq 4$, inside the boundary stratum
$$
\Mgct[2,1]\times\Mgct[2,2]\times\dots \times\Mgct[2,2]\times\Mgct[2,1] 
$$
of $\Mgct$,
we consider the product of two compact surfaces in 
$\Mgct[2,1]$,
and $n-2$ compact threefolds in 
$\Mgct[2,2]$,
giving altogether a product variety of dimension $2\cdot 2+3\cdot (n-2)=\tfrac{3g}{2}-2$. The image of this subvariety under $\cJ$ is the product of $n$ copies of the $\cJ$ image in 
$\cA_{2}$ of the compact curve in $\Mgct[2]$,
and so has dimension~$\frac{g}{2}$.

For $g=2n+1\geq 5$ odd, we do the same except taking the last factor to be a compact threefold in
$\Mgct[3,1]$.

As for the last claim, we argue in a very similar way, the cases $g=2,3$ having already been treated above.
If $g=3n+1\geq 4$, then the locus
$\Mgct[3,1]\times (\Mgct[3,2])^{n-1}\times\Mgct[1,1]$
inside $\Mgct$ is mapped by
$\cJ$ onto $(\cA_{3})^n\times\cA_{1}$,
which
contains a compact subvariety of dimension $2n$.
If $g=3n+2\geq 5$, then the locus
$\Mgct[2,1]\times\Mgct[3,1]\times(\Mgct[3,2])^{n-1}$
of $\Mgct[g]$ maps onto $\cA_{2}\times(\cA_{3})^n$,
which
contains a compact subvariety of dimension $1+2n$.
If $g=3n\geq 6$, then via $\cJ$
the locus 
$(\Mgct[3,1])^2\times (\Mgct[3,2])^{n-2}$
of $\Mgct[g]$ maps onto $(\cA_{3})^n$,
which contains
a compact subvariety of dimension $2n$.
\end{proof}

We note that all subvarieties constructed 
to prove the last claim of \Cref{lm:compact-Mgct} are compact subvarieties of 
$\Mgct$
of dimension $4g/3$ plus a constant, mapping to loci in 
$J(\Mgct)$
of dimension $2g/3$ plus a constant.\\

We now prove the results on compact subvarieties of
$\Mgct$ and $\cJ(\Mgct)$.

\begin{proof}[Proof of \Cref{cor:dmcMgct}]
As already mentioned in the introduction, the upper bound $\dmc(\cJ(\Mgct))\le g-1$ for $g\le 15$ simply follows from the inclusion 
$\cJ(\Mgct)\subseteq\cA_{g}$
and the $g\le 15$ case of \Cref{thm:dmcAg}, giving 
$\dmc(\cA_{g})=\dmcg(\cA_{g})=g-1$
for this range of~$g$. 

To determine 
$\dmc(\Mgct)$
for $2\le g\le 23$, 
we first observe that 
$\dmc(\Mgct)
\geq \lfloor \frac{3g}{2}\rfloor-2$ by \Cref{lm:compact-Mgct}.
Thus we only need to prove
that $\dmc(\Mgct)
\leq\lfloor\tfrac{3g}2\rfloor-2$ for $2\le g\le 23$.

Since 
$\Mgct[2]$ and $\Mgct[3]$
contain a compact curve and a compact surface, respectively, the statement of the corollary is true for $g=2$ and $g=3$. 

For $4\le g\le 23$, we proceed by induction, assuming the result for all $g'<g$. Indeed, by \Cref{thm:dmcAg}, 
exactly for $g$ in this range we have $\dmc(\cA_{g})<\lfloor\tfrac{3g}2\rfloor-2$ 
and so an irreducible compact subvariety $Z\subsetneq
\Mgct$
that satisfies $Z\cap
\cM_{g}
\neq\emptyset$ (and thus maps generically 1-to-1 to its image in
$\cA_{g}$
under $\cJ$) has dimension strictly
smaller than $\lfloor\tfrac{3g}2\rfloor-2$.

Thus it is enough to deal with irreducible compact $Z\subsetneq
\partial\Mgct$. Such~$Z$ must then be contained in some irreducible component 
$\Mgct[g',1]\times \Mgct[g-g',1]$ of the boundary $\partial\Mgct=\Mgct\setminus\cM_{g}$.
Since the forgetful map 
$\Mgct[k,1]\to\Mgct[k]$
has compact curve fibers, we obtain the equality 
$\dmc(\Mgct[k,1])=1+\dmc(\Mgct[k])$.
It then follows from the inductive assumption that
$$
 \dim Z\le \left(\lfloor\tfrac{3g'}2\rfloor-2\right)+1+\left(\lfloor\tfrac{3(g-g')}2\rfloor-2\right)+1\le \lfloor\tfrac{3g}2\rfloor-2\,,
$$
which proves the upper bound $\dmc(\Mgct)\le\lfloor\tfrac{3g}2\rfloor-2$ for $g\le 23$. 

The last claim $\dmc(\cJ(\Mgct))\geq \lfloor 2g/3\rfloor$
for $g\geq 2$ follows from \Cref{lm:compact-Mgct}.
\end{proof}

The compact subvariety constructed above 
is contained in the boundary 
$\cJ(\Mgct\setminus\cM_{g})$,
and we do not know any construction of a higher-dimensional compact $Z\subsetneq \cJ(\Mgct)$ that intersects both
$\cJ(\cM_{g})$ and~$\cJ(\partial\Mgct)$.

\begin{rem} 
We see no reason to expect the bound $\dmc(\cJ(\Mgct))\leq g-1$ for $g\leq 15$ to be sharp.
It would be very interesting to improve it, e.g.~by bounding the dimensions of the intersections of compact Shimura varieties with $\cJ(\Mgct)$, extending the spirit of the Coleman-Oort conjecture
\cite[Section 5]{oort}.
\end{rem}

\smallskip
For easier future reference, and to summarize the current state of the art, in \Cref{table:dimMg} we give the results of \Cref{cor:dmcMgct} for small genera, and summarize all the prior knowledge on compact subvarieties of 
$\cM_{g}$ and $\Mgct$. 

\begin{table}[H]
$
\begin{array}{|r||rrrr||rrrr||rr||r|}
\hline
  g &3&4&5&6&15&16&17&18&23&24&100 \\
  \hline
  \hbox{$\dmcg(\Mgct)\ge$}&2&2&2&2&2&2&2&2&2&2&2\\
  \hbox{$\dmc(\Mgct)=$}&2&4&5&7&20&22&23&25&32&\ge 34&\ge 148\\
\hbox{$\dmc(\cJ(\Mgct))\le$}&2&3&4&5&14&16&16&20&30&36&196\\
\hbox{$\dmc(\cJ(\Mgct))\ge$}
&2&2&3&4&10&10&11&12&15&16&66\\
  \hline
  \hbox{$\dmcg(\cM_{g})\ge$}&1&1&1&1&1&1&1&1&1&1&1\\
  \hbox{\em covers: 
  $\dmc(\cM_{g})\ge$}&1&1&1&1&2&3&3&3&3&3&5\\
  \hbox{\em Diaz: $\dmc(\cM_{g})\le$}&1&2&3&4&13&14&15&16&21&22&98\\
  \hline
\end{array}
$

\medskip
\caption{Known results for maximal dimensions of compact subvarieties of $\Mgct$ and $\cM_{g}$}
\label{table:dimMg}
\end{table}

Here we recall that Keel and Sadun proved $\dmc(\Mgct)\le 2g-4$ for any $g\ge 3$. The lower bounds on 
$\dmcg(\cM_{g})$ and $\dmcg(\Mgct)$
simply follow from considering the Satake compactification 
$\cM_{g}^*$, which is the closure of $\cJ(\cM_{g})$ in~$\cA_{g}^*$, so that $\codim (\cM_{g}^*\setminus \cJ(\cM_{g}))=2$ and $\codim(\cM_{g}^*\setminus \cJ(\Mgct))=3$.

The lower bounds for $\dmc(\cM_{g})$ are obtained by covering constructions starting 
either from a compact curve in 
$\cM_{3}$
or
from  a one-dimensional compact family of pairs of distinct points on a fixed curve of genus $2$.
The best known results are due to Zaal, following ideas
by Gonz\'alez-D\'\i ez and Harvey in \cite[Section 4]{gdha}, and are 
described in detail in Zaal's thesis, where in particular it is shown in \cite[Thm.~2.3]{zaalthesis} that a compact $d$-fold exists in 
$\cM_{g}$
for any $g\ge 2^{d+1}$. Finally, the best known upper bound for $\dmc(\cM_{g})$ is the famous 40 year old theorem of Diaz \cite{diaz} (which by now has multiple proofs): 
$\dmc(\cM_{g})\le g-2$ for any $g\ge 2$.

\providecommand{\bysame}{\leavevmode\hbox to3em{\hrulefill}\thinspace}
\providecommand{\MR}{\relax\ifhmode\unskip\space\fi MR }
\providecommand{\MRhref}[2]{\href{http://www.ams.org/mathscinet-getitem?mr=#1}{#2}
}
\providecommand{\href}[2]{#2}

\end{document}